\documentclass{amsart}
\usepackage[dvipdfmx]{graphicx}
\usepackage{amscd}
\usepackage{amsthm}
\usepackage{amssymb}
\usepackage{bm}
\usepackage{amsmath,amsfonts}
\usepackage{here}
\usepackage{url}
\usepackage{physics}
\usepackage{xcolor}
\usepackage{appendix}
\usepackage[
]{hyperref}
\numberwithin{equation}{section}

\theoremstyle{plain} 
\newtheorem{dfn}{Definition}[section]
\newtheorem{thm}[dfn]{Theorem}
\newtheorem{prop}[dfn]{Proposition}
\newtheorem{lem}[dfn]{Lemma}

\newtheorem*{Tak}{Takens' Last Problem}
\newtheorem{claim}{Claim}
\theoremstyle{remark}
\newtheorem{rem}[dfn]{Remark}

\title{Homoclinic tangency of codimension 2 and historic wandering domains}
\author{KODAI YAMAMOTO}
\date{}

\subjclass[2020]{Primary 37C20, 37C29, 37C70; Secondary 37C25}
\keywords{historic behavior, blender-horseshoe, homoclinic tangency}

\begin{document}

\begin{abstract}
    We construct a four-dimensional diffeomorphism exhibiting a homoclinic tangency of the largest codimension, which admits a historic wandering domain of positive Lebesgue measure.
    Every orbit in this wandering domain exhibits historic behavior, in the sense that time averages do not converge.
    This example shows that homoclinic tangencies of the largest codimension can still give rise to positive Lebesgue measure sets with non-convergent statistical behavior.
\end{abstract}

\maketitle

\section{Introduction}
In dynamical systems that exhibit chaotic behavior, it is often difficult to directly track the long-term evolution of individual trajectories.
It is therefore natural to investigate the system through its statistical properties.
For a dynamical system $f$ on a closed manifold $M$, the \emph{empirical measures} of the orbit of a point $x \in M$ are given by a sequence of measures
\begin{equation}\label{eq1}
    \frac{1}{n+1} \sum_{i=0}^{n} \delta_{f^{i}(x)}, \quad n=1,2,3,\dots,
\end{equation}
which describe the statistical distribution of the orbit up to time $n$.
Here, $\delta_{f^{i}(x)}$ denotes the Dirac measure supported at $f^{i}(x)$.
It is preferable if the sequence \eqref{eq1} converges to an ergodic measure. This is indeed true for Lebesgue almost every point for the cases of volume-preserving map and uniformly hyperbolic systems such as Axiom A systems.
However, it is also known that, for some dynamical systems and for many initial points $x$, the sequences \eqref{eq1} do not converge and exhibit statistically irregular behavior.
Ruelle \cite{Rue2001} referred to such behavior as \emph{historic behavior}.
For instance, Colli and Vargas \cite{CV2001} constructed an example of an affine horseshoe map with homoclinic tangencies in dimension two for which the set of points exhibiting historic behavior has positive Lebesgue measure.
Later, Kiriki, Nakano and Soma \cite{KNS2023} obtained a non-trivial extension of this result to dimension three using a blender-horseshoe with homoclinic tangencies; see also \cite[Remark 1.5]{KNS2023}.

Both constructions are carried out in the presence of a homoclinic tangency, that is, a non-transverse intersection between the stable and unstable manifolds of a hyperbolic set.
In this paper, we focus on the codimension of a homoclinic tangency, and in particular on the case of the largest codimension.
To investigate this problem, we construct an explicit four-dimensional diffeomorphism that has a homoclinic tangency of codimension two and whose set of points exhibiting historic behavior has positive Lebesgue measure.

In the next section, we explicitly construct a diffeomorphism $F \colon \mathbb{R}^{4} \to \mathbb{R}^{4}$ that admits a hyperbolic set $\Lambda$, whose stable and unstable manifolds exhibit a homoclinic tangency of codimension two.
For this diffeomorphism $F$, we establish the following as the main result of this paper.

\begin{thm}\label{main}
    There exist sequences of diffeomorphisms $F_{n}$ and $G_{n}$ converging to $F$ in the $C^{r}$-topology $(1 \leq r < \infty)$ such that the following hold:
    \begin{itemize}
        \item [\rm{(1)}] $F_{n}$ has a historic contracting wandering domain $D$.
        \item [\rm{(2)}] $G_{n}$ has a contracting wandering domain such that the sequence \eqref{eq1} converges to the periodic measure supported in a saddle periodic orbit.
    \end{itemize}
\end{thm}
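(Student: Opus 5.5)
We follow the Colli--Vargas scheme as adapted by Kiriki--Nakano--Soma \cite{KNS2023}, in the four-dimensional setting of the map $F$ built in the next section. Recall the set-up: $F$ is affine near a hyperbolic set $\Lambda$ which is a product of two horseshoes (or a horseshoe times a blender-horseshoe), so that $\Lambda$ has two contracting and two expanding directions. The unstable manifolds of $\Lambda$ return along a fold region and touch the stable manifolds in a homoclinic tangency of codimension two, i.e.\ a quadratic tangency in both expanding directions simultaneously.

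\textbf{Step 1: a sequence of tangencies and a thin box.} The first step is to choose a sequence of points $x_k\in\Lambda$ with itineraries $\underline{w}_k=\underline{v}_1\underline{v}_2\cdots\underline{v}_k$. Each $\underline{v}_j$ is a concatenation of long blocks of two fixed types, for instance staying near two distinct saddle fixed points $p,q$ of $\Lambda$ for times $m_j$ and $n_j$. The block lengths grow super-exponentially, e.g.\ $m_j,n_j\sim j^2$ or faster. We then perturb $F$ by $C^r$-small local translations supported in disjoint neighborhoods of the fold region. The aim is that $F_n$ has a homoclinic tangency of codimension two associated with each $x_k$, for $k\ge n$, all of them in the same "chain".

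In coordinates, one period of the combinatorics is the affine passage near $\Lambda$, with eigenvalues $\lambda_1,\lambda_2<1<\sigma_1,\sigma_2$, composed with the quadratic fold. This gives a renormalized map on a small box $B_k$. It has the form
\[
(x_1,x_2,y_1,y_2)\mapsto \bigl(\mu_k + a y_1^2 + \cdots,\ \nu_k + b y_2^2+\cdots,\ \lambda\text{-small terms}\bigr).
\]
The perturbation sizes needed to position the tangency are of order $\sigma^{-|\underline{v}_k|}$ times the box diameter. A Palis--Takens/Kiriki--Soma renormalization estimate then shows that $\sum_k$ of the $C^r$ norms tends to $0$ as $n\to\infty$. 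This yields $F_n\to F$. Finding the right growth rate of the blocks is the main obstacle. The codimension-two tangency must be controlled in both unstable directions at once, so the perturbation has two parameters per step. The contraction in the product box must dominate the $\sigma_1\sigma_2$ expansion while the $C^r$ size still decays. We would first try the Colli--Vargas choice of block lengths and check the $C^r$ estimate coordinatewise, using the product structure, possibly with a blender to handle a mismatch of eigenvalues.

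\textbf{Step 2: the wandering domain.} With the translations chosen, the map sends $B_k$ into $B_{k+1}$: the image of $B_k$ under $F_n^{|\underline{v}_{k+1}|+N}$ lies in the interior of $B_{k+1}$, where $N$ is the transit time along the tangency. The boxes are pairwise disjoint. Taking $D=\operatorname{int}B_n$ gives a contracting wandering domain: $\operatorname{diam}F_n^i(D)\to0$ and $D$ has positive Lebesgue measure.

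\textbf{Step 3: historic behavior.} Every orbit from $D$ shadows the itinerary $\underline{v}_{n+1}\underline{v}_{n+2}\cdots$ up to uniformly small error. Since $m_j,n_j$ grow fast, the empirical measures along times at the end of the $p$-blocks accumulate near $\delta_p$ plus a fixed fraction. At the end of the $q$-blocks they accumulate near a measure weighted toward $\delta_q$. Hence there are two distinct limit points and the sequence \eqref{eq1} diverges. This proves (1).

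\textbf{Step 4: convergence for $G_n$.} For (2) we repeat the construction with all blocks of a single type: $\underline{v}_j=p^{m_j}$ with $m_j\to\infty$ and bounded excursions. The empirical measures then converge to $\delta_p$, which gives $G_n$ with a contracting wandering domain whose statistics converge to the periodic measure at the saddle $p$.
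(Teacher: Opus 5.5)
Your outline has the right overall shape (Colli--Vargas/KNS boxes, translations near the fold, block codes for the statistics), but there is a genuine gap. The step that carries all the weight in Step 1 is asserted rather than proved, and it is exactly where the codimension-two difficulty sits.

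\textbf{The missing key lemma.} The translation needed at stage $k$ is the offset between the fold image of the center of $\mathbb{B}_*(\underline{\gamma}^{(k)})$, which is the center of the $s$-box of $[\underline{\gamma}^{(k)}]^{-1}$, and the center of the next $u$-box. For itineraries chosen as you describe, this offset is not small, and nothing makes it of order $\sigma^{-|\underline{v}_k|}$ times a box diameter. Moreover, a bump translation of size $t$ supported on a set of width $\ell$ has $C^r$ norm of order $t/\ell^{r}$. Here $\ell$ is limited by the separation of the $s$-boxes used at consecutive stages, so you need a quantitative gap between $t$ and $\ell^r$.

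What you need is the analogue of the Colli--Vargas key lemma, which the paper proves as Theorem \ref{main2} via Lemma \ref{ll} and Proposition \ref{critical}. It provides connecting codes with $|\underline{\alpha}^{(k)}|+|\underline{\omega}^{(k+1)}|<CLk$, valid for every free middle code $\underline{u}^{(k)}$, such that the required translation is below $\lambda_s^{L(k+1)}$. At the same time, consecutive $\underline{\omega}^{(k)}$ branch at positions $n_k$ with $n_{k+1}<n_k+N_s$, where $N_s$ is independent of $L$. Then the supports have width $\gtrsim\lambda_s^{N_sk}$, and $\sum_k\lambda_s^{L(k+1)}\lambda_s^{-rN_sk}\to0$ as $L\to\infty$; this is how the paper gets maps converging to $F$.

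Proving this requires that the planar $u$-boxes and the images of the planar $s$-boxes keep crossing at every scale after all accumulated translations, i.e.\ one $\Delta$ for which every pair is linked. The paper gets this from the overlap conditions \eqref{con_cover1} and \eqref{con_cover2}: $u$-boxes are thin in $y_u$ with overlapping $x_u$-projections, and $s$-boxes are thin in the first coordinate with overlapping projections in the second. These feed an Asaoka-type refinement of linked pairs. That, not a ``mismatch of eigenvalues'', is what the blender is for. For a plain product of horseshoes with thin Cantor sets, nothing guarantees such a chain exists.

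\textbf{The growth rates.} Independently, the growth rates you propose are incompatible with Steps 2 and 3. If $p$- and $q$-blocks simply alternate with lengths $\sim j^2$, the last block is negligible against the elapsed time $\sim j^3$. The empirical measures then converge, to about $\tfrac12(\delta_p+\delta_q)$, and there is no historic behavior.

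If instead the lengths grow super-exponentially, the nesting in Step 2 breaks. Write $\Lambda_k$ for the expansion along $\underline{\gamma}^{(k)}$. The width $b_{k+1}$ must satisfy $b_{k+1}\lesssim\Lambda_{k+1}^{-1}$ to fit in the next $u$-box. The stable spread $\lambda_s^{\widehat{n}_k}x_k^*$ enters the $x_u$-coordinate through $\lambda_*^{-1}x_s$, with $x_k^*\gtrsim\Lambda_{k-1}b_{k-1}$, and it must stay below $b_{k+1}$. Iterating gives $\lambda_{cu1}^{-\widehat{n}_{k+1}}\gtrsim C^{-k}\lambda_s^{\widehat{n}_k+\widehat{n}_{k-2}+\cdots}$, which fails once $\widehat{n}_{k+1}/\widehat{n}_k\to\infty$. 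This is why Lemma \ref{con_pro1} needs $\widehat{n}_{k+1}<(1+\eta)\widehat{n}_k$ (Lemma \ref{lem4_1}).

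The paper reconciles the two requirements in three moves:
\begin{itemize}
\item it takes $|\underline{u}^{(k)}|=k^2$, together with the majority condition \eqref{maj};
\item it imposes the Era condition: throughout era $s$, the proportion of $0$'s in each $\underline{u}^{(k)}$ is held at $3/4$ or $7/8$ according to the parity of $s$;
\item each era is longer than $s$ times everything before it, so odd and even eras produce different accumulation points.
\end{itemize}
In (2) the excursions are not bounded either, since they have length $O(Lk)$, but this is $o(k^2)$ and harmless.
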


Since all existing results treat only homoclinic tangencies of codimension one, the current progress on Takens' last problem does not yet extend to tangencies of higher codimension.
To make this distinction precise, we now introduce the notion of the codimension of a homoclinic tangency.
For a homoclinic tangency point $q$, \emph{the codimension of tangency} $c_{T}(q)$ is defined by
\begin{equation*}
  c_{T}(q) = \dim T_{q}M - \dim (T_{q}W^{s}(p) + T_{q}W^{u}(p)).
\end{equation*}
We say that the tangency is \emph{large} if $c_{T}(q) \geq 2$, and of \emph{the largest codimension} if $c_{T}(q) = \lfloor \dim M/2 \rfloor$ where $\lfloor x \rfloor$ is the maximal integer not greater than $x$.

Research on homoclinic tangencies of high codimension was first developed by Barrientos and Raibekas \cite{BR2017}. They showed that there exists a diffeomorphism exhibiting a $C^{2}$-robust homoclinic tangency of large (but not the largest) codimension.
More recently, Asaoka \cite{Asa2022} obtained corresponding $C^{2}$-robust results in the case of the largest codimension.
These results suggest that the arguments developed in this paper may extend to much more general settings.
In the present paper, we restrict ourselves to constructing a concrete example, leaving a broader investigation of these questions for future work.

\subsection{Preliminaries}
We begin by recalling some basic definitions on dynamical systems that will be used throughout this paper. For further details, the reader is referred to the textbooks by Wen \cite{LW2016} and Walters \cite{Wal}.

\subsubsection{Hyperbolic set and homoclinic points}
Let $M$ be a closed $C^\infty$-Riemannian manifold and $f \colon M \to M$ a $C^{r}$-diffeomorphism with $r \geq 1$. We define \emph{the stable set} of a point $x \in M$ by
\begin{equation*}
  W^{s}(x,f) = \qty{y \in M \mid \lim_{n \to \infty}d(f^{n}(x), f^{n}(y)) = 0},
\end{equation*}
where $d(\cdot , \cdot)$ denotes the Riemannian distance on $M$. Similarly, \emph{the unstable set} of $x$ is defined by $W^{u}(x,f) = W^{s}(x,f^{-1})$.
We say that a compact $f$-invariant set $\Lambda \subset M$ is \emph{hyperbolic} if for every $x \in \Lambda$ the tangent space $T_{x}M$ admits a continuous invariant splitting
\begin{align*}
    T_{x}M  = E^{s}(x)\oplus E^{u}(x)
\end{align*}
and there exist constants $C>0$ and $0 < \lambda < 1$ such that
\begin{align*}
  \|Df^{n}(x)v\| \leq C\lambda^{n}\|v\|\quad \text{for } v \in E^{s}(x), \ n \geq 0, \\
  \|Df^{-n}(x)v\| \leq C\lambda^{n}\|v\|\quad \text{for } v \in E^{u}(x), \ n \geq 0.
\end{align*}
The stable set $W^{s}(p,f)$ of a point $p$ in a hyperbolic invariant set $\Lambda$ is a $C^{r}$-injectively immersed submanifold of $M$.
We say $q \in M$ is a \emph{homoclinic point} of $p$ if
\begin{equation*}
    q \in W^{s}(p,f) \cap W^{u}(p,f) \setminus\{p\}.
\end{equation*}
A homoclinic point $q$ of $p$ is called \emph{transverse} if
\begin{equation*}
  T_{q}W^{s}(p,f) \oplus T_{q}W^{u}(p,f) = T_{q}M;
\end{equation*}
otherwise, it is called a \emph{homoclinic tangency point}.

\subsubsection{Statistical properties}
Let $\mathcal{P}(M)$ be the set of all Borel probability measures on $M$.
A sequence of probability measures $\{\mu_{n}\} \subset \mathcal{P}(M)$ is said to converge to $\mu$ in the \emph{weak $\ast$-topology} if for every continuous function $\varphi \colon M \to \mathbb{R}$,
\begin{equation*}
  \lim_{n \to \infty}\int \varphi d\mu_{n} = \int \varphi d\mu.
\end{equation*}
Since $M$ is a compact metric space, $\mathcal{P}(M)$ is also a compact metric space with respect to the weak $\ast$-topology.
We say that a point $x \in M$ or the forward orbit $\{f^{i}(x) \mid i \geq 0\}$ of $x$ has \emph{historic behavior} if \eqref{eq1} does not converge as $n \to \infty$ in the weak $\ast$-topology.

A \emph{non-trivial wandering domain} for $f$ is a connected open set $D \subset M$ satisfying the following conditions:
\begin{itemize}
    \item [(1)] $f^{i}(D) \cap f^{j}(D) = \emptyset$ for any integers $i,j \geq 0$ with $i \neq j$,
    \item [(2)] the union of $\omega$-limit sets  of all $x \in D$, $\cup_{x \in D} \omega(x,f)$, is not equal to a single periodic orbit.
\end{itemize}
A non-trivial wandering domain $D$ is said to be \emph{contracting} if the diameter of $f^{n}(D)$ converges to zero as $n \to \infty$.
We say that $f$ has a \emph{historic wandering domain} $D$ if every point $x$ of a wandering domain $D$ has historic behavior.

\subsection{Related works}
As part of an effort to develop a unified understanding of dynamical historicity, Takens posed the following open problem, now known as Takens' last problem.

\begin{Tak}[\cite{Tak2008}]
  Whether there are persistent classes of smooth dynamical systems such that the set of initial states which give rise to orbits with historic behavior has positive Lebesgue measure?
\end{Tak}

An affirmative answer to Takens' last problem has been obtained by means of homoclinic tangencies, which constitute a typical example of non-hyperbolic phenomena.
Let $\text{Diff}^{r}(M)$ denote the space of $C^{r}$-diffeomorphisms on a surface $M$. Newhouse \cite{New1979} showed that for any $f \in \text{Diff}^{2}(M)$ having a homoclinic tangency associated with a saddle periodic point, there exists an open set $\mathcal{N} \subset \text{Diff}^{2}(M)$ whose closure $\overline{\mathcal{N}}$ contains $f$ and such that every $g \in \mathcal{N}$ has a $C^{2}$-robust homoclinic tangency. Such an open set $\mathcal{N}$ is called a Newhouse domain.
On the other hand, in dimensions three and higher, Bonatti and Díaz \cite{BD1996} introduced blenders as a mechanism for creating robust non-hyperbolic phenomena in higher-dimensional dynamics. Using a special class of blenders, called blender-horseshoes, they showed that there exists an open set in $\text{Diff}^{1}(M)$ such that every diffeomorphism in this open set has a $C^{1}$ robust homoclinic tangency \cite{BD2012}.

Colli and Vargas \cite{CV2001} constructed a horseshoe map with a homoclinic tangency that has a historic contracting wandering domain.
Using the ideas of Colli and Vargas, Kiriki and Soma \cite{KS2017} showed that any Newhouse domain of $\text{Diff}^{r}(M)$ with $\dim M =2$ and $2 \leq r < \infty$, which contains a dense subset whose elements all have a historic contracting wandering domain.
This provides the first affirmative answer to Takens's last problem in the sense of a dense set.
More recently, this result has been extended to the $C^{\infty}$ and analytic categories by Berger and Biebler \cite{BB2023}.

Although the result of Kiriki and Soma \cite{KS2017} is based on Newhouse's result, the diffeomorphisms obtained there are not necessarily contained in an open neighborhood of the Colli--Vargas model $f \in \text{Diff}^{r}(M)$ with $\dim M =2$ and $2 \leq r < \infty$.
Recently, by using the pluripotent property---a notion introduced by Kiriki, Nakano and Soma \cite{KNS2024}, allowing statistical approximation of arbitrary prescribed orbit dynamics---Kiriki et al. \cite{KLNSV2025} proved that there exists a $C^{r}$-open neighborhood of the Colli–Vargas model $f \in \text{Diff}^{r}(M)$ with $\dim M =2$ and $2 \leq r < \infty$, which contains a dense subset whose elements all have a historic contracting wandering domain.
In higher dimensions $\dim M \geq 3$, Barrientos \cite{Bar2022} obtained an affirmative answer to Takens' last problem by using the result of \cite{KS2017} which reduces high-dimensional dynamics to an appropriate two-dimensional setting.
On the other hand, by employing a different method based on the pluripotent property, Kiriki, Nakano, and Soma \cite{KNS2024} showed that there exists a $C^{r}$-open neighborhood $(2 \leq r < \infty)$ of the Kiriki--Nakano--Soma model \cite{KNS2023}, contains a dense subset whose elements all have a historic contracting wandering domain.
Consequently, an affirmative answer to Takens' last problem has also been obtained for $\dim M \geq 3$.

\subsection{Plan of this paper}
We outline the structure of this paper.
In Section \ref{model}, we describe the construction of the diffeomorphism in Theorem \ref{main}.
This diffeomorphism has a homoclinic tangency of codimension two.
In Section \ref{cc}, we construct a return map near the homoclinic tangency in order to obtain the result of Theorem \ref{main}.
To ensure that two rectangles have a nonempty intersection, we follow the argument in \cite{Asa2022}.
In Section \ref{per}, we construct a sufficiently small perturbation of the map near the homoclinic tangency so as to connect the orbit returning to the tangency.
Through this perturbation, we obtain a contracting wandering domain that either exhibits historic behavior or has empirical measures converging to the periodic measure supported on a saddle periodic orbit, as stated in Theorem \ref{main}.

\section{Construction of the Map}\label{model}
In this section, we construct the diffeomorphism $F$ in Theorem \ref{main}. The map $F$ is defined so that it admits a blender-horseshoe and, moreover, that the blender-horseshoe has a homoclinic tangency of codimension two.

First, we define a two-dimensional dynamical system on a rectangle $B = [-2,2]^{2}$.
Let $1 < \lambda_{cu1} < 2 < \lambda_{cu0} < \lambda_{u}$ be constants. We define rectangles $V_{0}$ and $V_{1}$ in $B$ by
\begin{equation}\label{sq}
    \begin{split}
        V_{0} &= [-1 -2\lambda_{cu0}^{-1}, -1 + 2\lambda_{cu0}^{-1}] \times [-1-2\lambda_{u}^{-1}, -1+2\lambda_{u}^{-1}], \\
        V_{1} &= [1-3\lambda_{cu1}^{-1}, 1 + \lambda_{cu1}^{-1} ] \times [1-2\lambda_{u}^{-1}, 1+2\lambda_{u}^{-1}].
    \end{split}
\end{equation}
We assume that the constants $\lambda_{cu0}$ and $\lambda_{cu1}$ satisfy the following inequality:
\begin{equation}\label{con_cover1}
    \lambda_{cu0}^{-1} + \lambda_{cu1}^{-1} > 1.
\end{equation}
We define a map $f \colon V_{0} \sqcup V_{1} \to [-2,2]^{2}$ so that $f|_{V_i}$ for $i=0,1$ are unique affine maps preserving the positive directions of $x_{u}$-axis and $y_{u}$-axis and satisfying
\begin{equation*}
    f(V_{0}) = f(V_{1}) = [-2,2]^{2}.
\end{equation*}
The map $f$ is then explicitly given by
\begin{equation}\label{3_f1}
    f(x_{u}, y_{u}) =
    \begin{cases}
        (\lambda_{cu0}(x_{u} +1), \lambda_{u}(y_{u} +1) ) & \text{if $(x_{u}, y_{u}) \in V_{0}$}, \\
        (\lambda_{cu1}(x_{u} - 1)+1 , \lambda_{u}(y_{u} -1)) & \text{if $(x_{u}, y_{u}) \in V_{1}$}.
    \end{cases}
\end{equation}

Next, choose constants $0 < \lambda_{s} < \lambda_{cs0} < 1/2 < \lambda_{cs1} <1$, and define rectangles
\begin{align*}
    \widetilde{V}_{0} &= [-1-2\lambda_{s}, -1 + 2\lambda_{s}] \times [-1 - 2\lambda_{cs0}, -1+2\lambda_{cs0}], \\
    \widetilde{V}_{1} &= [1 - 2\lambda_{s}, 1 +2\lambda_{s}] \times [1 - 3\lambda_{cs1}, 1+\lambda_{cs1} ].
\end{align*}
We assume that the constants $\lambda_{cs0}$ and $\lambda_{cs1}$ satisfy the following inequality:
\begin{equation}\label{con_cover2}
    \lambda_{cs0} + \lambda_{cs1} > 1.
\end{equation}
We define the map $g \colon \widetilde{V}_{0} \sqcup \widetilde{V}_{1} \to [-2,2]^{2}$ in the same manner as $f$ so that
\begin{equation*}
    g(\widetilde{V}_{0}) = g(\widetilde{V}_{1}) = [-2,2]^{2}.
\end{equation*}
The map $g$ is then explicitly given by
\begin{equation}\label{3_f2}
    g(x_{s},y_{s}) =
    \begin{cases}
        (\lambda_{s}^{-1}(x_{s}+1), \lambda_{cs0}^{-1}(y_{s} + 1 )) & \text{if $(x_{s}, y_{s}) \in \widetilde{V}_{0}$}, \\
        (\lambda_{s}^{-1}(x_{s} + 1) , \lambda_{cs1}^{-1}(y_{s} - 1)+1 ) & \text{if $(x_{s}, y_{s}) \in \widetilde{V}_{1}$}.
    \end{cases}
\end{equation}

\begin{rem}
    In this paper, the intersection conditions of the projections of the renctangles $(V_{i})_{i=0,1}$ and $(\widetilde{V}_{i})_{i=0,1}$ onto the $x_{u}$ and $y_{s}$-axes play an essential role.
    By condition \eqref{con_cover1}, the rectangles $(V_{i})_{i=0,1}$ have overlapping projections on the $x_{u}$-axis.
    Moreover, by the condition \eqref{con_cover2}, the rectangles $(\widetilde{V}_{i})_{i=0,1}$ have overlapping projections on the $y_{s}$-axis.
\end{rem}

Define subsets $(\mathbb{V}_{i})_{i=0,1}$ and $(\widetilde{\mathbb{V}}_{i})_{i=0,1}$ of the four-dimensional box $\mathbb{B} = [-2,2]^{4}$ by
\begin{align*}
     \mathbb{V}_{0} = V_{0} \times [-2,2]^{2}, \quad \mathbb{V}_{1} = V_{1} \times [-2,2]^{2} ,\\
     \widetilde{\mathbb{V}}_{0} = [-2,2]^{2} \times \widetilde{V}_{0}, \quad \widetilde{\mathbb{V}}_{1} = [-2,2]^{2} \times \widetilde{V}_{1}.
\end{align*}
We then define a map $F \colon \mathbb{V}_{0} \sqcup \mathbb{V}_{1} \to [-2,2]^{4}$ as the product of $f$ \eqref{3_f1} and the inverse of $g$ \eqref{3_f2}:
\begin{equation}\label{2_f}
    F(x_{u}, y_{u}, x_{s}, y_{s}) =
    \begin{cases}
        (f(x_{u},y_{u}), (g|_{V_0})^{-1}(x_{s},y_{s}))&\text{ if $(x_{u}, y_{u}, x_{s}, y_{s})\in \mathbb{V}_{0}$},\\
        (f(x_{u},y_{u}), (g|_{V_1})^{-1}(x_{s},y_{s}))&\text{ if $(x_{u}, y_{u}, x_{s}, y_{s})\in \mathbb{V}_{1}$}.
    \end{cases}
\end{equation}
The map \eqref{2_f} can be expressed explicitly as
\begin{align}\label{2_f1}
    &F(x_{u}, y_{u}, x_{s}, y_{s})  \notag  \\ &=
    \begin{cases}
        (\lambda_{cu0}(x_{u} +1), \lambda_{u}(y_{u} +1) , \lambda_{s}x_{s}-1, \lambda_{cs0}y_{s}-1 ) \\ \hspace{20em} \text{if}\ (x_{u}, y_{u}, x_{s}, y_{s}) \in \mathbb{V}_{0}, \\
       (\lambda_{cu1}(x_{u} - 1) +1, \lambda_{u}(y_{u} -1) ,\lambda_{s}x_{s} + 1, \lambda_{cs1}(y_{s} - 1) +1) \\ \hspace{20em} \text{if} \ (x_{u}, y_{u}, x_{s}, y_{s}) \in \mathbb{V}_{1}.
    \end{cases}
\end{align}
The images of $\mathbb{V}_{0}$ and $\mathbb{V}_{1}$ under $F$ are given by
\begin{align*}
    F(\mathbb{V}_{0}) = \widetilde{\mathbb{V}}_{0}, \quad
    F(\mathbb{V}_{1}) = \widetilde{\mathbb{V}}_{1}.
\end{align*}
Moreover, the constants $\lambda_{cs0}$, $\lambda_{cs1}$, and $\lambda_{u}$ are assumed to satisfy
\begin{align}\label{2_con1}
     \lambda_{cs0}\lambda_{cs1}\lambda_{u}^{2} < 1
\end{align}
In addition, we choose the constants so that
\begin{equation}\label{claim_con}
    1 < \lambda_{cs0}\lambda_u^{2} < 2.
\end{equation}

\begin{rem}
    The condition \eqref{2_con1} ensures that the wandering domain that we construct is contracting. Since $\lambda_{s} < \lambda_{cs0} < \lambda_{cs1}$ and $\lambda_{cu1} < \lambda_{cu0} < \lambda_{u}$, it follows from \eqref{2_con1} that
    \begin{equation}\label{2_con2}
        \lambda_{cu0}\lambda_{cu1}\lambda_{s}^{2} < 1.
    \end{equation}
\end{rem}

\begin{figure}[H]
    \centering
    \scalebox{0.2}{\includegraphics{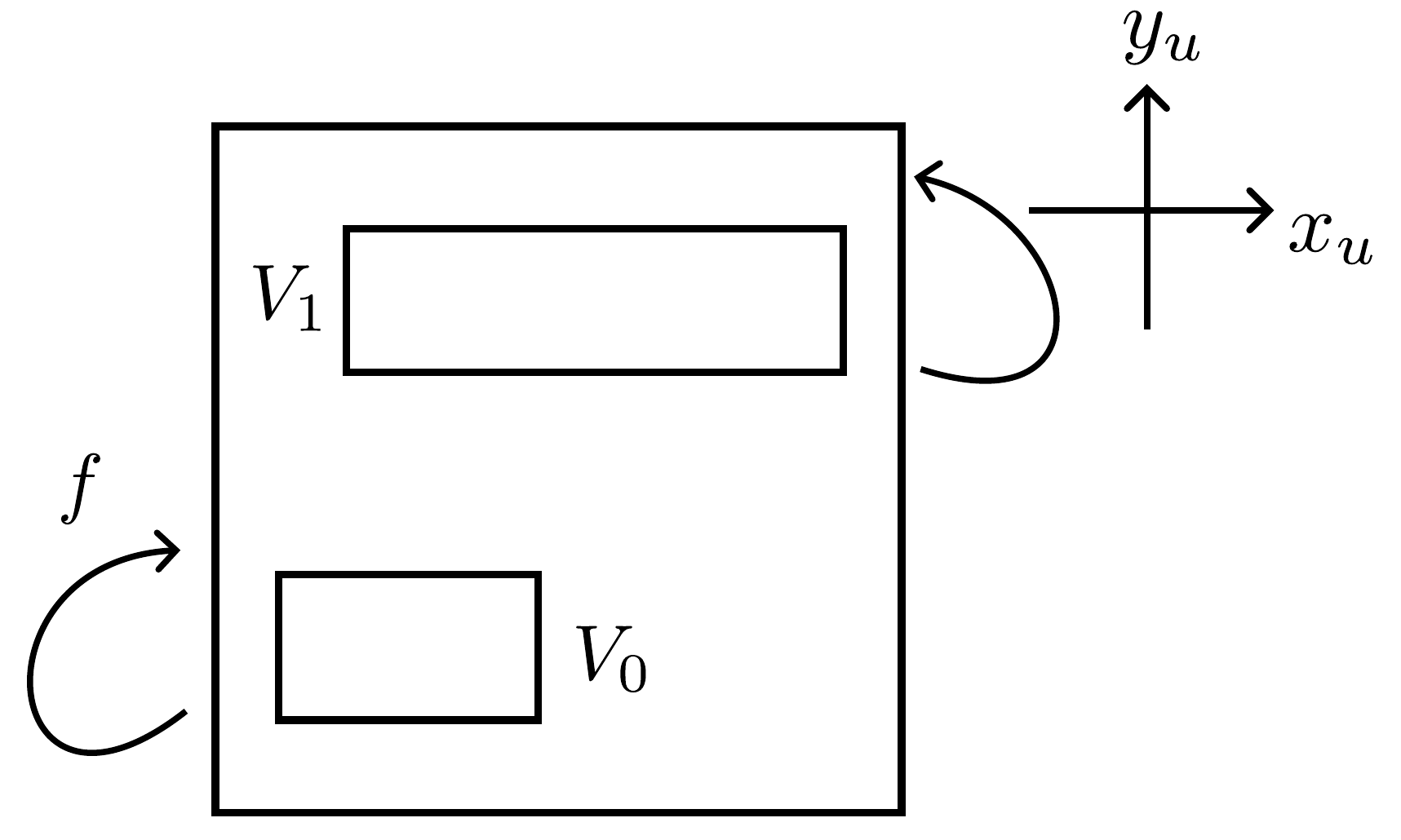}}
    \caption{}
    \label{pro_dyna}
\end{figure}

From the definition \eqref{2_f} of the diffeomorphism $F$, the closed invariant subset
\begin{equation*}
  \Lambda = \bigcap_{n \in \mathbb{Z}} F^{n}(\mathbb{V}_{0} \cup \mathbb{V}_{1})
\end{equation*}
is a hyperbolic set of $F$. It contains two fixed points of $F$,
\begin{align*}\label{fix}
    p =  \qty(-a_{cu}, -a_{u}, -a_{s}, -a_{cs}) , \quad
    q =  \qty(1, a_{u}, a_{s}, 1).
\end{align*}
where the constants $a_{cu}$, $a_{u}$, $a_{s}$ and $a_{cs}$ are given by
\begin{equation*}
    a_{cu} = \frac{1}{1-\lambda_{cu0}^{-1}},\quad a_{u} = \frac{1}{1-\lambda_{u}^{-1}}, \quad a_{s} = \frac{1}{1-\lambda_{s}},\quad a_{cs} = \frac{1}{1-\lambda_{cs0}}.
\end{equation*}

Let $0 < \delta < 1-2\lambda_{u}^{-1}$ be a constant, and consider the box
\begin{equation}\label{tan_dom}
    \mathbb{V}_{*} = \qty[-\delta,  \delta]^{2} \times [-2,2]^{2},
\end{equation}
which does not intersect $\mathbb{V}_{0} \sqcup \mathbb{V}_{1}$.
The map $F$ is then defined on $\mathbb{V}_{*}$ by
\begin{equation}\label{model_tan}
    F(x_{u}, y_{u}, x_{s}, y_{s})
    = \biggl(-x_{u}^{2} + \lambda_{*}^{-1}(x_{s} - a_{s}) +1  , -a_{1}y_{u}^{2} +\mu_{*} (y_{s} -1) +a_{u}  ,  a_{2}x_{u} , y_{u} \biggr),
\end{equation}
where $a_{1}$ and $a_{2}$ are parameters and  $a_s$, $a_u$, $\lambda_*$ and $\mu_*$ are constants.
The constants $\lambda_{*}$, $\mu_{*} > 0$ are taken to be sufficiently large.
We assume that the parameters $a_{1}$ and $a_{2}$ satisfy
\begin{equation}\label{2_par}
    a_{1} > \frac{1}{2(1-2\lambda_{u}^{-1})}, \quad 0 < a_{2} < \frac{2\lambda_{s}}{\delta}.
\end{equation}

The restriction of $F$ to $\mathbb{V}_{0} \sqcup \mathbb{V}_{1} \sqcup \mathbb{V}_*$ is injective under the parameter condition \eqref{2_par}. Since the images $F(\mathbb{V}_{0})$, $F(\mathbb{V}_{1})$ and $F(\mathbb{V}_{*})$ are mutually disjoint, the map $F$ can be extended to a diffeomorphism on $\mathbb{R}^{4}$, and we actually define it in this way.
The subsequent arguments do not depend on that extension.

By this construction, the hyperbolic set $\Lambda$ has a homoclinic tangency of codimension two.
Indeed, the stable and unstable manifolds of the fixed point $q$ contain, respectively,
\[
\qty{(1, a_{u})} \times [-2,2]^{2}, \quad
[-2,2]^{2} \times \qty{(a_{s}, 1)}
\]
and for the point $(0, 0, a_{s}, 1) \in [-2,2]^{2} \times \{( a_{s}, 1)\}$, we have
\begin{align*}
    F\qty(0,0,a_{s}, 1) = \qty(1, a_{u},0,0) \in \qty{(1, a_{u})} \times [-2,2]^{2}.
\end{align*}
Hence, the image of the latter under $F$ has a homoclinic tangency with the former, and the tangency is of codimension two.
Therefore, the hyperbolic set $\Lambda$ admits a homoclinic tangency of codimension two.

\section{Return to the Homoclinic Tangency}\label{cc}
In order to prove Theorem \ref{main}, we begin by analyzing how orbits of the map $F$ return to a neighborhood of the homoclinic tangency point.

\subsection{Unstable Boxes and Stable Boxes}
We denote by $|\underline{w}|=n$ the length of a code
\begin{equation*}
    \underline{w} = w(1) \cdots w(n) \in \{0,1\}^{n}
\end{equation*}
and define the associated $u$- and $s$-boxes by
\begin{align*}
  \mathbb{B}^{u}(\underline{w}) &= \qty{x \in \mathbb{B} \ | \ F^{i-1}(x) \in \mathbb{V}_{w(i)}, i=1, \ldots, |\underline{w}| }, \\
  \mathbb{B}^{s}(\underline{w}) &= \qty{x \in \mathbb{B} \ | \ F^{-i}(x) \in \mathbb{V}_{w(i)}, i=1, \ldots, |\underline{w}| }.
\end{align*}
For a code $\underline{w}$, we define
\begin{equation*}
    \mathbb{B}_*(\underline{w}) = \qty{x \in \mathbb{B}^{u}(\underline{w}) \ \bigg| \ F^{|\underline{w}|}(x) \in \mathbb{V}_{*} } \subset \mathbb{B}^{u}(\underline{w}).
\end{equation*}
Then
\begin{equation*}
    F^{|\underline{w}|}(\mathbb{B}_*(\underline{w})) = \mathbb{V}_{*} \cap \mathbb{B}^{s}( [\underline{w}]^{-1}),
\end{equation*}
where $[\underline{w}]^{-1} = w(n)\ldots w(1)$ denotes the code obtained by reversing the order of the symbols in $\underline{w}$.

In the above setting, let
\begin{equation*}
    c_{*}(\underline{w})=(x_{u}(\underline{w}),y_{u}(\underline{w}),0,0)
\end{equation*}
denote the center of the box $\mathbb{B}_*(\underline{w})$.
It then follows from definition of $F$ on $\mathbb{V}_{*}$ that $F^{|\underline{w}|}(c_{*}(\underline{w}))$ is the center of the box $F^{|\underline{w}|}(\mathbb{B}_*(\underline{w}))$.
In the next theorem, we construct codes for which the finite orbit
\[
c_{*}(\underline{w}), F(c_{*}(\underline{w})), \cdots, F^{|\underline{w}|}(c_{*}(\underline{w})), F^{|\underline{w}|+1}(c_{*}(\underline{w}))
\]
returns to a neighborhood of the homoclinic tangency point $q$.
The wandering domain in the main theorem will be obtained by connecting, through suitable perturbations, such orbits.

We now construct codes whose associated orbits return to a neighborhood of the homoclinic tangency with small error.
The precise statement is given in the following theorem.

\begin{thm}\label{main2}
    For any sufficiently large integer $L>0$, there exist codes $\underline{\alpha}^{(k)}$ and $\underline{\omega}^{(k+1)}$ $(k=1,2 \dots)$ satisfying the following conditions:
    \begin{itemize}
        \item [\rm{(1)}] There exists a constant $C>0$ such that
        \begin{equation*}
            |\underline{\alpha}^{(k)}| + |\underline{\omega}^{(k+1)}| < CLk.
        \end{equation*}
        \item [\rm{(2)}] For any codes $\underline{u}^{(k)}$ $(k = 1,2 \dots)$, define
        \begin{equation*}
            \underline{\gamma}^{(k)} := \underline{\alpha}^{(k)} \underline{u}^{(k)} [\underline{\omega}^{(k+1)}]^{-1}.
        \end{equation*}
        Let $c_{k}$ denote the center of the $u$-box $\mathbb{B}^u(\underline{\gamma}^{(k)})$.
        Then
        \begin{equation*}
            F^{|\underline{\gamma}^{(k)}|+1}(c_k) = c_{k+1} - (t_{k+1}, \widetilde{t}_{k+1},0,0)
        \end{equation*}
        where the real numbers $t_{k+1}$ and $\widetilde{t}_{k+1}$ satisfy
        \begin{equation*}
            |t_{k+1}|,|\widetilde{t}_{k+1}| < \lambda_{s}^{L(k+1)}.
        \end{equation*}
        \item [\rm{(3)}] The codes $\underline{\alpha}^{(k)}$ and $\underline{\alpha}^{(k+1)}$ differ starting from the $m_k$-th position, while the codes $\underline{\omega}^{(k)}$ and $\underline{\omega}^{(k+1)}$ differ starting from the $n_k$-th position. The integers $m_k$ and $n_k$ satisfy
        \begin{align*}
            m_{k} &< m_{k+1} < m_{k} + N_{u}, \\
            n_{k} &< n_{k+1} < n_{k} + N_{s},
        \end{align*}
        where $N_{u}$ and $N_{s}$ are positive integers independent of $L$ and $k$.
    \end{itemize}
\end{thm}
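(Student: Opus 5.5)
The plan is to reduce conclusion (2) to a pair of \emph{decoupled} one-dimensional matching problems. Each problem is then solved with the covering properties \eqref{con_cover1} and \eqref{con_cover2}, in the spirit of \cite{Asa2022}.

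\textbf{Step 1: compute the return.} Since $F$ is affine and diagonal on $\mathbb{V}_0\sqcup\mathbb{V}_1$, the center $c_k$ of $\mathbb{B}^u(\underline{\gamma}^{(k)})$ has $s$-coordinates $(0,0)$. Its $(x_u,y_u)$-coordinates are a point $(X(\underline{\gamma}^{(k)}),Y(\underline{\gamma}^{(k)}))$ determined by the code, and $F^{|\underline{\gamma}^{(k)}|}(c_k)$ has $u$-coordinates $(0,0)$. Its $s$-coordinates $(\xi,\eta)$ are obtained by applying the affine contractions of \eqref{2_f1} along $\underline{\gamma}^{(k)}$ starting from $(0,0)$. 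By \eqref{model_tan},
\[
F^{|\underline{\gamma}^{(k)}|+1}(c_k)=\bigl(1+\lambda_*^{-1}(\xi-a_s),\ a_u+\mu_*(\eta-1),\ 0,\ 0\bigr).
\]
Thus (2) amounts to the two scalar equations
\[
X(\underline{\gamma}^{(k+1)})\approx 1+\lambda_*^{-1}(\xi-a_s),\qquad Y(\underline{\gamma}^{(k+1)})\approx a_u+\mu_*(\eta-1),
\]
each with error $<\lambda_s^{L(k+1)}$.

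\textbf{Step 2: localize the dependence on the code.} The quantity $(X,Y)(\underline{\gamma}^{(k+1)})$ depends on $\underline{\gamma}^{(k+1)}$ only through the prefix $\underline{\alpha}^{(k+1)}$, up to an error $O(\lambda_{cu1}^{-|\underline{\alpha}^{(k+1)}|})$. The pair $(\xi,\eta)$ depends only on the suffix $[\underline{\omega}^{(k+1)}]^{-1}$, read backwards, up to an error $O(\lambda_{cs1}^{|\underline{\omega}^{(k+1)}|})$. Choosing $|\underline{\alpha}^{(k)}|,|\underline{\omega}^{(k+1)}|$ to be a fixed multiple of $Lk$ makes both errors smaller than $\tfrac12\lambda_s^{L(k+1)}$ for every choice of the free codes $\underline{u}^{(k)}$. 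This gives (1) at once.

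\textbf{Step 3: decouple the matching.} The coordinates split into four types.
\begin{itemize}
\item $X$ is the center-unstable coordinate, which has the overlapping (blender) property \eqref{con_cover1}.
\item $Y$ is the strong-unstable coordinate, a Cantor-type coordinate with rate $\lambda_u$.
\item $\xi$ is the strong-stable coordinate, a Cantor-type coordinate with rate $\lambda_s$.
\item $\eta$ is the center-stable coordinate, which has the overlapping property \eqref{con_cover2}.
\end{itemize}
The strong symbols are fixed first. Concretely, the $y_u$-itinerary of $\underline{\alpha}^{(k+1)}$ is the same symbol string as its $x_u$-itinerary, since a single code drives all coordinates. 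The genuine freedom therefore comes from the fact that a code determines $Y$ and $\xi$ rigidly, while \eqref{con_cover1} and \eqref{con_cover2} let us realize any target in an interval for $X$ and $\eta$.

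The construction then proceeds in two substeps.
\begin{enumerate}
\item Choose $\underline{\alpha}^{(k+1)}$. Its code fixes $Y(\underline{\alpha}^{(k+1)})$, which is a point of the strong-unstable Cantor set near $a_u$. Solve for the target $\eta_*=1+\mu_*^{-1}(Y-a_u)$, which lies near $1$ because $\mu_*$ is large.
\item By the covering property \eqref{con_cover2}, construct $\underline{\omega}^{(k+1)}$ greedily, symbol by symbol, so that $\eta$ lies within $\lambda_{cs1}^{n}$ of $\eta_*$. This code in turn fixes $\xi$, and then we need $X(\underline{\alpha}^{(k+1)})\approx 1+\lambda_*^{-1}(\xi-a_s)$, a target near $1$ because $\lambda_*$ is large.
\end{enumerate}

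The circularity, in which $\underline{\alpha}$ determines $\eta_*$, $\underline{\omega}$ determines $\xi$, and $\xi$ constrains $\underline{\alpha}$, is broken by contraction. The target for $X$ depends on $\underline{\omega}$ only with Lipschitz constant $\lambda_*^{-1}$, and the target for $\eta$ depends on $Y$ only with constant $\mu_*^{-1}$. So I would run the greedy expansions digit by digit, alternately. Each new digit of $\underline{\alpha}$ changes the $\eta$-target by an amount much smaller than the resolution already achieved, and symmetrically for $\underline{\omega}$. Condition \eqref{claim_con} is what I expect to use to guarantee that, at every stage, the targets remain in the overlap region where the greedy choice is possible.

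\textbf{Step 4 and main obstacle.} Condition (3) is the main obstacle. The targets at stage $k$ and stage $k+1$ differ by $O(\lambda_s^{Lk})$, and I want the codes to share long prefixes whose common part grows only by a bounded amount $N_u$ or $N_s$ per step. My plan is to build $\underline{\alpha}^{(k+1)}$ from $\underline{\alpha}^{(k)}$ by keeping its first $m_k$ symbols, which already pin down the target to the accuracy needed, and regenerating the tail greedily. The overlap in \eqref{con_cover1} leaves a uniform margin, so a target perturbation can be absorbed by changing digits only from a position at most a bounded number $N_u$ of steps beyond the previous branching point, with $N_u$ depending only on $\lambda_{cu0},\lambda_{cu1}$. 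The same argument applied to \eqref{con_cover2} gives $N_s$. Making this margin argument uniform in $L$ and $k$, while keeping the alternating coupling of Step 3 consistent, is the delicate bookkeeping I expect to occupy most of the proof.
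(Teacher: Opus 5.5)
Your Steps 1 and 2 are correct. The alternating greedy of Step 3 is in substance the linked-pair refinement of Claim \ref{claim}: it keeps the $s$-box thin in the $x_u$-direction relative to the $u$-box, and the $u$-box thin in the $y_u$-direction relative to the $s$-box, with \eqref{claim_con} balancing the two rates.

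Notice, however, that (1) and (2) by themselves need none of this. Since $f_*(a_s,1)=(1,a_u)$ is the tangency itself, the all-$1$ codes $\underline{\alpha}^{(k)}=1^{A_k}$ and $\underline{\omega}^{(k+1)}=1^{W_{k+1}}$, with $A_k,W_{k+1}$ suitable multiples of $Lk$, already give (1) and (2). What these codes violate is (3), and (3) is the real content of the theorem. The $u$-boxes (and the $s$-boxes) must form a comb in which $\underline{\alpha}^{(k)}$ actually branches off from $\underline{\alpha}^{(k+1)}$ at a depth $m_k$ whose increments are bounded by an $N_u$ \emph{independent of $L$}. This is what gives the separation $\lambda_s^{n_k}\gg\lambda_s^{Lk}$ needed to localize the perturbation in Section \ref{per}.

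This is where your proposal has a genuine gap. Regenerating the tail of $\underline{\alpha}^{(k)}$ greedily for a target that has moved by $O(\lambda_s^{Lk})$ does not force any disagreement. In the overlapping regime, a shift smaller than the margin is absorbed without changing digits. After rescaling, the shift becomes of order one only at depth of order $Lk$, so any branching it creates sits at an $L$-dependent depth, which is exactly what (3) excludes. More basically, nothing in your Steps 1--3 distinguishes stage $k$ from stage $k+1$ except the required precision, so your scheme may return the same intersection point for every $k$.

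The missing ingredient is a branching mechanism. At each stage you need a linked pair $(\bar{B}^u_k,\bar{B}^s_k)$ whose two $u$-children and two $s$-children form \emph{two} linked pairs at once. That is, the crossing point must lie in the overlap of the $u$-children in the $x_u$-direction and in the overlap of the $s$-children in the $y_u$-direction; this is the role of $\widetilde{R}^u_k\cap R^u_k$, $\widetilde{R}^s_k\cap R^s_k$ and of $\xi_0$. One child pair is frozen as $(B^u_k,B^s_k)$. The other is refined by Claim \ref{claim} into the next proportional linked pair, whose size is at least the fixed, $L$-independent fraction $\lambda_{cs0}^3\xi_0/8$ of the previous one. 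That ratio is what produces $N_u,N_s$ in Lemma \ref{ll}(3).

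The covering conditions \eqref{con_cover1} and \eqref{con_cover2} alone do not guarantee such doubly linked nodes at bounded depth for the unperturbed map. The $s$-digits only move the $x_u$-target inside $P$ of the current $s$-box, which is thin relative to the $u$-box, so they cannot be relied on to push the crossing into the overlap. The paper obtains these nodes by translating the $s$-boxes by small $\delta_k\in\bar{I}_k$ of geometrically decreasing size, and works with $(\Delta+B^s_k,B^u_k)$ where $\Delta=\sum_k\delta_k$. Only after this $L$-independent skeleton is built does Proposition \ref{critical} choose sub-boxes of size about $\lambda_s^{Lk}$ inside $B^u_k$ and $B^s_k$, so that $L$ enters only through the tails.
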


\begin{rem}
    For codes of length $n_k$, the distance between two $s$-boxes whose $n_k$-th symbols are different is given by $\lambda_{s}^{n_{k}}(1-2\lambda_{s})$.
    Since the integer $n_k$ satisfies condition (3), by choosing $L \gg N_s$ we obtain
    \begin{equation*}
        \lambda_{s}^{n_{k}}(1-2\lambda_{s}) \gtrsim \lambda_{s}^{N_{s}k} > \lambda_{s}^{Lk}.
    \end{equation*}
    This choice of $L$ produces an exponential separation of scales, so that the perturbation acts on a scale exponentially smaller than the separation between distinct $s$-boxes.
    Therefore, the perturbation defined in (2) can be localized to a neighborhood of the tangency, and outside this neighborhood the perturbed map can be chosen to coincide with the original map $F$. See Figure \ref{P}.
\end{rem}

\begin{figure}[H]
    \centering
    \scalebox{0.25}{\includegraphics{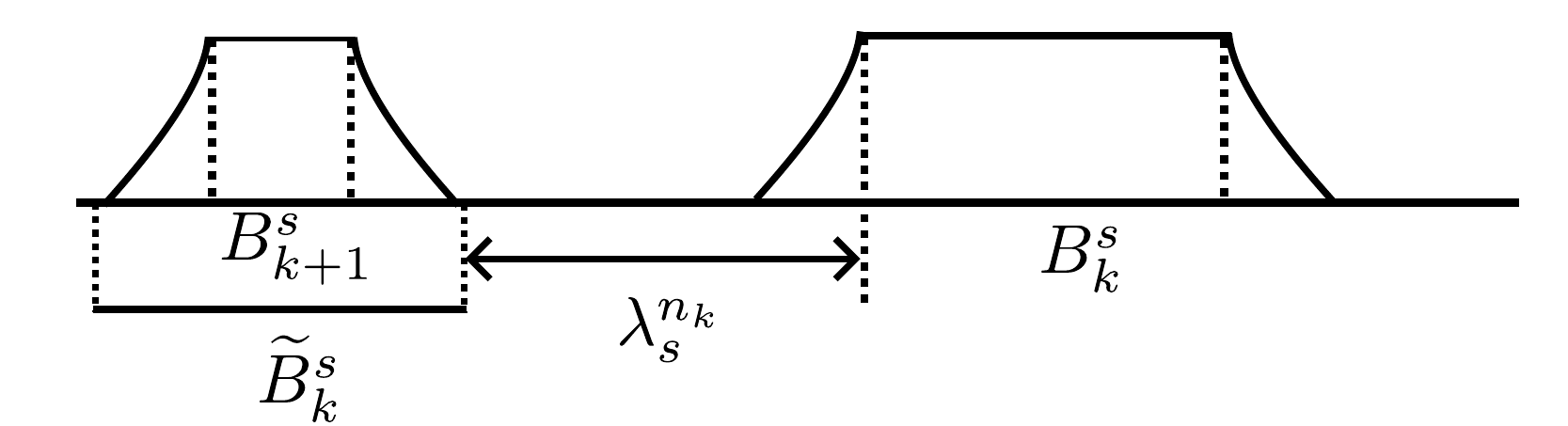}}
    \caption{}
    \label{P}
\end{figure}

We provide the proof of theorem \ref{main2} in the next Section.

\subsection{Linked rectangles}\label{C}
The map $F$ defined in \eqref{2_f} of Section \ref{model} has a hyperbolic invariant set $\Lambda$. The set $\Lambda$ is given as the product of Contor sets $\Lambda^{u}$ and $\Lambda^{s}$ in the $x_{u}y_{u}$-plane and the $x_{s}y_{s}$-plane, respectively.
Each of the Cantor sets $\Lambda^{u}$ and $\Lambda^{s}$ is given as the forward invariant set of the expanding maps $f$ and $g$ defined in \eqref{3_f1} and \eqref{3_f2}, respectively:
\begin{align}
    \Lambda^{u}(V_{0} \sqcup V_{1}, f) &= \bigcap_{n \geq 0}f^{-n}(V_{0} \sqcup V_{1}), \label{cantor1} \\
    \Lambda^{s}(\widetilde{V}_{0} \sqcup \widetilde{V}_{1}, g) &= \bigcap_{n \geq 0}g^{-n}(\widetilde{V}_{0} \sqcup \widetilde{V}_{1}).\label{cantor2}
\end{align}
In the following, in preparation for the proof of Section \ref{CV_key}, we establish several lemmas needed in the argument.

First, we fix positive constants $\lambda_{*}, \mu_{*} >0$ in \eqref{model_tan} so that
\begin{equation}\label{star}
    \lambda_{*}^{-1} < \frac{-2 + \lambda_{cu0}^{-1} + \lambda_{cu1}^{-1}a_{u}}{4} , \quad \mu_{*}^{-1} < \frac{-2 + \lambda_{cs1} + \lambda_{cs1}a_{s}}{4}.
\end{equation}
We then define a map $f_{*} \colon \mathbb{R}^{2} \to \mathbb{R}^{2}$ from the $x_{s}y_{s}$-plane to the $x_{u}y_{u}$-plane by
\begin{equation}\label{3_ftan}
    f_{*}(x_{s}, y_{s}) = (\lambda_{*}^{-1}(x_{s} -a_{s}) +1   , \mu_{*}(y_{s}-1)+a_{u}  ).
\end{equation}
This map is defined as the projection of $F$ in \eqref{model_tan} onto the $x_{s}y_{s}$-plane after restricting it to $x_{u} = y_{u} =0$.

Next, we define the rectangle $R_{*}$ in the $x_{s}y_{s}$-plane by
\begin{equation}\label{rec}
    R_{*} =  \qty[-2,2] \times \qty[1 -\mu_{*}^{-1}(2-a_{u}) ,  1 + \mu_{*}^{-1}(2-a_{u})].
\end{equation}
Let $P,Q \colon \mathbb{R}^{2} \to \mathbb{R}$ denote the projections onto the first and second coordinates, respectively.
We first note that the following facts follow immediately from the setting, but we verify them here for later use.

\begin{lem}\label{lemA}
The map $f_{*}$ defined in \eqref{3_ftan} and the rectangle $R_{*}$ defined in \eqref{rec}
satisfy the following relations:
\begin{align*}
    Q(R_{*}) \subset \mathrm{Int}\,(Q(\widetilde{V}_{1})), \quad P(R_{*}) = [-2,2], \\
    P(f_{*}(R_{*})) \subset \mathrm{Int}\,(P(V_{1})), \quad Q(f_{*}(R_{*})) = [-2,2].
\end{align*}
Moreover, for any $n \ge 0$, the rectangle $R_{*,n}$ and the map $f_{*,n}$ defined by
\begin{equation*}
    R_{*,n+1}
    = g\bigl(f_{*,n}^{-1}(V_{1}) \cap R_{*,n} \cap \widetilde{V}_{1}\bigr),
    \quad
    f_{*,n+1} = f \circ f_{*,n} \circ g^{-1}
\end{equation*}
satisfy the same relations, that is,
\begin{equation}\label{link_con}
    \begin{split}
        Q(R_{*,n}) \subset \mathrm{Int}\,(Q(\widetilde{V}_{1})), \quad P(R_{*,n}) = [-2,2], \\
        P(f_{*,n}(R_{*,n})) \subset \mathrm{Int}\,(P(V_{1})), \quad
        Q(f_{*,n}(R_{*,n})) = [-2,2].
    \end{split}
\end{equation}
\end{lem}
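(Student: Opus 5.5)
The plan is to prove \eqref{link_con} for $n=0$ by direct computation, and then by induction on $n$, tracking an explicit affine normal form for the pair $(R_{*,n},f_{*,n})$. The whole argument rests on one structural fact. Every map involved is a diagonal affine map of the plane with positive coefficients. These are $f_{*}$ from \eqref{3_ftan}, the branch $f|_{V_1}$ from \eqref{3_f1}, and the branch $g|_{\widetilde V_1}$ from \eqref{3_f2}. Hence each $f_{*,n}$ has the form
\[
f_{*,n}(x_s,y_s)=(A_n x_s+b_n,\; B_n y_s+c_n),\qquad A_n,B_n>0,
\]
and each $R_{*,n}$ is a product of intervals $[-2,2]\times J_n$. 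Every claimed relation then reduces to a statement about the endpoints of one-dimensional intervals, and $P$ and $Q$ may be handled separately.

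\emph{Base case $n=0$.} The equality $P(R_*)=[-2,2]$ holds by \eqref{rec}. For the first inclusion, $Q(R_*)$ is an interval centred at $1$ of radius $\mu_*^{-1}(2-a_u)$. The point $1$ lies in $\mathrm{Int}\,Q(\widetilde V_1)=(1-3\lambda_{cs1},1+\lambda_{cs1})$, so it suffices that this radius is small, which follows from the choice of $\mu_*$ in \eqref{star}. For the third relation, $P(f_*(R_*))$ is the image of $[-2,2]$ under $x_s\mapsto \lambda_*^{-1}(x_s-a_s)+1$. This is an interval of length $4\lambda_*^{-1}$ near $1$, and the bound on $\lambda_*^{-1}$ in \eqref{star} is exactly what places it inside $\mathrm{Int}\,P(V_1)=(1-3\lambda_{cu1}^{-1},1+\lambda_{cu1}^{-1})$. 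The fourth relation, $Q(f_*(R_*))=[-2,2]$, comes from the second coordinate $\mu_*(y_s-1)+a_u$ evaluated at the endpoints of $Q(R_*)$. Here I will have to check carefully that the endpoints in \eqref{rec} were chosen so that the image is exactly $[-2,2]$.

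\emph{Inductive step.} Assume \eqref{link_con} holds for $n$; I treat the four relations separately. First, because $Q(f_{*,n}(R_{*,n}))=[-2,2]$ and $f_{*,n}$ is monotone in $y_s$, the set $f_{*,n}^{-1}(V_1)\cap R_{*,n}$ is a horizontal sub-strip of $R_{*,n}$. Its $y_s$-extent is the preimage of $Q(V_1)=[1-2\lambda_u^{-1},1+2\lambda_u^{-1}]$. Its $x_s$-extent is all of $[-2,2]$, because $P(f_{*,n}(R_{*,n}))\subset P(V_1)$. Second, intersecting with $\widetilde V_1$ only restricts $x_s$, since by hypothesis $Q(R_{*,n})\subset Q(\widetilde V_1)$. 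Third, applying $g$, which maps $\widetilde V_1$ onto $[-2,2]^2$ by the Markov property, gives $P(R_{*,n+1})=[-2,2]$. Fourth, $f_{*,n+1}(R_{*,n+1})=f\bigl(f_{*,n}(\cdot)\cap V_1\bigr)$, and its $Q$-projection is the $f$-image of the full interval $Q(V_1)$. That image is $[-2,2]$, because $f(V_1)=[-2,2]^2$.

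This leaves the two strict inclusions, which I expect to be the main obstacle. They require the recursions
\[
A_{n+1}=\lambda_{cu1}\lambda_s A_n,\qquad |J_{n+1}|=\lambda_{cs1}^{-1}\bigl(B_n^{-1}\cdot 4\lambda_u^{-1}\bigr),\qquad B_{n+1}=\lambda_u\lambda_{cs1}^{-1}B_n,
\]
together with recursions for the offsets of the intervals from the fixed values $x_u=1$ (for $f|_{V_1}$) and $y_s=1$ (for $g|_{\widetilde V_1}$).

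For $P$, the key inputs are that $\lambda_{cu1}\lambda_s<1$, which follows from \eqref{2_con2}, and that $f|_{V_1}$ fixes $x_u=1$. Together these should show that $P(f_{*,n}(R_{*,n}))$ is a shrinking interval. I would try to show it always lies within the interval $[1+\lambda_*^{-1}(-2-a_s),\,1+\lambda_*^{-1}(2-a_s)]$, up to a uniformly controlled drift, so the base-case margin from \eqref{star} propagates.

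For $Q$, the corresponding fact is that $g|_{\widetilde V_1}$ fixes $y_s=1$. I would show that $J_n$ stays a small interval around $1$, using that $|J_n|=4\lambda_u^{-1}B_{n-1}^{-1}\lambda_{cs1}^{-1}$ is comparable to $\mu_*^{-1}$ times a bounded factor, and again invoking the smallness of $\mu_*^{-1}$ from \eqref{star}.

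The delicate point is the bookkeeping of the offsets $b_n$ and $c_n$ under conjugation by the branches fixing $1$. I would handle this by working throughout in coordinates centred at the fixed points, $\xi=x_u-1$ and $\eta=y_s-1$. In these coordinates both branches become linear, and the inclusions reduce to inequalities between scalar multiples. I would then verify directly that these inequalities are preserved.
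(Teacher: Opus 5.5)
Your route is the paper's: the base case by endpoint comparison using \eqref{star}, the two equalities in the inductive step from the Markov property $f(V_1)=g(\widetilde{V}_1)=[-2,2]^2$, and the two strict inclusions from contraction (the paper only writes ``since $\lambda_u\lambda_{cs1}>1$''). Your argument for the two equalities is complete.

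The gap is the two strict inclusions in the inductive step, which you leave as a plan. As stated the plan does not close. Centering only $x_u$ and $y_s$ at $1$ does not make the maps linear: the $x_s$-component $x_s\mapsto\lambda_s x_s+1$ of $g^{-1}$, the $y_u$-component $y_u\mapsto\lambda_u(y_u-1)$ of $f$, and both components of $f_*$ still carry translations. That is where your anticipated ``drift'' comes from, and the proposal gives no estimate for it.

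The missing observation is that $f_*(a_s,1)=(1,a_u)$. Moreover, $(1,a_u)$ is fixed by $f|_{V_1}$, and $(a_s,1)$ is fixed by the branch $(x_s,y_s)\mapsto(\lambda_s x_s+1,\lambda_{cs1}(y_s-1)+1)$ of $g^{-1}$; these are the two projections of $q$. Hence $f_{*,n}(a_s,1)=(1,a_u)$ for every $n$. In the coordinates $(x_s-a_s,\,y_s-1)$ on the source and $(x_u-1,\,y_u-a_u)$ on the target, every $f_{*,n}$ is the linear map $\mathrm{diag}\bigl((\lambda_{cu1}\lambda_s)^n\lambda_*^{-1},\,(\lambda_u\lambda_{cs1})^n\mu_*\bigr)$. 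Since $P(R_{*,n})=[-2,2]$ and $Q(R_{*,n})$ is the full preimage of $[-2,2]$ under the second component of $f_{*,n}$, this gives
\[
P\bigl(f_{*,n+1}(R_{*,n+1})\bigr)-1=\lambda_{cu1}\lambda_s\bigl(P(f_{*,n}(R_{*,n}))-1\bigr),\qquad Q(R_{*,n+1})-1=(\lambda_u\lambda_{cs1})^{-1}\bigl(Q(R_{*,n})-1\bigr).
\]
Both intervals on the right contain $0$, because $1<a_s,a_u<2$, and both factors lie in $(0,1)$. So the intervals are nested, there is no drift at all, and the strict inclusions of the base case propagate unchanged.

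Two smaller corrections. First, your recursion should read $B_{n+1}=\lambda_u\lambda_{cs1}B_n$, not $\lambda_u\lambda_{cs1}^{-1}B_n$. Your own formula $|J_{n+1}|=\lambda_{cs1}^{-1}\cdot 4\lambda_u^{-1}B_n^{-1}$, combined with $|J_{n+1}|=4/B_{n+1}$, forces this.

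Second, your worry about the fourth base-case relation is justified. With \eqref{rec} as printed, $Q(f_*(R_*))=[2a_u-2,\,2]\neq[-2,2]$. So the lower endpoint of $Q(R_*)$ has to be read as $1-\mu_*^{-1}(2+a_u)$, i.e.\ $Q(R_*)$ is the preimage of $[-2,2]$ under $y_s\mapsto\mu_*(y_s-1)+a_u$. With that reading, $Q(R_*)$ still contains $1$ and has length $4\mu_*^{-1}$. It therefore lies in $\mathrm{Int}\,Q(\widetilde{V}_1)=(1-3\lambda_{cs1},1+\lambda_{cs1})$ once $\mu_*^{-1}<3\lambda_{cs1}/4$, which \eqref{star} implies, and the induction above starts from there.
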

\begin{proof}
    From condition \eqref{star}, a direct computation implies the inequality
    \begin{equation*}
        \frac{-2 + \lambda_{cu0}^{-1} + \lambda_{cu1}^{-1}a_{u}}{4}<\frac{\lambda_{cu1}^{-1}(1-\lambda_{s})}{1-2\lambda_{s}}.
    \end{equation*}
    As a consequence, comparing the endpoints of the intervals $Q(R_{*})$ and $Q(\widetilde{V}_{1})$, we obtain
    \begin{equation*}
        1 -3\lambda_{cs0} < 1 -\mu_{*}^{-1}(2-a_{u}), \quad 1 + \mu_{*}^{-1}(2-a_{u}) < 1 +  \lambda_{cs1}.
    \end{equation*}
    Similarly, comparing $P(f_{*}(R_{*}))$ with $P(V_{1})$, condition \eqref{star} implies
    \begin{equation*}
        \frac{-2 + \lambda_{cs1} + \lambda_{cs1}a_{s}}{4} < \frac{\lambda_{cs1}(1-\lambda_{u}^{-1})}{1-2\lambda_{u}^{-1}}.
    \end{equation*}
    This yields
    \begin{equation*}
        1 -3\lambda_{cu0}^{-1} < 1 -\lambda_{*}^{-1}(2-a_{s}), \quad 1 + \lambda_{*}^{-1}(2-a_{s}) < 1 +  \lambda_{cu1}^{-1}.
    \end{equation*}
    These inequalities imply the claimed inclusion relations, and therefore $R_{*}$ and $f_{*}$ satisfy the desired properties.

    By the definition of the rectangle $R_{*}$, we have
    \begin{equation}\label{hougan}
        P(f_{*}(R_{*})) \subset \text{Int}\, P(V_{1}), \ Q(R_{*}) \subset \text{Int}\, Q(\widetilde{V}_{1}).
    \end{equation}
    In particular, \eqref{hougan} ensures that the intersection defining the next rectangle is nonempty.
    \begin{align*}
        &f_{*}^{-1}(V_{1}) \cap R_{*} \cap \widetilde{V}_{1} \\
        &= \qty[1-2\lambda_{s}, 1 + 2\lambda_{s}] \times \qty[1 + \mu_{*}^{-1}(1-2\lambda_{u}^{-1}-a_{u}), 1+\mu_{*}^{-1}(1+2\lambda_{u}^{-1}-a_{u})].
    \end{align*}
    We define the rectangle $R_{*,1}$ by
    \begin{equation*}
        R_{*,1} = g(f_{*}^{-1}(V_{1}) \cap R_{*} \cap \widetilde{V}_{1}).
    \end{equation*}
    The rectangle $R_{*,1}$ satisfies
    \begin{align*}
        P(R_{*,1}) = P(g(\widetilde{V}_{1})) = [-2,2], \\
        Q(f \circ f_{*} \circ g^{-1}(R_{*,1})) = Q(f(V_{1})) = [-2,2].
    \end{align*}
    Moreover, since the constant $\lambda_{u}\lambda_{cs1}>1$, we obtain
    \begin{equation*}
        Q(R_{*,1}) \subset \text{Int}\, Q(\widetilde{V}_{1}), \ P(f \circ f_{*} \circ g^{-1}(R_{*,1})) \subset \text{Int}\,P(V_{1}).
    \end{equation*}
    Therefore, \eqref{link_con} holds for the pair $(g(f_{*}^{-1}(V_{1}) \cap R_{*} \cap \widetilde{V}_{1}), f \circ f_{*} \circ g^{-1})$ which corresponds to the case $n=0$. See figure \ref{R}.
    For $n \geq 1$, assume that \eqref{link_con} holds for the pair $(R_{*,n}, f_{*,n})$.
    Recall that the pair $(R_{*,n}, f_{*,n})$ is defined by
    \begin{equation*}
        R_{*,n} = g(f_{*,n-1}^{-1}(V_{1}) \cap R_{*,n-1} \cap \widetilde{V}_{1}), \quad f_{*,n} = f \circ f_{*,n-1} \circ g^{-1}.
    \end{equation*}
    In particular, the intersection $f_{*,n}^{-1}(V_{1}) \cap R_{*,n} \cap \widetilde{V}_{1}$ is nonempty.
    We then define
    \begin{equation*}
        R_{*,n+1} = g(f_{*,n}^{-1}(V_{1}) \cap R_{*,n} \cap \widetilde{V}_{1}), \quad f_{*,n+1} = f \circ f_{*,n} \circ g^{-1}.
    \end{equation*}
    The same argument as in the case $n=0$ shows that \eqref{link_con} also holds for the pair $(R_{*,n+1}, f_{*,n+1})$.
\end{proof}

Thus \eqref{link_con} holds throughout the iterative construction and will be
used in Section \ref{CV_key}.

\begin{figure}[H]
    \centering
    \scalebox{0.19}{\includegraphics{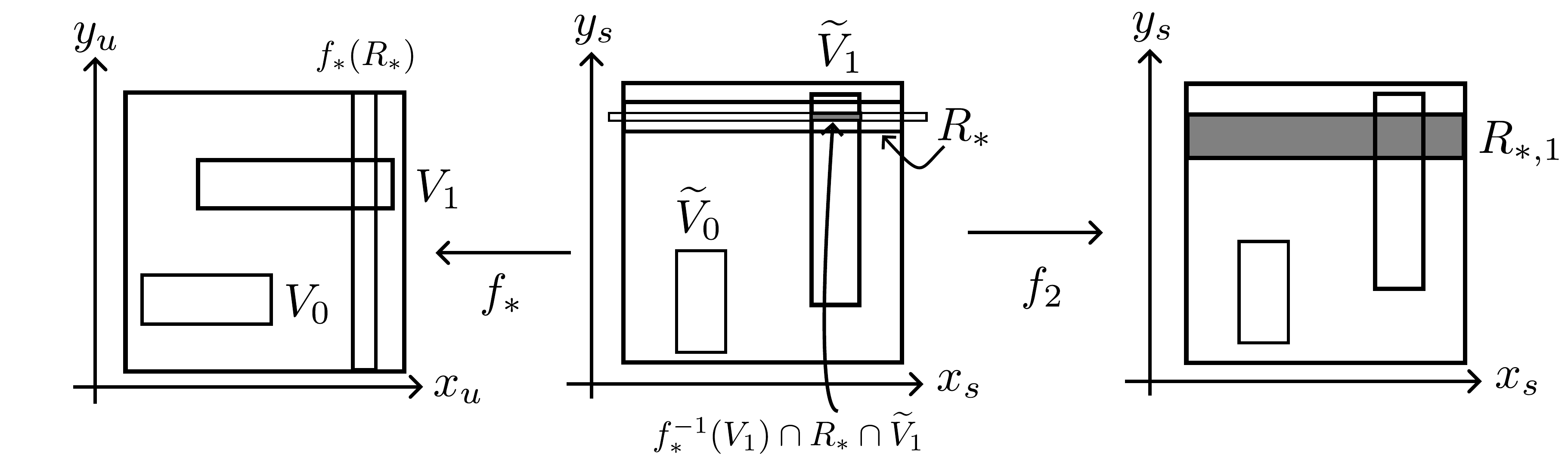}}
    \caption{}
    \label{R}
\end{figure}

\subsection{Proof of Theorem \ref{main2}}\label{CV_key}
In what follows, for $m \in \mathbb{N}$, we use the box norm $|\cdot|$ on the $m$-dimensional Euclidean space $\mathbb{R}^{m}$, that is,
\begin{equation*}
    |(x_{1},x_{2}, \ldots,  x_{m})| = \max \{|x_{1}|, |x_{2}|, \ldots , |x_{m}|\}.
\end{equation*}
We now introduce the two-dimensional $u$- and $s$-boxes in the $x_{u}y_{u}$-plane and prepare for the proof of theorem \ref{main2}.

Let $\pi_{u} \colon \mathbb{R}^{4} \to \mathbb{R}^{2}$ be the projection defined by
\begin{equation*}
    \pi_{u}(x_{u},y_{u},x_{s},y_{s}) = (x_{u},y_{u}).
\end{equation*}
For a code $\underline{w} \in \{0,1\}^{n}$, we define the $u$-box $B^{u}(\underline{w})$ by
\begin{equation}\label{box}
    B^{u}(\underline{w}) = \pi_{u} (\mathbb{B}^{u}(\underline{w})).
\end{equation}
We define the Cantor set $\Lambda^{u} \subset \mathbb{R}^{2}$ by
\begin{equation}\label{box2}
    \Lambda^{u} = \bigcap_{n \geq 1} \bigcup_{\underline{w} \in \{0,1\}^{n}} B^{u}(\underline{w}).
\end{equation}
Moreover, by the definition of the map $F$ in \eqref{2_f}, we have
\begin{equation*}
    F(0, 0, x_{s}, y_{s}) = (x_{s}, y_{s}, 0, 0)
\end{equation*}
for every $(0, 0, x_{s},  y_{s}) \in \mathbb{V}_{*}$.
Thus we define $s$-box $B^{s}(\underline{w})$ by
\begin{equation}\label{box3}
    B^{s}(\underline{w}) = \pi_{u} \circ F\qty(\qty(\{(0,0)\} \times [-2,2]^{2}) \cap \mathbb{B}^{s}(\underline{w})).
\end{equation}
We then define the Cantor set $\Lambda^{s} \subset \mathbb{R}^{2}$ by
\begin{equation}\label{box4}
    \Lambda^{s} = \bigcap_{n \geq 1} \bigcup_{\underline{w} \in \{0,1\}^{n}} B^{s}(\underline{w}).
\end{equation}

\begin{rem}
    The set $\Lambda^{u}$ defined in \eqref{box2} coincides with the forward invariant set $\Lambda^{u}(V_{0} \sqcup V_{1}, f)$ of $f$ in \eqref{cantor1}.
    Similarly, the set $\Lambda^{s}$ defined in \eqref{box4} coincides with the image $f_{*}(\Lambda^{s}(\widetilde{V}_{0} \sqcup \widetilde{V}_{1}, g))$ of the forward invariant set of $g$ in \eqref{cantor2} under the map $f_{*}$.
\end{rem}

In what follows, we describe the construction of the codes required for the proof of theorem \ref{main2}, using the $u$-boxes and $s$-boxes introduced in \eqref{box} and \eqref{box3}.
To this end, we first consider pairs of $u$- and $s$-boxes that intersect.
We then show that, by applying a small perturbation to the $s$-box in such a pair, one can obtain a new intersecting pair.
By iterating this procedure, we construct the codes necessary for theorem \ref{main2}.

Let $B^{u}(\underline{a})$ and $B^{s}(\underline{w})$ denote the $u$-box and $s$-box determined by the codes $\underline{a} \in \{0,1\}^{m}$ and $\underline{w} \in \{0,1\}^{n}$, respectively.
For a $u$-box $B^{u}(\underline{a})$ and an $s$-box $B^{s}(\underline{w})$ in $\mathbb{R}^{2}$, we define the norm of each box, for $i \in \{u,s\}$, by
\begin{equation*}
    |B^{i}(\underline{v})| = \max \{|P(B^{i}(\underline{v}))|,\  |Q(B^{i}(\underline{v}))|\}.
\end{equation*}
where $\underline{v}$ denotes the corresponding code.
We say that a pair $(B^{u}(\underline{a}), B^{s}(\underline{w}))$ of a $u$-box and an $s$-box is a \emph{linked pair} if $B^{u}(\underline{a}) \cap B^{s}(\underline{w}) \neq \emptyset$ and
\begin{equation*}
    P(B^{s}(\underline{w})) \subset \mathrm{Int}\,P(B^{u}(\underline{a})) \quad \text{and} \quad Q(B^{u}(\underline{a})) \subset \mathrm{Int}\,Q(B^{s}(\underline{w})).
\end{equation*}

\begin{rem}\label{lin_pair}
    For each $i \in \{0,1\}$, the rectangles $(V_{i})_{i=0,1}$ and $(\widetilde{V}_{i})_{i=0,1}$ satisfy
    \begin{equation*}
        Q(V_{i}) \subset (-2,2), \quad P(\widetilde{V}_{i}) \subset (-2,2).
    \end{equation*}
    Moreover, by lemma \ref{lemA}, we see that $B^{u}(1)$ and $B^{s}(1)$ form a linked pair.
    In particular, each $R_{*,n}$ produced by the iterative construction above gives rise to a linked pair of $u$- and $s$-boxes.
\end{rem}

Next, we say that a linked pair $(B^{u}(\underline{a}), B^{s}(\underline{w}))$ is \emph{$\kappa$-proportional} if there exists a constant $\kappa \in (0,1)$, independent of $B^{u}$ and $B^{s}$, such that
\begin{equation*}
  \kappa|B^{u}(\underline{a})| <  |B^{s}(\underline{w})| \leq |B^{u}(\underline{a})|.
\end{equation*}
We say that two $u$-boxes $B^{u}_{1}$ and $B^{u}_{2}$ are \emph{related} if one corresponds to a code obtained from the other by adding one additional symbol.
In the same manner, we define two $s$-boxes $B^{s}_{1}$ and $B^{s}_{2}$ to be related.

The following lemma shows that, for the linked pair $(B^{u}(1), B^{s}(1))$ of a $u$-box and an $s$-box, the related pairs of $u$- and $s$-boxes contained in each of them give rise to two new linked pairs after applying a sufficiently small perturbation.
This provides a fundamental tool in the construction of the codes.

Here we define in advance suitable subsets of the corresponding $u$-boxes and $s$-boxes in order to exploit the blender property in the following lemma.
First, choose a constant $c >0$ so that
\begin{equation*}
    \lambda_{cs0} + \lambda_{cs1} > \frac{1}{1-c}.
\end{equation*}
We then define the rectangle $A$ by
\begin{equation}\label{super}
    A = [-a_{cu} + c, 1-c] \times [-2,2].
\end{equation}
Next, define rectangles $(R_{i})_{i=0,1}$ by
\begin{align*}
    R_{0} &= [-1 + \lambda_{cu0}^{-1}(-a_{cu} +c), -1 + \lambda_{cu0}^{-1}(1-c)] \times [-1 -2\lambda_{u}^{-1}, -1 + 2\lambda_{u}^{-1}], \\
    R_{1} &= [1 + \lambda_{cu1}^{-1}(-a_{cu} +c), 1 + \lambda_{cu1}^{-1}(1-c)] \times [1-2\lambda_{u}^{-1}, 1+\lambda_{u}^{-1}].
\end{align*}
For a $u$-box $B^{u}(\underline{a})$, we define its subset
\begin{equation*}
    R^{u}(\underline{a}) = \qty{ x \in A \mid f^{i-1}(x) \in R_{a(i)},\ i=1,\dots |\underline{a}|}.
\end{equation*}
The construction for $s$-boxes is analogous. We set
\begin{equation*}
    \widetilde{A} = [-2,2] \times [-a_{cs} + c , 1-c]
\end{equation*}
and define rectangles $(\widetilde{R}_{i})_{i=0,1}$ by
\begin{align*}
    \widetilde{R}_{0} &= [-1 -2\lambda_{s}, -1 + 2\lambda_{s}] \times [-1 + \lambda_{cs0}(-a_{cs} +c), -1 + \lambda_{cs0}(1-c)], \\
    \widetilde{R}_{1} &= [1-2\lambda_{s}, 1+\lambda_{s}] \times [1 + \lambda_{cs1}(-a_{cs} +c), 1 + \lambda_{cs1}(1-c)].
\end{align*}
For an $s$-box $B^{s}(\underline{w})$, we define its subset
\begin{equation*}
    R^{s}(\underline{a}) = \qty{ x \in A \mid g^{-i} \circ f^{-1}_{*}(x) \in \widetilde{R}_{a(i)},\ i=1,\dots |\underline{w}|}.
\end{equation*}
Finally, we choose the rectangles $Z$ and $\widetilde{Z}$ so that, for any $i \in \{0,1\}$, the following conditions.
\begin{align*}
    Q(R_{i}) \subset \text{Int}Q(Z), \ P(f(R_{i})) \subset \text{Int}P(Z), \ Q(f(R_{i})) = Q(Z), \\
    P(\widetilde{R}_{i}) \subset \text{Int}P(\widetilde{Z}), \ Q(f(\widetilde{R}_{i})) \subset \text{Int}Q(\widetilde{Z}), \ P(f(\widetilde{R}_{i})) = P(\widetilde{Z}).
\end{align*}
Morover, the family $(P(R_{i}))_{i=0,1}$ covers $P(Z)$ and the family $(Q(\widetilde{R}_{i}))_{i=0,1}$ covers $Q(\widetilde{Z})$.

\begin{lem}\label{ll}
    There exist positive constants $N_{u}$ and $N_{s}$ satisfying the following:
    For any $\epsilon >0$, there exist a vector $\Delta$ in $\mathbb{R}^{2}$ with $|\Delta| < \epsilon$ and sequences of codes $(\underline{a}^{(k)})_{k \geq 1}$ and $(\underline{w}^{(k)})_{k \geq 1}$ such that, for every $k \geq 1$, the following hold:
    \begin{itemize}
        \item [\rm{(1)}] The $u$-boxes $B^{u}_{k}$ $(k=1,2,\dots)$ are pairwise disjoint and the $s$-boxes $B^{s}_{k}$ $(k=1,2,\dots)$ are pairwise disjoint.
        \item [\rm{(2)}] Let $B^{u}_{k}$ and $B^{s}_{k}$ denote the $u$-box and $s$-box determined by the codes $\underline{a}^{(k)}$ and $\underline{w}^{(k)}$, respectively.
        Then $(\Delta + B^{s}_{k}, B^{u}_{k})$ is a linked $\lambda_{cu0}^{-1}$-proportional pair.
        \item [\rm{(3)}] The codes $\underline{a}^{(k)}$ and $\underline{w}^{(k)}$ satisfy
        \begin{align*}
            |\underline{a}^{(k)}| &< |\underline{a}^{(k+1)}| \leq |\underline{a}^{(k)}| + N_{u}, \\
            |\underline{w}^{(k)}| &< |\underline{w}^{(k+1)}| \leq |\underline{w}^{(k)}| + N_{s}.
        \end{align*}
    \end{itemize}
\end{lem}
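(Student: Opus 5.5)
This lemma is a two-dimensional "blender" covering statement, and I would prove it by an inductive refinement scheme in the spirit of Asaoka's argument in \cite{Asa2022}. The starting point is Remark \ref{lin_pair}: by Lemma \ref{lemA}, $(B^{s}(1), B^{u}(1))$ is a linked pair.

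The basic step is a refinement of a single linked pair. Let $(\Delta' + B^{s}(\underline{w}), B^{u}(\underline{a}))$ be a linked pair.

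1. Apply the inverse-branch renormalization. Rescaling $B^{u}(\underline{a})$ by $f^{|\underline{a}|}$ and $B^{s}(\underline{w})$ by the corresponding iterate of $g$ (conjugated by $f_{*}$) gives a configuration of the same type as $(B^{s}(1), B^{u}(1))$. This is exactly the content of \eqref{link_con}: $P$ of the $s$-box sits inside the $x_{u}$-width of the $u$-box, and $Q$ of the $u$-box sits inside the $y_s$-height of the $s$-box.
2. Use the covering conditions. In the horizontal direction, the family $(P(R_i))_{i=0,1}$ covers $P(Z)$; this is where \eqref{con_cover1} enters. So some child $u$-box $B^{u}(\underline{a}i)$ still contains $P(B^{s})$ in the interior of its $P$-projection.
3. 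Do the symmetric thing vertically. The family $(Q(\widetilde R_i))_{i=0,1}$ covers $Q(\widetilde Z)$, via \eqref{con_cover2}, so a child $s$-box $B^{s}(\underline{w}j)$ has $Q$-projection containing $Q(B^{u})$.
4. Absorb the boundary losses. The margin $c$ in the sets $A$, $R_i$, $\widetilde R_i$ guarantees that the containments are strict with a definite relative room. Boundary effects from the thinner direction can therefore be absorbed by a translation $\Delta$ of relative size comparable to the current box size.

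Each refinement thus shrinks the box sizes by a bounded factor and moves $\Delta$ by an amount proportional to the current size. The corrections $\Delta_k - \Delta_{k-1}$ form a geometric series, so the limit $\Delta$ exists, with $|\Delta|$ controlled by the size of the first pair. To get $|\Delta|<\epsilon$ I would start the induction not at $(B^{s}(1),B^{u}(1))$ but at a deep linked pair $(B^{s}(1^n),B^{u}(1^n))$ given by Lemma \ref{lemA} for $n$ large. Its size is at most $\lambda_{cu1}^{-n}$ or $\lambda_{cs1}^{n}$, which is below $\epsilon$.

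To get the $\lambda_{cu0}^{-1}$-proportionality, at each step I add symbols to whichever code currently has the larger box. Adding one symbol to a $u$-code shrinks $|B^u|$ by a factor in $[\lambda_{u}^{-1},\lambda_{cu1}^{-1}]$, and similarly for $s$-codes. Since sizes are measured in the box norm (max of the two sides), a balancing argument keeps
\[
\kappa |B^u| < |B^s| \le |B^u|,
\]
with $\kappa=\lambda_{cu0}^{-1}$, after finitely many single-symbol additions. I expect the correct ratio to require \eqref{claim_con}. This balancing also gives the bounds $N_u, N_s$ in (3): each passage from the $k$-th pair to the $(k+1)$-th adds at least one and at most a uniformly bounded number of symbols.

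For the pairwise disjointness in (1), the $k$-th pair should not be nested in the previous one. Instead, I would branch off at the final step. The sibling code obtained by flipping the last symbol lies in a different, disjoint child box, and the covering property still supplies a valid link because of the overlap given by \eqref{con_cover1} and \eqref{con_cover2}. Taking $B^u_k$ to be this sibling box, while continuing the induction along the other branch, makes all the $B^u_k$ pairwise disjoint, and likewise for the $B^s_k$.

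The main obstacle is the single uniform translation $\Delta$. It must work for all $k$ simultaneously, while at every level one has to choose children so that both the horizontal containment and the vertical containment survive after the perturbation. Here the two directions interact through $f_*$, which rescales them by the different factors $\lambda_*^{-1}$ and $\mu_*$. I would handle this by choosing the child, and hence the correction $\Delta_k-\Delta_{k-1}$, using the slack from the margin $c$. I would then verify the strict inequality $\sum_{j>k}|\Delta_j-\Delta_{j-1}| < $ the margin at level $k$, using the geometric decay from the balancing step.
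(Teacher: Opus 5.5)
Your proposal is correct in outline and follows essentially the same route as the paper. Both start from a deep all-$1$ linked pair of size comparable to $\epsilon$, refine it by renormalization plus the Lebesgue-number argument for the coverings $(P(R_i))$ of $P(Z)$ and $(Q(\widetilde{R}_i))$ of $Q(\widetilde{Z})$ (the paper's Claim), and split at each level into sibling linked pairs $(\widetilde{B}^{s}_{k},\widetilde{B}^{u}_{k})$ and $(B^{s}_{k},B^{u}_{k})$, continuing along the first and keeping the second, with translations $\delta_k$ that place the boxes in the overlap of the children, sum geometrically below the earlier slack $|\bar{B}^{s}_{t}|/2$, and size ratios between levels that yield $N_u$ and $N_s$.
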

\begin{proof}
    We begin with part (1), where we inductively construct the $s$-boxes and $u$-boxes satisfying the desired properties.
    Consider first the case $k=1$.
    By remark \ref{lin_pair}, the boxes $B^{s}(1)$ and $B^{u}(1)$ form a linked pair.
    This initial linked pair, together with the preservation of the linked condition, allows us to construct a sequence of linked pairs inductively.
    Hence, the $u$-box and $s$-box whose symbols consist entirely of $1$'s,
    \begin{equation*}
        \bar{B}^{u}_{1}= B^{u}(1^{n_{1}-1}), \ \bar{B}^{s}_{1}= B^{s}(1^{m_{1}-1})
    \end{equation*}
    where $1^{k}$ denotes the symbol sequence consisting of $k$ consecutive symbols $1$, form a linked pair.
    Moreover, we choose the codes so that they satisfy
    \begin{equation}\label{ll_1}
        \frac{\lambda_{cs0}^{2}}{2}\epsilon \leq |\bar{B}^{s}_{1}| < \frac{\lambda_{cs0}}{2}\epsilon, \quad  \lambda_{cu0}^{-1}|\bar{B}^{u}_{1}| < |\bar{B}^{s}_{1}| \leq |\bar{B}^{u}_{1}|.
    \end{equation}
    Next, we examine the condition under which $(\delta_{1} + \bar{B}^{s}_{1}) \cap \bar{B}^{u}_{1}\neq \emptyset$.
    We define
    \begin{equation*}
        \bar{I}_{1} = \qty{ \delta \in \mathbb{R}^{2} \mid (\delta + \bar{B}^{s}_{1}) \cap \bar{B}^{u}_{1} \neq \emptyset }.
    \end{equation*}
    In particular, since $\bar{B}^{s}_{1}$ and $\bar{B}^{u}_{1}$ form a linked pair, we have $(0,0) \in \bar{I}_{1}$.
    Moreover, each side length of $\bar{I}_{1}$ is given by
    \begin{equation*}
        |P(\bar{I}_{1})| = |\bar{B}^{u}_{1}| + |P(\bar{B}^{s}_{1})|, \quad |Q(\bar{I}_{1})| = |\bar{B}^{s}_{1}| + |Q(\bar{B}^{u}_{1})|.
    \end{equation*}
    Furthermore, by \eqref{ll_1}, we have
    \begin{align}\label{ll_3}
        |\bar{B}^{u}_{1}| < \lambda_{cu0}|\bar{B}^{s}_{1}| < \frac{\lambda_{cs0}\lambda_{cu0}}{2}\epsilon.
    \end{align}
    Combining \eqref{ll_1} and \eqref{ll_3}, and using the condition in \eqref{2_con1}, the side lengths of $\bar{I}_{1}$ satisfy
    \begin{align*}
        |P(\bar{I}_{1})|, \ |Q(\bar{I}_{1})| <
        |\bar{B}^{s}_{1}| + |\bar{B}^{u}_{1}| &< \frac{\lambda_{cs0}}{2}\epsilon + \frac{\lambda_{cs0} \lambda_{cu0}}{2}\epsilon \\
        &< \epsilon.
    \end{align*}
    Hence we obtain $\bar{I}_{1} \subset (-\epsilon, \epsilon)^{2}$.
    In particular, each coordinate of the perturbation $\delta_{1}$ is bounded above by $\epsilon$.
    Next, in order to continue the inductive construction, we take the related pairs of $\bar{B}^{s}_{1}$ and $\bar{B}^{u}_{1}$, denoted $\widetilde{B}^{s}_{1}, B^{s}_{1}$ and $\widetilde{B}^{u}_{1}, B^{u}_{1}$, respectively. Since a related pair consists of the two boxes obtained by extending the code by one symbol, we may label these two boxes so that
    \begin{equation*}
        \max P(\widetilde{B}^{s}_{1}) < \min P(B^{s}_{1}), \quad \max Q(\widetilde{B}^{u}_{1}) < \min Q(B^{u}_{1}).
    \end{equation*}
    See Figure \ref{us-box}.

    \begin{figure}[H]
      \centering
      \scalebox{0.17}{\includegraphics{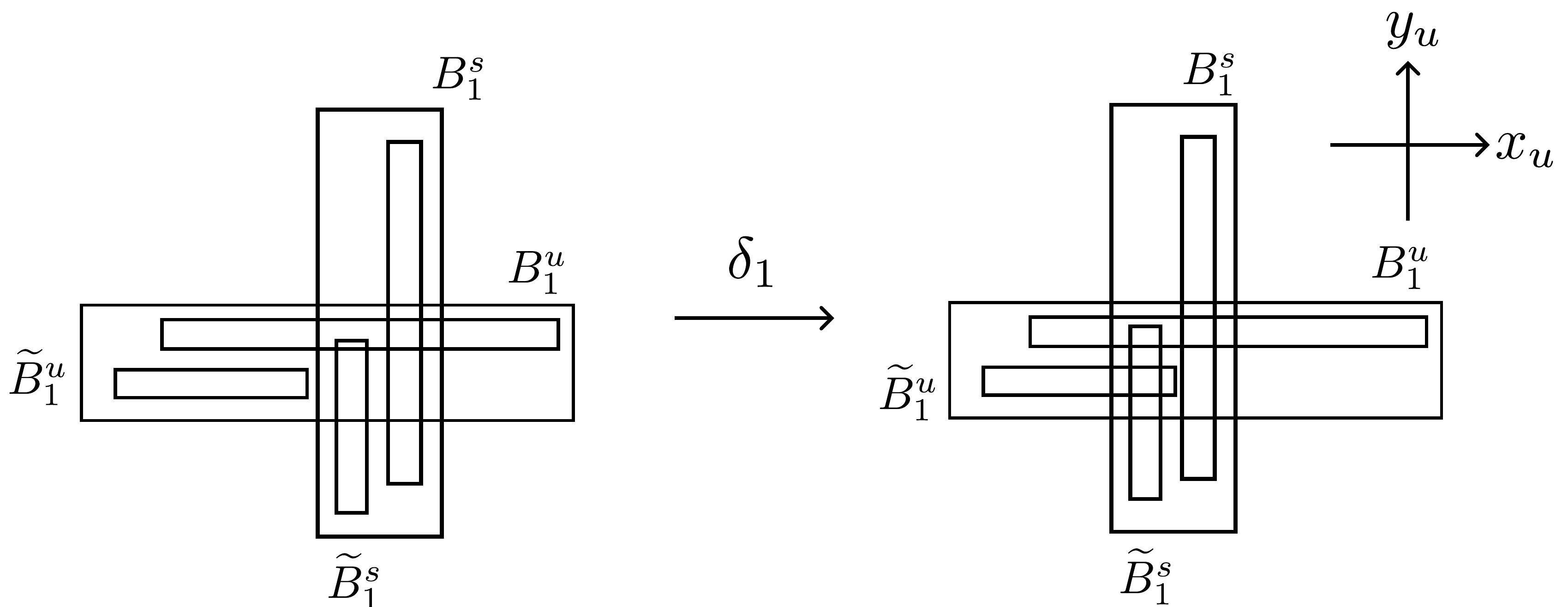}}
      \caption{The perturbed linked pairs $(\delta_{1} + \widetilde{B}^{s}_{1}, \widetilde{B}^{u}_{1})$ and $(\delta_{1} + B^{s}_{1},B^{u}_{1})$.}
      \label{us-box}
    \end{figure}

    Moreover, by \eqref{ll_1}, by taking the code length sufficiently large, we may choose the $u$-box $\bar{B}^{u}_{1}$ and the $s$-box $\bar{B}^{s}_{1}$ so that
    \begin{align*}
        |P(B^{s}_{1})| < |P(B^{u}_{1})|, \quad |P(\widetilde{B}^{s}_{1})| < |P(\widetilde{B}^{u}_{1})|,\\
        |Q(B^{u}_{1})|  < |Q(B^{s}_{1})|, \quad |Q(\widetilde{B}^{u}_{1})| < |Q(\widetilde{B}^{s}_{1})|.
    \end{align*}
    Hence there exists $\delta_{1} \in \mathbb{R}^{2}$ satisfying
    \begin{equation}\label{fir_lin}
        \begin{split}
            P(\delta_{1} +\widetilde{B}^{s}_{1}) \subset P(\widetilde{B}^{u}_{1}), \quad P(\delta_{1} + B^{s}_{1}) \subset P(B^{u}_{1}), \\
            Q(\widetilde{B}^{u}_{1}) \subset Q(\delta_{1} + \widetilde{B}^{s}_{1}), \quad Q(B^{u}_{1}) \subset Q(\delta_{1} + B^{s}_{1}).
        \end{split}
    \end{equation}
    Such a vector $\delta_{1}$ satisfying these inclusion relations belongs to $\bar{I}_{1}$.
    See also Figure \ref{us-box}.
    Moreover, by \eqref{ll_1} and by the definitions of $\widetilde{B}^{s}_{1}$ and $B^{s}_{1}$, we obtain
    \begin{equation*}
        \frac{\lambda_{cs0}^{3}}{2}\epsilon \leq |\widetilde{B}^{s}_{1}| < \frac{\lambda_{cs0}^{2}\epsilon}{2}, \quad\frac{\lambda_{cs0}^{2}\lambda_{cs1}}{2}\epsilon \leq |B^{s}_{1}| < \frac{\lambda_{cs0}\lambda_{cs1}}{2}\epsilon.
    \end{equation*}
    The $s$-box $\widetilde{B}^{s}_{1}$ and the $u$-box $\widetilde{B}^{u}_{1}$ will be used to construct the next $s$-box and $u$-box.
    From this point on, we prepare for an inductive construction of $u$-boxes and $s$-boxes.
    In order to apply the blender property in this procedure, we restrict our argument to appropriate subsets of the corresponding $u$-boxes and $s$-boxes.
    More precisely, for $\widetilde{B}^{s}_{1}= B^{s}(1^{m_{1}-1}0)$ and $B^{s}_{1} = B^{s}(1^{m_{1}})$, we consider the subsets
    \begin{equation*}
        \widetilde{R}^{s}_{1} = R^{s}(1^{m_{1}-1}0), \ R^{s}_{1} = R^{s}(1^{m_{1}}).
    \end{equation*}
    In what follows, we will use these restricted subsets of $u$-boxes and $s$-boxes only when invoking the blender property.
    We observe that $Q(\widetilde{R}^{s}_{1} \cap R^{s}_{1}) \neq \emptyset$, and the relative proportion of $Q(\widetilde{R}^{s}_{1} \cap R^{s}_{1})$ inside $\widetilde{R}^{s}_{1}$ satisfies
    \begin{equation*}
        \frac{|Q(\widetilde{R}^{s}_{1} \cap R^{s}_{1})|}{|\widetilde{R}^{s}_{1}|} > \frac{-1 + \lambda_{cs1}a_{s}}{3} = \xi_{0}.
    \end{equation*}
    Since $\bar{B}^{s}_{1}$ and $\bar{B}^{u}_{1}$ form a linked pair, and by the choice of the constants $\mu_{*}$ and $\lambda_{*}$ in \eqref{star}, we can choose $\delta_{1} \in \bar{I}_{1}$ so that
    \begin{equation}\label{lin_per}
        P(\delta_{1} + \bar{B}^{s}_{1}) \subset P(\widetilde{R}^{u}_{1} \cap R^{u}_{1}), \ Q(\bar{B}^{u}_{1}) \subset Q(\delta_{1} + (\widetilde{R}^{s}_{1} \cap R^{s}_{1})).
    \end{equation}
    Even under this perturbation, \eqref{fir_lin} still holds.
    \begin{claim}\label{claim}
        There exists a linked pair consisting of a $u$-box $\bar{B}^{u}_{2} \subset \widetilde{B}^{u}_{1}$ and an $s$-box $\bar{B}^{s}_{2} \subset \widetilde{B}^{s}_{1}$ such that the following inequality holds:
        \begin{equation}\label{lll}
            \frac{\lambda_{cs0}^{2}\xi_{0}}{8}|\widetilde{B}^{s}_{1}| \leq |\bar{B}^{s}_{2}| \leq \frac{\lambda_{cs0}\xi_{0}}{8}|\widetilde{B}^{s}_{1}|, \ \lambda_{cu0}^{-1} |\bar{B}^{u}_{2}| < |\bar{B}^{s}_{2}| \leq |\bar{B}^{u}_{2}|.
        \end{equation}
    \end{claim}
    \begin{proof}
        By the definition of a linked pair, $\widetilde{B}^{u}_{1}$ and $\widetilde{B}^{s}_{1}$ satisfy
        \begin{equation*}
            \widetilde{B}^{u}_{1} \cap \widetilde{B}^{s}_{1} = P(\widetilde{B}^{s}_{1} ) \times Q(\widetilde{B}^{u}_{1})
        \end{equation*}
        with $n_{1}$ and $m_{1}$ being the lengths of the codes of $\widetilde{B}^{u}_{1}$ and $\widetilde{B}^{s}_{1}$, respectively.
        Therefore, there exist intervals $J \subset P(Z)$ and $J^{\prime} \subset Q(\widetilde{Z})$ such that
        \begin{align*}
            f^{n_1}(\widetilde{B}^{u}_{1} \cap \widetilde{B}^{s}_{1}) &= J \times [-2,2], \\
            g^{m_1} \circ f^{-1}_{*} (\widetilde{B}^{u}_{1} \cap \widetilde{B}^{s}_{1}) &= [-2,2] \times J^{\prime}.
        \end{align*}

        \begin{figure}[H]
            \centering
            \scalebox{0.17}{\includegraphics{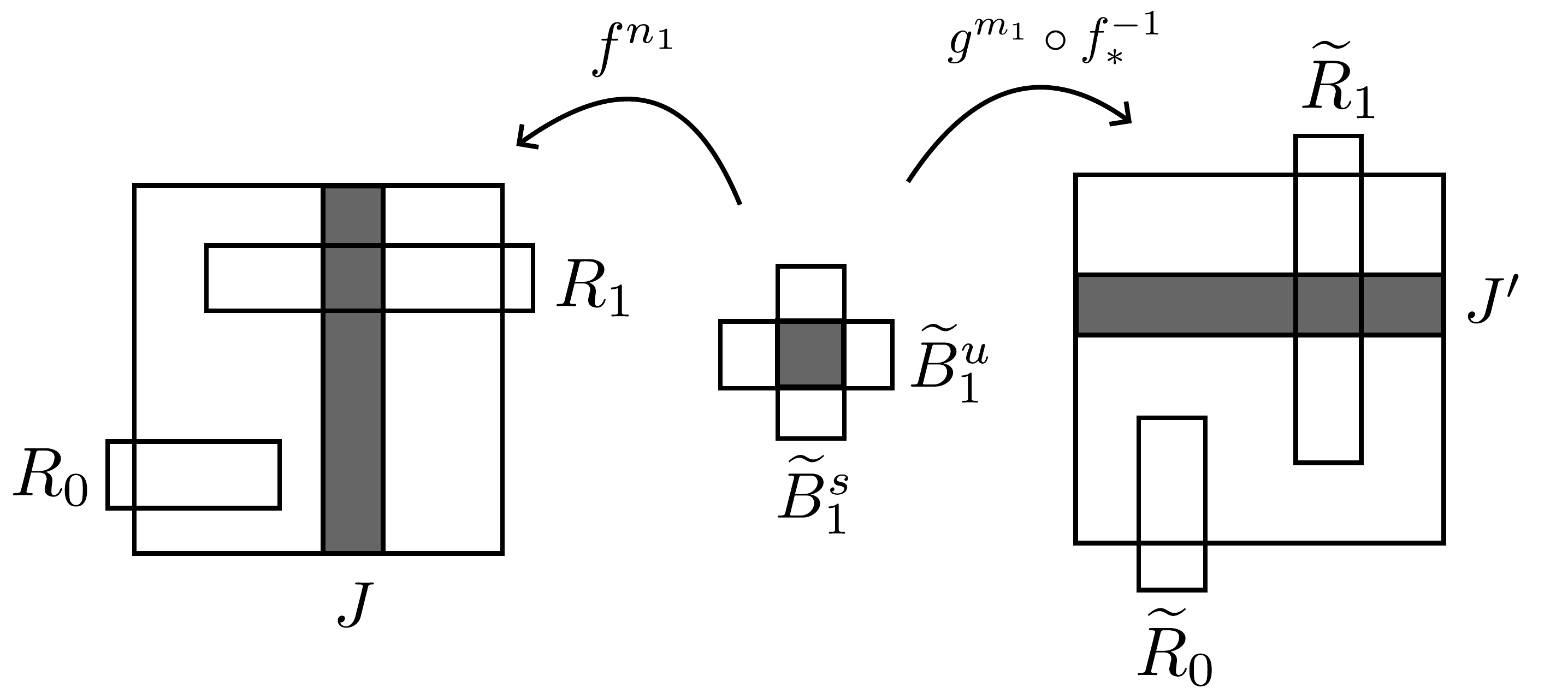}}
            \caption{}
            \label{cl}
        \end{figure}

        Here, the length of the interval $J$ is given by
        \begin{equation*}
            |J| = 4\lambda_{s}^{m_1}\lambda_{cu0}\lambda_{cu1}^{n_{1}-1}\lambda_{*}^{-1}.
        \end{equation*}
        By \eqref{ll_1}, since $\lambda_{s}^{m_{1}}\lambda_{cu0}\lambda_{cu1}^{n_{1}-1} <1$ the length of the interval $J$ can be made smaller than the Lebesgue number of the open covering $(\text{Int}\, (P(R_{i})))_{i=0,1}$ of $P(Z)$ by an appropriate choice of the constant $\lambda_{*}^{-1}$.
        Consequently, there exists $i \in \{0,1\}$ such that
        \begin{equation*}
            J \subset P(R_{i}).
        \end{equation*}
        On the other hand, the length of the interval $J^{\prime}$ satisfies
        \begin{equation*}
            |J^{\prime}| = 4\lambda_{u}^{-n_1}\lambda_{cs0}^{-1}\lambda_{cs1}^{-(m_{1}-1)}\mu_{*}^{-1}.
        \end{equation*}
        By the choice of the constants $\lambda_{u}$ and $\lambda_{cs0}$, we have $\lambda_{u}\lambda_{cs0} <1$.
        Therefore, in the process of constructing a linked pair by extending the codes, the increase in the number of symbols $0$ may cause the length of $J^{\prime}$ to exceed the Lebesgue number of the open covering $(\text{Int}\, (Q(\widetilde{R}_{i})))_{i=0,1}$ of $Q(\widetilde{Z})$.
        To avoid this issue, when constructing a linked pair with respect to the symbol $0$, we increase the code of the $u$-box by two symbols. That is,
        \begin{align*}
            |J^{\prime}_{new}| &= 4\lambda_{u}^{-(n_{1}+2)}\lambda_{cs0}^{-2}\lambda_{cs1}^{-(m_{1}-1)}\mu_{*}^{-1} \\
            &= (\lambda_{u}^{-2}\lambda_{cs0}^{-1})|J^{\prime}|.
        \end{align*}
        By \eqref{claim_con}, we have $\lambda_{u}^{-2}\lambda_{cs0}^{-1}<1$, the length of the interval $J^{\prime}_{new}$ is smaller than the Lebesgue number of the open covering $(\text{Int}\, (Q(\widetilde{R}_{i})))_{i=0,1}$.
        Hence, there exists $j \in \{0,1\}$ such that
        \begin{equation*}
            J^{\prime}_{new} \subset Q(\widetilde{R}_{j}).
        \end{equation*}
        Therefore, there exist $J_{1} \subset P(Z)$ and $J^{\prime}_{1} \subset Q(\widetilde{Z})$ such that
        \begin{align*}
            f^{n_{1}+1}(B^{u}(\underline{a}^{(1)}i) \cap B^{s}(\underline{w}^{(1)}j)) &= J_{1} \times [-2,2],\\
            g^{m_{1}+1}(B^{u}(\underline{a}^{(1)}i) \cap B^{s}(\underline{w}^{(1)}j)) &= [-2,2] \times J^{\prime}_{1}
        \end{align*}
        or
        \begin{align*}
            f^{n_{1}+2}(B^{u}(\underline{a}^{(1)}i_{1}i_{2}) \cap B^{s}(\underline{w}^{(1)}j)) &= J_{1} \times [-2,2],\\
            g^{m_{1}+1}(B^{u}(\underline{a}^{(1)}i_{1}i_{2}) \cap B^{s}(\underline{w}^{(1)}j)) &= [-2,2] \times J^{\prime}_{1}.
        \end{align*}
        Here $i,j \in \{0,1\}$ in the first case, while $i_1,i_2,j \in \{0,1\}$ in the second case.
        Hence, the $u$-box corresponding to $\underline{a}^{(1)}i$ (respectively, $\underline{a}^{(1)}i_1 i_2$) and the $s$-box corresponding to $\underline{w}^{(1)}j$ form a linked pair.
        By this procedure, we can shrink each box so as to satisfy the required inequalities while preserving the linked pair.
    \end{proof}
    As in the case $k=1$, we define
    \begin{equation*}
        \bar{I}_{2} = \qty{ \delta \in \mathbb{R}^{2} \mid (\delta + \bar{B}^{s}_{2}) \cap \bar{B}^{u}_{2} \neq \emptyset }.
    \end{equation*}
    Then, the length of the sides of $\bar{I}_{2}$ are given by
    \begin{equation*}
        |P(\bar{I}_{2})| = |\bar{B}^{u}_{2}| + |P(\bar{B}^{s}_{2})|, \quad |Q(\bar{I}_{2})| = |\bar{B}^{s}_{2}| + |Q(\bar{B}^{u}_{2})|.
    \end{equation*}
    By \eqref{lll}, the length of sides of $\bar{I}_{2}$ satisfy the following inequalities:
    \begin{align*}
        |P(\bar{I}_{2})|, |Q(\bar{I}_{2})| < |\bar{B}^{s}_{2}| + |\bar{B}^{u}_{2}| &< \frac{\xi_{0}}{4}|\widetilde{B}^{s}_{1}| \\
        &< \frac{\lambda_{cs0}^{2}\xi_{0}}{8}\frac{\epsilon}{2}.
    \end{align*}
    In particular, since $\bar{B}^{s}_{2}$ and $\bar{B}^{u}_{2}$ form a linked pair, we have $(0,0) \in \bar{I}_{2}$.
    Therefore, any $\delta_{2} \in \bar{I}_{2}$ satisfies
    \begin{equation*}
        |\delta_{2}| < \frac{\lambda_{cs0}^{2}\xi_{0}}{8}\frac{\epsilon}{2}
    \end{equation*}
    We take the related pairs of $\bar{B}^{s}_{2}$ and $\bar{B}^{u}_{2}$, denoted $\widetilde{B}^{s}_{2}$, $B^{s}_{2}$ and $\widetilde{B}^{u}_{2}$, $B^{u}_{2}$, respectively.
    Then there exists $\delta_{2} \in \mathbb{R}^{2}$ satisfying
    \begin{align*}
        P(\delta_{2} +\widetilde{B}^{s}_{2}) \subset P(\widetilde{B}^{u}_{2}), \quad P(\delta_{2} + B^{s}_{2}) \subset P(B^{u}_{2}), \\
        Q(\widetilde{B}^{u}_{2}) \subset Q(\delta_{2} + \widetilde{B}^{s}_{2}), \quad Q(B^{u}_{2}) \subset Q(\delta_{2} + B^{s}_{2}).
    \end{align*}
    Such a vector $\delta_{2}$ satisfying the inclusions belongs to $\bar{I}_{2}$.
    We then set $\Delta_{2} = \Delta_{1} + \delta_{2}$.

    Assume that for some $k \geq 1$, the following inductive hypotheses (i)--(iv) hold:
    \begin{itemize}
        \item [(i)] We choose a perturbation vector $\delta_{t} \in \bar{I}_{t}$ where
        \begin{equation*}
            \bar{I}_t := \{ \delta \mid (\delta + \bar{B}^{s}_{t}) \cap \bar{B
            }^{u}_{t} \neq  \emptyset \}.
        \end{equation*}
        Moreover, the vector $\delta_{t}$ satisfies
        \begin{equation*}
            |\delta_{t}| < \qty(\frac{\lambda_{cs0}^{2}\xi_{0}}{8})^{t-1}\frac{\epsilon}{2}
        \end{equation*}
        for all $t = 1 \dots k$.
        \item [(ii)] For each $t = 1, \dots k$, the pair $(\Delta_{k} + B^{s}_{t}, B^{u}_{t})$ is a linked $\lambda_{cu0}^{-1}$-proportional pair.
        \item [(iii)] The $s$-box $B^{s}_{t}$ satisfies
        \begin{equation*}
            \frac{\lambda_{cs0}^{3}}{8}|B^{s}_{t-1}| \leq |B^{s}_{t}| \leq \frac{\lambda_{cs0}^{2}}{8}|B^{s}_{t-1}|.
        \end{equation*}
        for all $t = 1, \dots k$.
        \item [(iv)] Let $\widetilde{B}^{u}_{k}$ be the related $u$-box of $B^{u}_{k}$, and let $\widetilde{B}^{s}_{k}$ be the related $s$-box of $B^{s}_{k}$.
        Then, under the perturbation $\Delta_{k} = \delta_{1} + \delta_{2} + \cdots + \delta_{k}$, the pair $(\Delta_{k} + \widetilde{B}^{s}_{k}, \widetilde{B}^{u}_{k})$ and $(\Delta_{k} + B^{s}_{k}, B^{u}_{k})$ are linked $\lambda_{cu0}^{-1}$-proportional pairs.
    \end{itemize}
    We will show that the same properties also hold for $k+1$.
    Since $(\Delta_{k} + \widetilde{B}^{s}_{k}, \widetilde{B}^{u}_{k})$ is a linked pair.
    By claim \ref{claim}, there exists a linked pair consisting of a $u$-box $\bar{B}^{u}_{k+1}$ and an $s$-box $\bar{B}^{s}_{k+1}$ satisfying the following the inequality.
    \begin{equation}\label{ll_2}
        \frac{\lambda_{cs0}^{2}\xi_{0}}{8}|\widetilde{B}^{s}_{k}| \leq |\bar{B}^{s}_{k+1}| \leq \frac{\lambda_{cs0}\xi_{0}}{8}|\widetilde{B}^{s}_{k}| , \quad
        \lambda_{cu0}^{-1}|\bar{B}^{u}_{k+1}| < |\bar{B}^{s}_{k+1}| \leq |\bar{B}^{u}_{k+1}|.
    \end{equation}
    Let $\bar{I}_{k+1}$
    \begin{equation*}
        \bar{I}_{k+1} = \qty{\delta \in \mathbb{R}^{2} \mid (\delta + \bar{B}^{s}_{k+1}) \cap \bar{B}^{u}_{k+1} \neq \emptyset }.
    \end{equation*}
    Each side length of $\bar{I}_{k+1}$ is given by
    \begin{equation*}
        |P(\bar{I}_{k+1})| = |\bar{B}^{u}_{k+1}| + |P(\bar{B}^{s}_{k+1})|, \ |Q(\bar{I}_{k+1})| = |\bar{B}^{s}_{k+1}| + |Q(\bar{B}^{u}_{k+1})|.
    \end{equation*}
    By \eqref{ll_2}, the side lengths of $\bar{I}_{k+1}$ satisfy
    \begin{align*}
        |P(\bar{I}_{k+1})|, |Q(\bar{I}_{k+1})| &< |\bar{B}^{u}_{k+1}| + |\bar{B}^{s}_{k+1}|\\
        &< \frac{\xi_{0}}{4}|\widetilde{B}^{s}_{k}|\\
        &<  \qty(\frac{\lambda_{cs0}^{2}\xi_{0}}{8})^{k}\frac{\epsilon}{2}.
    \end{align*}
    Since $\bar{B}^{s}_{k+1}$ and $\bar{B}^{u}_{k+1}$ form a linked pair, we have $(0,0) \in \bar{I}_{k+1}$. The vector $\delta_{k+1} \in \bar{I}_{k+1}$ satisfies
    \begin{equation*}
        |\delta_{k+1}| < \qty(\frac{\lambda_{cs0}^{2}\xi_{0}}{8})^{k}\frac{\epsilon}{2}.
    \end{equation*}
    Under the perturbation $\Delta_{k+1} = \Delta_{k} + \delta_{k+1}$, the pairs $(\Delta_{k+1} +\widetilde{B}^{s}_{k+1}, \widetilde{B}^{u}_{k+1})$ and $(\Delta_{k+1} +B^{s}_{k+1}, B^{u}_{k+1})$ become linked $\lambda_{cu0}^{-1}$-proportional pairs.
    By the inductive hypothesis, for each $t = 1, \dots k$, the pair $(\Delta_{t} + B^{s}_{t}, B^{u}_{t})$ forms a linked pair.
    Moreover, by the construction of the $u$-boxes and $s$-boxes inside the intersections $\widetilde{R}^{s}_{t} \cap R^{s}_{t}$ and $\widetilde{R}^{u}_{t} \cap R^{u}_{t}$ as in \eqref{lin_per}, the pairs $(\Delta_{t} + B^{s}_{t}, B^{u}_{t})$ admit perturbations of size at least $|\bar{B}^{s}_{t}|/2$.
    By condition (i), we obtain the following inequality.
    \begin{align*}
        |\Delta_{k+1} - \Delta_{t}| \leq \sum_{j=t+1}^{k+1}|\delta_{j}|
        &< |\bar{B}^{s}_{t}|\qty(\frac{\lambda_{cs0}\xi_{0}}{4} \cdot \frac{\lambda_{cs0}^{2}\xi_{0}}{8} + \dots \frac{\lambda_{cs0}\xi_{0}}{4} \qty(\frac{\lambda_{cs0}^{2}\xi_{0}}{8})^{k-t+1}), \\
        &< \frac{|\bar{B}^{s}_{t}|}{2}.
    \end{align*}
    Since $\Delta_{k+1} = \Delta_{t} + (\Delta_{k+1} - \Delta_{t})$ and $|\Delta_{k+1} - \Delta_{t}| < |\bar{B}^{s}_{t}|/2$, for each $t = 1, \dots k+1$, the pair $(\Delta_{k+1} + B^{s}_{t}, B^{u}_{t})$ forms a linked $\lambda_{cu0}^{-1}$-proportional pair.
    Finally, define
    \begin{equation*}
        \Delta = \lim_{k \to \infty}\Delta_{k}= \sum_{k=1}^{\infty} \delta_{k}.
    \end{equation*}
    Then
    \begin{equation*}
        |\Delta| < \sum_{k=1}^{\infty} |\delta_{k}| < \frac{\epsilon}{2}\sum_{k=1}^{\infty}\qty(\frac{\lambda_{cs0}^{2}\xi_{0}}{8})^{k-1} < \epsilon,
    \end{equation*}
    and hence (1) and (2) follows.

    For (3), condition (ii) in the construction of the sequence of $s$-boxes implies that
    \begin{equation}\label{lgl_1}
        |B^{s}_{k+1}| \geq \frac{\lambda_{cs0}^{3}\xi_{0}}{8}|B^{s}_{k}|.
    \end{equation}
    Since $\lambda_{s} < \lambda_{cs0} < \lambda_{cs1}<1$, inequality \eqref{lgl_1} yields
    \begin{equation*}
        \frac{\lambda_{cs0}^{3}\xi_{0}}{8} < \lambda_{cs1}^{|\underline{w}^{(k+1)}|-|\underline{w}^{(k)}|}.
    \end{equation*}
    Therefore,
    \begin{equation*}
        |\underline{w}^{(k+1)}| < |\underline{w}^{(k)}| + \frac{\log(\lambda_{cs0}^{3}\xi_{0}/8)^{-1}}{\log \lambda_{cs1}^{-1}}.
    \end{equation*}
    Let $N_{s}$ be the smallest integer satisfying
    \begin{equation*}
        N_{s} \geq \frac{\log(\lambda_{cs0}^{3}\xi_{0}/8)^{-1}}{\log \lambda_{cs1}^{-1}}.
    \end{equation*}
    Then the desired estimate follows.
    Furthermore, since the $s$-box $\bar{B}^{s}_{k+1}$ and the $u$-box $\bar{B}^{u}_{k+1}$ are proportional by \eqref{ll_2}, the size estimate for $\bar{B}^{s}_{k+1}$ in \eqref{ll_2} implies that
    \begin{equation*}
        |B^{u}_{k+1}| \geq |B^{s}_{k+1}| \geq \frac{\lambda_{cs0}^{3}\xi_{0}}{8}|B^{s}_{k}| > \frac{\lambda_{cs0}^{3}\lambda_{cu0}^{-1}\xi_{0}}{8}|B^{u}_{k}|.
    \end{equation*}
    Hence,
    \begin{equation*}
        |\underline{a}^{(k+1)}| < |\underline{a}^{(k)}| + \frac{\log (\lambda_{cs0}^{3}\lambda_{cu0}^{-1}\xi_{0}/8)^{-1}}{\log \lambda_{cu1}}.
    \end{equation*}
    Let $N_{u}$ be the smallest integer satisfying
    \begin{equation*}
        N_{u} \geq \frac{\log (\lambda_{cs0}^{3}\lambda_{cu0}^{-1}\xi_{0}/8)^{-1}}{\log \lambda_{cu1}}.
    \end{equation*}
    This complete the proof of the lemma.
\end{proof}

\begin{prop}\label{critical}
    For every $L \gg N_{s}$ there exist a sequence $\underline{\Delta}(L) = (\Delta_{k})_{k \geq 1}$ with $|\Delta_{1}|=0$ and $|\Delta_{k}|< \lambda_{s}^{Lk}$ such that the following hold.
    For every codes $\underline{u}^{(k)}$ $(k = 1,2,\dots)$, there exists a code $\underline{\gamma}^{(k)}$ $(k=1,2,\dots)$ such that for each $k\geq 1$, let $\widehat{B}^{s}_{k+1} \subset B^{s}_{k+1}$ and $\widehat{B}^{u}_{k} \subset B^{u}_{k}$ denote the $s$-box and $u$-box determined by the code $\underline{\gamma}^{(k)}$.
    Then the center $\Delta_{k+1} + \widehat{B}^{s}_{k+1}$ coincides with the center of $\widehat{B}^{u}_{k+1}$.
    Moreover, each code $\underline{\gamma}^{(k)}$ can be written in the form
    \begin{equation*}
        \underline{\gamma}^{(k)} = \underline{\alpha}^{(k)} \underline{u}^{(k)} [\underline{\omega}^{(k+1)}]^{-1}
    \end{equation*}
    where $\underline{\alpha}^{(k)}$ and $\underline{\omega}^{(k+1)}$ satisfy
    \begin{equation*}
        |\underline{\alpha}^{(k)}| + |\underline{\omega}^{(k+1)}| \leq CLk,
    \end{equation*}
    with a constant $C>0$ that is independent of $k$.
\end{prop}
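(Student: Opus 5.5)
Our plan is to apply Lemma \ref{ll} at a sequence of scales and then refine the linked pairs it produces until the centers coincide exactly. First we fix $L \gg N_s$ and take the linked $\lambda_{cu0}^{-1}$-proportional pairs $(\Delta + B^s_k, B^u_k)$ given by Lemma \ref{ll}, with codes $\underline{a}^{(k)}$ and $\underline{w}^{(k)}$. By Lemma \ref{ll}(3), $|\underline{a}^{(k)}| \le |\underline{a}^{(1)}| + N_u k$ and $|\underline{w}^{(k)}| \le |\underline{w}^{(1)}| + N_s k$. We then set $\underline{\alpha}^{(k)}$ to be $\underline{a}^{(k)}$ followed by a block of length of order $Lk$, and $\underline{\omega}^{(k+1)}$ to be $\underline{w}^{(k+1)}$ followed by a block of length of order $L(k+1)$. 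This immediately gives $|\underline{\alpha}^{(k)}| + |\underline{\omega}^{(k+1)}| \le CLk$ with $C$ depending only on $N_u$, $N_s$, the initial lengths, and the expansion rates.

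The extension blocks are chosen so that the refined boxes still form linked pairs. We iterate the mechanism of Claim \ref{claim}, each time adding one symbol to the $s$-code and one or two symbols to the $u$-code, for about $Lk$ steps. Because every step shrinks the boxes by a factor bounded between $\lambda_{s}$-type and $\lambda_{cs0}^2$-type constants, after roughly $Lk$ steps the refined $s$-box $\widehat{B}^s_{k+1}$ and $u$-box $\widehat{B}^u_{k+1}$ have diameters below $\lambda_s^{L(k+1)}/2$. They remain linked up to the accumulated shift $\Delta$. Because the pair is linked, the rectangles cross: the $P$-projection of the $s$-box lies inside that of the $u$-box, and the $Q$-projection of the $u$-box lies inside that of the $s$-box. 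Hence the centers differ by at most the maximal side length of the smaller boxes. We then define $\Delta_{k+1}$ to be exactly the vector translating the center of $\widehat{B}^s_{k+1}$ onto the center of $\widehat{B}^u_{k+1}$, which gives $|\Delta_{k+1}| < \lambda_s^{L(k+1)}$. For $k=1$ there is nothing to match, so we take $\Delta_1 = 0$.

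The free middle code $\underline{u}^{(k)}$ is inserted between $\underline{\alpha}^{(k)}$ and $[\underline{\omega}^{(k+1)}]^{-1}$. The $u$-box of the concatenated code $\underline{\gamma}^{(k)}$ is a sub-box of $B^u(\underline{\alpha}^{(k)})$, and its image under $F^{|\underline{\gamma}^{(k)}|}$ lands in the $s$-box of $\underline{\omega}^{(k+1)}$. So the boxes determined by $\underline{\gamma}^{(k)}$ are the ones whose centers were matched, provided the construction is arranged so that the matching depends only on $\underline{\alpha}$ and $\underline{\omega}$. We would justify this with the affine product structure of $F$. Affinity means centers map to centers, as noted before Theorem \ref{main2}, so inserting $\underline{u}^{(k)}$ only shrinks the box in directions transverse to the matching plane and does not move the relevant center coordinates in $x_u, y_u$.

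We expect the main obstacle to be the independence from $\underline{u}^{(k)}$, together with making the matching exact rather than approximate. If the insertion of $\underline{u}^{(k)}$ does shift centers, we would absorb that shift into the $k$-dependent $\Delta_{k+1}$. This is legitimate because the shift is bounded by the size of the refined boxes, which is at most $\lambda_s^{L(k+1)}$. A second, routine point is to check that consecutive extensions are compatible with the pairwise disjointness in Lemma \ref{ll}(1). Having $L \gg N_s$ ensures that the $\lambda_s^{L(k+1)}$-scale adjustments stay well inside the separation $\lambda_s^{n_k}(1-2\lambda_s)$ between distinct $s$-boxes.
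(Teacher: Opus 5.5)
Your route is essentially the paper's. Both refine the pairs of Lemma \ref{ll} via Claim \ref{claim} to linked boxes of size comparable to $\lambda_s^{Lk}$ with codes $\underline{\alpha}^{(k)}$ and $\underline{\omega}^{(k)}$, insert the free block, and take $\Delta_{k+1}$ to be the difference of the centers of $\widehat{B}^{u}_{k+1}$ and $\widehat{B}^{s}_{k+1}$; this difference is small because both boxes lie inside the small linked pair $(B^{s}(\underline{\omega}^{(k+1)}),B^{u}(\underline{\alpha}^{(k+1)}))$.

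One correction is needed: your affine-structure argument is false. Appending $\underline{u}^{(k)}$ refines and moves the $s$-box of $[\underline{\gamma}^{(k)}]^{-1}$ within the $x_uy_u$-plane itself, because $f_*$ carries both $x_s$ and $y_s$ into that plane. Likewise, appending $\underline{u}^{(k+1)}$ moves the $u$-box of $\underline{\gamma}^{(k+1)}$ within that plane, because $f$ expands both $x_u$ and $y_u$. So your fallback, in which $\Delta_{k+1}$ depends on $\underline{u}^{(k)}$ and $\underline{u}^{(k+1)}$, is not optional but is the actual proof, exactly as in the paper.
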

\begin{proof}
    By claim \ref{claim} in the proof of lemma \ref{ll}, for each $k\geq 1$ there exist an $s$-box $\check{B}^{s}_{k} \subset B^{s}_{k}$ and a $u$-box $\check{B}^{u}_{k} \subset B^{u}_{k}$ such that the pair $(\check{B}^{s}_{k}, \check{B}^{u}_{k})$ is a linked pair and satisfies
    \begin{equation}\label{cc1}
        \frac{\lambda_{cs0}^{2}}{2}\lambda_{s}^{Lk} \leq |\check{B}^{s}_{k}| < \frac{\lambda_{cs0}}{2}\lambda_{s}^{Lk}, \quad \lambda_{cu0}^{-1}|\check{B}^{u}_{k}| < |\check{B}^{s}_{k}| \leq |\check{B}^{u}_{k}|.
    \end{equation}
    Define
    \begin{equation*}
        \check{I}_{k} = \qty{ \Delta_{k} \in \mathbb{R}^{2} \mid \qty(\Delta_{k} + \check{B}^{s}_{k}) \cap \check{B}^{u}_{k} \neq \emptyset}.
    \end{equation*}
    By the same argument as in lemma \eqref{ll}, the side length of $\check{I}_{k}$ satisfy
    \begin{equation*}
        |P(\check{I}_{k})|, |Q(\check{I}_{k})| < |\check{B}^{s}_{k}| + |\check{B}^{u}_{k}| < \lambda_{s}^{Lk}
    \end{equation*}
    Since $(\check{B}^{s}_{k}, \check{B}^{u}_{k})$ is a linked pair, the size estimates in \eqref{cc1} imply that the vector $\Delta_{k} \in \check{I}_{k}$ is bounded above by $\lambda_{s}^{Lk}$.
    Let $\underline{\alpha}^{(k)}$ and $\underline{\omega}^{(k)}$ denote the codes of $\check{B}^{u}_{k}$ and $\check{B}^{s}_{k}$, respectively.
    For an arbitrary code $\underline{u}^{(k)}$, define
    \begin{equation*}
        \underline{\gamma}^{(k)} = \underline{\alpha}^{(k)} \underline{u}^{(k)} [\underline{\omega}^{(k+1)}]^{-1}.
    \end{equation*}
    Then, by the definition of the $u$- and $s$-box, we obtain
    \begin{align*}
        \widehat{B}^{u}_{k} = B^{u}(\underline{\gamma}^{(k)}) \subset \check{B}^{u}_{k}, \quad \widehat{B}^{s}_{k+1} = B^{s}([\underline{\gamma}^{(k)}]^{-1}) \subset \check{B}^{s}_{k+1}.
    \end{align*}
    Finally, by choosing $\Delta_{k+1}$ so that the centers of the $u$-box $\widehat{B}^{u}_{k+1}$ and the $s$-box $\widehat{B}^{s}_{k+1}$ coincide, we obtain the desired property.
    Moreover, using \eqref{cc1} together with the relations among the constants in \eqref{2_con1}, the same computation as in lemma \ref{ll}, we obtain
    \begin{align*}
        |\underline{\alpha}^{(k)}| &< \frac{\log(\lambda_{cs0}^{2}/2)^{-1}}{\log \lambda_{cu1}} + \frac{\log \lambda_{s}^{-1}}{\log \lambda_{cu1}}Lk, \\
        |\underline{\omega}^{(k+1)}| &< \frac{\log(\lambda_{cs0}^{2}/2)^{-1}}{\log \lambda_{cs1}^{-1}} + \frac{\log \lambda_{s}^{-1}}{\log \lambda_{cs1}^{-1}}Lk + \frac{\log \lambda_{s}^{-1}}{\log \lambda_{cs1}^{-1}}.
    \end{align*}
    Since $\lambda_{cu1}\lambda_{cs1} < 1$, if we take
    \begin{equation*}
        C > \max \qty{\frac{\log(\lambda_{cs0}^{2}/2)^{-1}}{\log \lambda_{cu1}}, \frac{\log \lambda_{s}^{-1}}{\log \lambda_{cu1}}}.
    \end{equation*}
    large enough, then
    \begin{equation*}
        |\underline{\alpha}^{(k)}| + |\underline{\omega}^{(k+1)}| < CLk.
    \end{equation*}
    holds for every $k$.
    This completes the proof of the proposition.
\end{proof}

Theorem \ref{main2} follows from lemma \ref{ll} and proposition \ref{critical}.

\section{Proof of Main Theorem}\label{proof}
In this section, we describe the proof of theorem \ref{main} by using the codes $\underline{\gamma}^{(k)}$ constructed in theorem \ref{main2}.

\subsection{Perturbation}\label{per}
We now apply sufficiently small perturbations near homoclinic tangencies, we construct a diffeomorphism that is $C^{r}$-close to $F$ and exhibits the statistical behavior in theorem \ref{main}.
Here, we denote by $\mathbb{V}_{*}^{c}$ the complement of $\mathbb{V}_{*}$ defined in \eqref{tan_dom}.

\begin{prop}
    For any $\epsilon>0$, there exists a diffeomorphism $G$ contained in the $\epsilon$-neighborhood $\mathcal{U}$ of $F$ in the $C^{r}$-topology $(1 \leq r < \infty)$ such that the following properties hold:
    \begin{itemize}
        \item [\rm{(1)}] The restriction of $G$ to $\mathbb{V}_{*}^{c}$ coincides with $F$.
        \item [\rm{(2)}] Fix a constant $0 < \delta < 1-2\lambda_{u}^{-1}$, and let $\Delta_{k} = (t_{k}, \widetilde{t}_{k}, 0, 0)$ be the vector obtained in proposition \ref{critical}.
        Then, for every $(x_{u}, y_{u}, x_{s}, y_{s}) \in [ -\delta,  \delta]^{2} \times B^{s}_{k+1}$, we have
        \begin{equation*}
            G(x_{u}, y_{u}, x_{s},y_{s}) = \Delta_{k+1} + F(x_{u},y_{u},x_{s},y_{s}).
        \end{equation*}
    \end{itemize}
\end{prop}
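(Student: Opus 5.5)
The plan is to build $G$ as $F$ composed with a sum of localized translation bumps supported near the tangency. Precisely, $G = T\circ F$ on $\mathbb{V}_*$, where $T$ is close to the identity and $G=F$ elsewhere. For each $k\ge 1$ I choose a $C^\infty$ bump $\rho_k\colon\mathbb{R}^2\to[0,1]$ in the variables $(x_s,y_s)$ with $\rho_k\equiv1$ on $B^s_{k+1}$. The support of $\rho_k$ lies in the $\eta_k$-neighbourhood of $B^s_{k+1}$, where $\eta_k$ is comparable to the distance from $B^s_{k+1}$ to the other $s$-boxes $B^s_j$. I also take a fixed bump $\chi(x_u,y_u)$ equal to $1$ on $[-\delta,\delta]^2$ and supported in the interior of the projection of $\mathbb{V}_*$ (shrinking $\delta$ slightly if needed). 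Then I define on $\mathbb{V}_*$
\[
G(x_u,y_u,x_s,y_s)=F(x_u,y_u,x_s,y_s)+\chi(x_u,y_u)\sum_{k\ge1}\rho_k(x_s,y_s)\,\Delta_{k+1},
\]
and set $G=F$ off $\mathbb{V}_*$. Because $\chi$ vanishes near $\partial\mathbb{V}_*$, the map is smooth across the boundary, so (1) holds. Property (2) holds because the supports of the $\rho_k$ are pairwise disjoint: Lemma \ref{ll}(1) gives disjoint $s$-boxes, so on $[-\delta,\delta]^2\times B^s_{k+1}$ only the $k$-th term is present and equals $\Delta_{k+1}$.

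The key step is the $C^r$ estimate. Scaling gives $\|\rho_k\|_{C^r}\lesssim \eta_k^{-r}$. By Lemma \ref{ll}(3) and the remark after Theorem \ref{main2}, the separation between $B^s_{k+1}$ and the neighbouring $s$-boxes is at least of order $\lambda_s^{N_s(k+1)}$, so I may take $\eta_k\gtrsim\lambda_s^{N_s(k+1)}$. Proposition \ref{critical} gives $|\Delta_{k+1}|<\lambda_s^{L(k+1)}$. The $k$-th term therefore has $C^r$ norm at most
\[
C_\chi\,\lambda_s^{L(k+1)}\lambda_s^{-rN_s(k+1)}=C_\chi\,\lambda_s^{(L-rN_s)(k+1)}.
\]
This is summable and tends to $0$ uniformly in $k$ once $L>rN_s$. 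This is exactly the freedom ``$L\gg N_s$'' in Proposition \ref{critical}, and it is why we need $r<\infty$. Since the supports are disjoint, the $C^r$ norm of the whole sum is the supremum of the individual norms, and it can be made smaller than $\epsilon$ by taking $L$ large. A tail argument then shows that the infinite sum converges in $C^r$, including near the accumulation points of the boxes.

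Finally I check that $G$ is still a diffeomorphism. The perturbation is $C^1$-small and supported in $\mathbb{V}_*$, and $F(\mathbb{V}_*)$ stays at positive distance from $F(\mathbb{V}_0)\sqcup F(\mathbb{V}_1)$. Hence $G|_{\mathbb{V}_*}$ is a $C^1$-small perturbation of an embedding, and it remains an embedding with image disjoint from the other images. We then extend $G$ exactly as $F$ was extended, or simply use $G=F$ off $\mathbb{V}_*$ together with the fact that a $C^1$-small compactly supported perturbation of a diffeomorphism is a diffeomorphism.

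The main obstacle I expect is making the separation claim rigorous. The estimate needs a uniform lower bound on the distance from $B^s_{k+1}$ to all other $B^s_j$, not only to its sibling. I would get this from the nested structure of Lemma \ref{ll}: the box $B^s_{j}$ for $j>k+1$ lies inside $\widetilde B^s_{k+1}$, whose gap from $B^s_{k+1}$ is about $\lambda_s^{n_{k+1}}(1-2\lambda_s)$. For $j<k+1$, the box $B^s_j$ is separated from the whole of $\widetilde B^s_j\supset B^s_{k+1}$ by an even larger gap.
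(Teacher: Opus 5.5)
Your proposal is correct and follows essentially the same route as the paper. The paper writes $G = F\circ h_{\underline{\Delta}(L)}$, where $h_{\underline{\Delta}(L)}$ shifts $(x_s,y_s)$ by $(\lambda_* t_{k+1}, \mu_*^{-1}\widetilde t_{k+1})$ under bump functions in $x_u$, $y_u$, $x_s$. Because $F|_{\mathbb{V}_*}$ is affine in $(x_s,y_s)$, this coincides with your additive form $F+\chi\sum_k\rho_k\Delta_{k+1}$, and the paper gets its $C^r$ bound from the same comparison of $\lambda_s^{L(k+1)}$ against $(\lambda_s^{N_s k})^{-r}$. On several points the paper passes over, your treatment is more careful: disjointness of supports via the nesting of the boxes, keeping the support inside $\mathbb{V}_*$ by shrinking $\delta$ slightly, convergence at accumulation points, and the diffeomorphism property.
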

\begin{proof}
    Let $\boldsymbol{b} \colon \mathbb{R} \rightarrow \mathbb{R}$ be a non-negative, non-decreasing $C^{r}$-function such that
    \begin{equation*}
        \boldsymbol{b}(x) =
        \begin{cases}
            0 & x \leq -1 ,\\
            1 & x \geq 0.
        \end{cases}
    \end{equation*}
    For an interval $I=[a,b]$, we define the bump function $\boldsymbol{b}_{\rho,I}$ by
    \begin{equation*}
        \boldsymbol{b}_{\rho,I} = \boldsymbol{b}\qty(\frac{x-a}{\rho|I|}) + \boldsymbol{b}\qty(-\frac{x-b}{\rho|I|}) -1
    \end{equation*}
    where $|I| = b-a$ denotes the length of $I$.
    If $\rho|I| \leq 1$, then
    \begin{equation}\label{per_1}
        \|\boldsymbol{b}_{\rho,I}\|_{C^{r}} \leq \frac{1}{(\rho|I|)^{r}} \|\boldsymbol{b}\|_{C^{r}}.
    \end{equation}
    Here, $\|\cdot\|_{C^{r}}$ denotes the supremum norm of derivatives up to order $r$.
    We next define bump functions corresponding to each variables as follows:
    \begin{equation}\label{bump}
        b_{cu} = \boldsymbol{b}_{1/4,[-\delta,\delta]},\ b_{u} = \boldsymbol{b}_{1/4,[-\delta,\delta]},\ b_{s,k} = \boldsymbol{b}_{1/3\tau^{s},P(B^{s}_{k})},
    \end{equation}
    where $\tau^{s} = \tfrac{2\lambda_{s}}{1-2\lambda_{s}}$.
    For the functions in \eqref{bump}, the inequality \eqref{per_1} provides the following estimates:
     \begin{align*}
        \|b_{cu}\|_{C^{r}} \leq \qty(\frac{2}{\delta})^{r} \|\boldsymbol{b}\|_{C^{r}}, \ \|b_{u}\|_{C^{r}} \leq \qty(\frac{2}{\delta})^{r} \|\boldsymbol{b}\|_{C^{r}} ,\ \|b_{s,k}\|_{C^{r}} \leq \qty(\frac{3\tau^{s}}{|P(B^{s}_{k})|})^{r}\|\boldsymbol{b}\|_{C^{r}}.
    \end{align*}
    For each $k \geq 1$, the vector $\Delta_{k+1}$ depends on $L>0$, and its norm is bounded from above by proposition \ref{critical}.
    We set
    \begin{equation*}
        \underline{\Delta}(L) = (\Delta_{k})_{k \geq 1}.
    \end{equation*}
    We then define a map $h_{\underline{\Delta}(L)} \colon \mathbb{R}^{4} \rightarrow \mathbb{R}^{4}$ by
    \begin{align*}
        h_{\underline{\Delta}(L)} (x_{u},y_{u}, x_{s},y_{s}) = \biggl(&x_{u},\  y_{u}, \ x_{s} + \lambda_{*}b_{cu}(x_{u}) b_{u}(y_{u}) \sum_{k=1}^{\infty} t_{k+1} b_{s,k+1}(x_{s}),\\
        &y_{s} + \mu_{*}^{-1} b_{cu}(x_{u}) b_{u}(y_{u}) \sum_{k=1}^{\infty} \widetilde{t}_{k+1} b_{s,k+1}(x_{s})\biggr)
    \end{align*}
    for $(x_{u},y_{u}, x_{s},,y_{s}) \in [-\delta,  \delta]^{2} \times \cup_{k\geq 1}B^{s}_{k+1}$.
    Let $id \colon \mathbb{R}^{4} \to \mathbb{R}^{4}$ denote the identity map. Then we estimate
  \begin{align*}
        &\|h_{\underline{\Delta}(L)} - id \|_{C^{r}} \\
        &=\bigg\| \lambda_{*} b_{cu}(x_{u}) b_{u}(y_{u}) \sum_{k=1}^{\infty} t_{k+1} b_{s,k+1}(x_{s}) + \mu_{*}^{-1}b_{cu}(x_{u}) b_{u}(y_{u}) \sum_{k=1}^{\infty} \widetilde{t}_{k+1} b_{s,k+1}(x_{s})\bigg\|_{C^{r}} \\
        &\leq  \lambda_{*}\qty(\frac{12\tau^{s}}{\delta^{2}})^{r} \|\boldsymbol{b}\|_{C^{r}} \sum_{k=1}^{\infty} \frac{|t_{k+1}|}{|P(B^{s}_{k+1})|^{r}} +  \mu_{*}^{-1}\qty(\frac{12\tau^{s}}{\delta^{2}})^{r} \|\boldsymbol{b}\|_{C^{r}} \sum_{k=1}^{\infty} \frac{|\widetilde{t}_{k+1}|}{|P(B^{s}_{k+1})|^{r}}.
  \end{align*}
  By proposition \ref{critical}, we have $|t_{k+1}| < \lambda_{s}^{L(k+1)}$. Taking $L>0$ sufficiently large so that $\lambda_{s}^{L} < \lambda_{s}^{N_{s}r}$, it follows that
  \begin{equation}\label{4_per_t}
      \begin{split}
          \sum_{k=1}^{\infty}\frac{|t_{k+1}|}{|P(B^{s}_{k+1})|^{r}} &\leq \sum_{k=1}^{\infty} \frac{\lambda_{s}^{L(k+1)}}{(\lambda_{s}^{N_{s}k})^{r}} \\
          &= \frac{\lambda_{s}^{L}}{(\lambda_{s}^{N_{s}})^{r}} \sum_{k=1}^{\infty} \qty(\frac{\lambda_{s}^{L}}{\lambda_{s}^{N_{s}r}})^{k} \\
          &= \frac{\lambda_{s}^{L}}{(\lambda_{s}^{N_{s}})^{r}} \cdot \frac{\lambda_{s}^{L}}{\lambda_{s}^{N_{s}r}- \lambda_{s}^{L}}.
      \end{split}
  \end{equation}
  Similarly,
  \begin{align}\label{4_per_t2}
      \sum_{k=1}^{\infty} \frac{|\widetilde{t}_{k+1}|}{|P(B^{s}_{k+1})|^{r}} &\leq \frac{\lambda_{s}^{L}}{2( \lambda_{s}^{N_{s}})^{r}} \cdot \frac{\lambda_{s}^{L}}{\lambda_{s}^{N_{s}r} - \lambda_{s}^{L}}.
  \end{align}
  Hence, by taking $L>0$ sufficiently large, both \eqref{4_per_t} and \eqref{4_per_t2} converge to zero.
  Therefore $h_{\underline{\Delta}(L)}$ is $C^{r}$-close to the identity map $id$.
  Using the map $h_{\underline{\Delta}(L)}$ defined above, we set
  \begin{equation}\label{per_g}
      G = F \circ h_{\underline{\Delta}(L)}.
  \end{equation}
  Then, for $L>0$ sufficiently large, the map $G$ is $C^{r}$-close to $F$.
  By construction, the restriction $h_{\underline{\Delta}(L)}$ to $\mathbb{V}_{*}^{c}$ coincided with the identity map $id$.
  Moreover, for any $(x_{u}, y_{u}, x_{s}, y_{s}) \in [-\delta,  \delta]^{2} \times B^{s}_{k+1}$,
  \begin{align*}
      G(x_{u}, y_{u}, x_{s}, y_{s}) &= F \circ h_{\underline{\Delta}(L)}(x_{u}, y_{u}, x_{s}, y_{s}) \\
      &= F(x_{u},y_{u}, x_{s} +  \lambda_{*}t_{k+1}, y_{s} + \mu_{*}^{-1}\widetilde{t}_{k+1}) \\
      &= (t_{k+1}, \widetilde{t}_{k+1}, 0, 0) + F(x_{u}, y_{u}, x_{s}, y_{s}).
  \end{align*}
  This completes the proof.
\end{proof}

\subsection{Code conditions for wandering domains}\label{con_wan}
To construct the wandering domain, we use the conditions introduced in \cite{CV2001, KNS2023}.
The codes obtained in proposition \ref{critical} have the form
\begin{equation*}
    \underline{\gamma}^{(k)} = \underline{\alpha}^{(k)} \underline{u}^{(k)} [\underline{\omega}^{(k+1)}]^{-1}.
\end{equation*}
We denote the length of the code $\underline{\gamma}^{(k)}$ by
\begin{equation*}
    \widehat{n}_{k} =|\underline{\gamma}^{(k)}|.
\end{equation*}
By the construction of the code $\underline{\gamma}^{(k)}$ and by Theorem \eqref{critical}we have
\begin{equation*}
  |\underline{\alpha}^{(k)}| + |\underline{\omega}^{(k+1)}| < CLk.
\end{equation*}
Moreover, the sequence $\underline{u}^{(k)}$ is the part that can be chosen freely.
We therefore suppose the following quadratic condition:
\begin{equation}\label{wd_qu}
  |\underline{u}^{(k)}| = k^{2}.
\end{equation}
Under this assumption,
\begin{equation*}
  \frac{\widehat{n}_{k+1}}{\widehat{n}_{k}} = \frac{(k+1)^{2} + \mathcal{O}(k+1)}{k^{2} + \mathcal{O}(k)} \rightarrow 1 \quad (k \rightarrow \infty).
\end{equation*}
Hence, the following lemma holds.

\begin{lem}\label{lem4_1}
    For any $\eta>0$, there exists $k_{0} >0$ such that for all $k \geq k_{0}$,
    \begin{equation}
        \widehat{n}_{k} < \widehat{n}_{k+1} < (1+\eta)\widehat{n}_{k}.
    \end{equation}
\end{lem}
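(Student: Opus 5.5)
We will prove the two inequalities separately, using only the decomposition $\widehat{n}_{k}=|\underline{\alpha}^{(k)}|+|\underline{u}^{(k)}|+|\underline{\omega}^{(k+1)}|$ coming from the form of $\underline{\gamma}^{(k)}$ in Proposition \ref{critical}. We also use the quadratic condition \eqref{wd_qu} and the linear bound $|\underline{\alpha}^{(k)}|+|\underline{\omega}^{(k+1)}|<CLk$. Write $r_{k}:=|\underline{\alpha}^{(k)}|+|\underline{\omega}^{(k+1)}|$, so that
\begin{equation*}
k^{2}\le \widehat{n}_{k}=k^{2}+r_{k}<k^{2}+CLk,
\end{equation*}
where $0\le r_{k}<CLk$ and $C,L$ are fixed and independent of $k$.

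\emph{Upper bound.} We have
\begin{equation*}
\frac{\widehat{n}_{k+1}}{\widehat{n}_{k}}<\frac{(k+1)^{2}+CL(k+1)}{k^{2}},
\end{equation*}
and the right-hand side tends to $1$ as $k\to\infty$. Hence for a given $\eta>0$ we can choose $k_{1}$ such that this quotient is less than $1+\eta$ for all $k\ge k_{1}$. Explicitly, it suffices that $(2+CL)/k+(1+CL)/k^{2}<\eta$.

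\emph{Lower bound.} This is the only point requiring care, since $r_{k}$ is not assumed monotone. We have $\widehat{n}_{k+1}-\widehat{n}_{k}=2k+1+r_{k+1}-r_{k}>2k+1-CLk$, and this crude estimate is not positive when $CL\ge 2$. There are two ways to handle this. The first option is to use Lemma \ref{ll}(3) and the construction in Proposition \ref{critical}: the codes $\underline{\alpha}^{(k)}$ and $\underline{\omega}^{(k)}$ are obtained by nested extensions, so their lengths are strictly increasing in $k$. Then $r_{k+1}\ge r_{k}$, which gives $\widehat{n}_{k+1}-\widehat{n}_{k}\ge 2k+1>0$. If monotonicity is not available in the precise form needed, the second option is to use the freedom in $\underline{u}^{(k)}$. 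We replace \eqref{wd_qu} by $|\underline{u}^{(k)}|=k^{2}+s_{k}$, choosing $0\le s_{k}\le CLk$ so that $|\underline{u}^{(k)}|+r_{k}$ is strictly increasing, for instance $s_{k}=CLk-r_{k}$ rounded to an integer. Then $\widehat{n}_{k}=k^{2}+CLk+O(1)$, which is still quadratic plus linear, so the upper-bound argument goes through unchanged. The asymptotic ratio computation stated before the lemma is consistent with either route.

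Finally, take $k_{0}=k_{1}$ (or the maximum of $k_{1}$ and the index from which monotonicity holds). The main obstacle is exactly the lower bound $\widehat{n}_{k}<\widehat{n}_{k+1}$. I would first try the monotonicity of the code lengths coming from the nested construction in Lemma \ref{ll}, and fall back to adjusting $|\underline{u}^{(k)}|$ as above.
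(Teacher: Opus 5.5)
Your upper bound is the paper's argument. The paper only notes that $\widehat{n}_{k+1}/\widehat{n}_{k}=((k+1)^{2}+O(k))/(k^{2}+O(k))\to 1$ and stops there. That limit gives $\widehat{n}_{k+1}<(1+\eta)\widehat{n}_{k}$ but not $\widehat{n}_{k}<\widehat{n}_{k+1}$. You are right that the lower bound needs an input the paper does not supply.

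Your first option does not provide that input.
\begin{itemize}
\item Lemma \ref{ll}(3) compares $|\underline{a}^{(k)}|$ and $|\underline{w}^{(k)}|$, the codes of $B^{u}_{k}$ and $B^{s}_{k}$. In Proposition \ref{critical}, however, $\underline{\alpha}^{(k)}$ and $\underline{\omega}^{(k)}$ are the codes of sub-boxes $\check{B}^{u}_{k}\subset B^{u}_{k}$ and $\check{B}^{s}_{k}\subset B^{s}_{k}$. These are obtained by further extensions, as in Claim \ref{claim}, until $|\check{B}^{s}_{k}|\asymp\lambda_{s}^{Lk}$.
\item These codes are not nested in $k$, because $B^{s}_{k+1}\subset\widetilde{B}^{s}_{k}$ is disjoint from $B^{s}_{k}$.
\item The size condition fixes only the product of the per-symbol factors. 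Each factor lies between $\lambda_{cs0}$ and $\lambda_{cs1}$ (respectively $\lambda_{cu0}^{-1}$ and $\lambda_{cu1}^{-1}$), and the symbols themselves are dictated by the blender covering argument, not chosen by you.
\item Hence $|\underline{\omega}^{(k)}|$ is only known to lie between roughly $Lk\log\lambda_{s}^{-1}/\log\lambda_{cs0}^{-1}$ and $Lk\log\lambda_{s}^{-1}/\log\lambda_{cs1}^{-1}$, and similarly for $|\underline{\alpha}^{(k)}|$. Nothing prevents $r_{k+1}-r_{k}$ from being of order $-Lk$. For the large $L$ the construction requires, that swamps $2k+1$.
\end{itemize}

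Your second option is sound. Set $|\underline{u}^{(k)}|=k^{2}+\lceil CLk\rceil-r_{k}\geq k^{2}$, which is allowed because Proposition \ref{critical} accepts arbitrary $\underline{u}^{(k)}$. Then $\widehat{n}_{k}=k^{2}+\lceil CLk\rceil$ is strictly increasing, and the ratio still tends to $1$.

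However, this replaces \eqref{wd_qu}. It proves the lemma for a modified choice of lengths, not under $|\underline{u}^{(k)}|=k^{2}$ as stated. Commit to this change explicitly. You then need to restate the Era and Code conditions of Section \ref{sb} with $|\underline{u}^{(k)}|=k^{2}+O(k)$ in place of $k^{2}$; this is harmless for the frequency computations.

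Alternatively, you could drop the lower bound altogether. The later estimates in Lemma \ref{con_pro1} and Theorem \ref{main3} use only the upper bound and $\widehat{n}_{k}\geq k^{2}\to\infty$.
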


Furthermore, in addition to the condition \eqref{wd_qu}, we need to supppose another condition.
Since the sequence $\underline{u}^{(k)}$ can be chosen freely, we may assume that the number of symbols $0$ in $\underline{\gamma}^{(k)}$, denoted by $\widehat{n}_{k(0)}$ is greater than the number of symbols $1$, denoted by $\widehat{n}_{k(1)}$.
That is, we may assume that
\begin{equation}\label{maj}
    \widehat{n}_{k(1)} \leq \widehat{n}_{k(0)}.
\end{equation}
We refer to this as the majority condition.

\subsection{Construction of wandering domains}\label{con_wan2}
In what follows, we construct contracting wandering domains.
Since the estimates in the $x_{u}x_{s}$-plane and in the $y_{u}y_{s}$-plane can be carried out in essentially the same manner, we consider their projections onto each plane separately and proceed with the proof.

For each $k \geq k_{0}$, define
\begin{equation*}
  b_{k} = \lambda_{cu0}^{-\sum_{i=0}^{\infty}\widehat{n}_{k+i(0)}/2^{i}}\lambda_{cu1}^{-\sum_{i=0}^{\infty}\widehat{n}_{k+i(1)}/2^{i}}, \quad \bar{b}_{k} = a_{1}^{-1}\lambda^{-\sum_{i=0}^{\infty}\widehat{n}_{k+i}/2^{i}}_{u}.
\end{equation*}
Then the following relations hold:
\begin{equation}\label{4_con1}
(\lambda_{cu0}^{\widehat{n}_{k(0)}}\lambda_{cu1}^{\widehat{n}_{k(1)}})^{2}b^{2}_{k} = b_{k+1}, \quad a_{1}\lambda^{2\widehat{n}_{k}}_{u}\bar{b}^{2}_{k} = \bar{b}_{k+1}.
\end{equation}
Moreover, define
\begin{equation}\label{4_con2}
  x^{*}_{k} = 20a_{2}b^{1/2}_{k}, \quad  y^{*}_{k} = 20a^{-1}_{2}\bar{b}^{1/2}_{k}.
\end{equation}
Let $(\widehat{x}^{u}_{k}, \widehat{y}^{u}_{k})$ denote the center of the $u$-box $\widehat{B}^{u}_{k}$.
We then introduce rectangles $X_{k}$ in the $x_{u}x_{s}$-plane and $Y_{k}$ in the $y_{u}y_{s}$-plane by
\begin{align*}
  X_{k} &= \qty[\widehat{x}^{u}_{k} - \frac{b_{k}}{2}, \widehat{x}^{u}_{k} + \frac{b_{k}}{2}] \times \qty[ - x^{*}_{k},  x^{*}_{k}], \\
  Y_{k} &= \qty[\widehat{y}^{u}_{k} - \frac{\bar{b}_{k}}{2}, \widehat{y}^{u}_{k} + \frac{\bar{b}_{k}}{2}] \times \qty[-y^{*}_{k},  y^{*}_{k}].
\end{align*}
We finally define the four-dimensional region $W_{k}$ by
\begin{equation*}
  W_{k} = X_{k} \times Y_{k}.
\end{equation*}

The following theorem is the main result established in this section.

\begin{thm}\label{main3}
    There exists an integer $K \geq k_{0}$ such that, for every $k \geq K$, the set $D_{k} = \mathrm{Int}\,(W_{k})$ is a wandering domain for $G$, and it satisfies
    \begin{equation}
        G^{\widehat{n}_{k}+2}(D_{k}) \subset D_{k+1}.
    \end{equation}
\end{thm}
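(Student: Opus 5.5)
We follow the Colli--Vargas scheme \cite{CV2001, KNS2023} and track the orbit of $W_{k}$ in three stages: the linear passage along $\underline{\gamma}^{(k)}$, the quadratic passage through $\mathbb{V}_{*}$ (where $G = F + \Delta_{k+1}$ by the perturbation proposition), and the arrival in $W_{k+1}$. Because $F$ and $G$ are products of maps acting on the $x$-coordinates $(x_{u},x_{s})$ and the $y$-coordinates $(y_{u},y_{s})$ separately, up to the quadratic terms, we treat the projections $X_{k}$ and $Y_{k}$ independently. The $x$-estimate uses $b_{k}$, $x^{*}_{k}$, and the exponents $\lambda_{cu0},\lambda_{cu1},\lambda_{s}$. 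The $y$-estimate is identical with $\bar b_{k}$, $y^{*}_{k}$, $\lambda_{u}$, $\lambda_{cs0},\lambda_{cs1}$, and $a_{1}$ in place of $1$ in the quadratic term.

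\textbf{Stage 1 (linear part).} $X_{k}$ is centered at $\widehat{x}^{u}_{k}$. Since $b_{k}$ is much smaller than the width of $P(\widehat{B}^{u}_{k})$, namely $\sim \lambda_{cu0}^{-\widehat{n}_{k(0)}}\lambda_{cu1}^{-\widehat{n}_{k(1)}}$ (by Lemma \ref{lem4_1} the ratio is $\lambda^{-\widehat n_{k+1}/2-\cdots}$, exponentially small), the set $X_{k}$ lies in the $u$-box. Hence $F^{\widehat{n}_{k}}$ acts on $W_{k}$ affinely. In $x_{u}$, the interval of length $b_{k}$ becomes an interval of length $\lambda_{cu0}^{\widehat{n}_{k(0)}}\lambda_{cu1}^{\widehat{n}_{k(1)}} b_{k} = b_{k}^{1/2} b_{k+1}^{1/2}$ by \eqref{4_con1}, centered at $0$ (the image of the center lies at the center of $\mathbb{V}_{*}$ up to the matching in Proposition \ref{critical}). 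In $x_{s}$, the interval $[-x^{*}_{k},x^{*}_{k}]$ is contracted by $\lambda_{s}^{\widehat{n}_{k}}$ into a tiny neighbourhood of the center of $\widehat{B}^{s}_{k+1}$. One more application of $F$ then lands the set in $\mathbb{V}_{*}$ at the stated place.

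\textbf{Stage 2 (quadratic part).} Apply $G = F+\Delta_{k+1}$ on $[-\delta,\delta]^{2}\times B^{s}_{k+1}$. The new $x_{u}$-coordinate equals $-x_{u}^{2} + \lambda_{*}^{-1}(x_{s}-a_{s}) + 1 + t_{k+1}$. By Theorem \ref{main2}(2), together with the choice of $\Delta_{k+1}$ in Proposition \ref{critical}, the image of the center equals $\widehat{x}^{u}_{k+1}$. The deviation splits into two terms. The first is $x_{u}^{2} \le \tfrac14 b_{k}b_{k+1}$, which must be $\ll b_{k+1}/2$; this holds since $b_{k}\to0$. The second is $\lambda_{*}^{-1}\lambda_{s}^{\widehat{n}_{k}} x^{*}_{k}$, which is $\ll b_{k+1}$ by \eqref{2_con2}: the relation $\lambda_{cu0}\lambda_{cu1}\lambda_{s}^{2}<1$ gives $\lambda_{s}^{\widehat{n}_{k}} b_{k}^{1/2} \ll b_{k+1}$, using the majority condition \eqref{maj} to compare the mixed exponents. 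The new $x_{s}$-coordinate is $a_{2}x_{u}$, of size at most $a_{2}b_{k}^{1/2}b_{k+1}^{1/2}/2 < x^{*}_{k+1} = 20a_{2}b_{k+1}^{1/2}$. Hence the image of $X_{k}$ lies in $\mathrm{Int}\,X_{k+1}$. The $y$-direction works the same way: the quadratic term gives $a_{1}y_{u}^{2}$, and \eqref{2_con1} controls the $\lambda_{cs}$ contraction against $\lambda_{u}^{2}$. This yields $G^{\widehat{n}_{k}+2}(D_{k}) \subset D_{k+1}$, with the count of $+2$ matching the convention of Theorem \ref{main2}.

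\textbf{Wandering property.} The boxes $\widehat{B}^{u}_{k}$ are pairwise disjoint by Lemma \ref{ll}(1), and distinct iterates of $D_{k}$ either lie in distinct boxes or are separated by the coding. Therefore $G^{i}(D_{k})\cap G^{j}(D_{k})=\emptyset$ for $i\ne j$. The diameters tend to $0$ since $b_{k},\bar b_{k}\to 0$. Non-triviality (the $\omega$-limit set is not a single periodic orbit) follows because the free codes $\underline{u}^{(k)}$ visit both $\mathbb{V}_{0}$ and $\mathbb{V}_{1}$.

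\textbf{Main obstacle.} We expect the hardest step to be the exponent bookkeeping in Stage 2, namely showing that the $x_{s}$-residue $\lambda_{s}^{\widehat{n}_{k}}x^{*}_{k}$ and the analogous $y_{s}$-residue are dominated by $b_{k+1}$ and $\bar b_{k+1}$ uniformly in $k\ge K$. This requires combining Lemma \ref{lem4_1} (so that $b_{k+1}\approx b_{k}^{2}\lambda^{2\widehat n_{k}}$ does not degenerate faster than expected), \eqref{2_con1}/\eqref{2_con2}, and the majority condition \eqref{maj}. We would first write everything as powers $\exp(-c\,\widehat{n}_{k})$, then choose $\eta$ in Lemma \ref{lem4_1}, and finally fix $K$.
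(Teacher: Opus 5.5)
Your plan follows the paper's proof. Lemma \ref{con_psi_phi} writes $G^{\widehat{n}_{k}+1}$ explicitly on $\widehat{B}^{u}_{k}\times[-2,2]^{2}$, and Lemmas \ref{con_pro1}--\ref{con_pro4} check the four coordinate inclusions separately in the $x_{u}x_{s}$- and $y_{u}y_{s}$-planes, as in your Stages 1 and 2.

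\textbf{The slip in Stage 1.} By \eqref{4_con1}, $\lambda_{cu0}^{\widehat{n}_{k(0)}}\lambda_{cu1}^{\widehat{n}_{k(1)}}b_{k}=b_{k+1}^{1/2}$, not $b_{k}^{1/2}b_{k+1}^{1/2}$. So at the edges $x_{u}=\widehat{x}^{u}_{k}\pm b_{k}/2$ the quadratic term is exactly $b_{k+1}/4$ for every $k$. That is half of the half-width $b_{k+1}/2$ of $X_{k+1}$. The weights $2^{-i}$ in $b_{k}$ are designed to produce exactly this, and the term does not become negligible as $b_{k}\to 0$. Consequently the $x_{u}$-inclusion holds only because the residual $\lambda_{*}^{-1}\lambda_{s}^{\widehat{n}_{k}}x^{*}_{k}$ is eventually smaller than $b_{k+1}/4$. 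This is the paper's estimate \eqref{ev1}, whose constant $\tfrac12$ is precisely this quadratic contribution. Your bound ``residual $\ll b_{k+1}$'' supplies what is needed, so the step is repairable, but the justification you wrote for the quadratic term is false. The same correction gives $|a_{2}x_{u}|\le a_{2}b_{k+1}^{1/2}/2<x^{*}_{k+1}$ (Lemma \ref{con_pro2}). In the $y$-direction it gives $a_{1}\lambda_{u}^{2\widehat{n}_{k}}y_{u}^{2}\le\bar{b}_{k+1}/4$.

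\textbf{The residual estimates.} With the corrected scaling, the $x$-residual ratio is, to leading exponential order, $(\lambda_{s}\lambda_{cu0})^{\widehat{n}_{k(0)}}(\lambda_{s}\lambda_{cu1})^{\widehat{n}_{k(1)}}$. It is small simply because $\lambda_{s}\lambda_{cu0}<\lambda_{cs0}\lambda_{u}<2/\lambda_{u}<1$, using \eqref{2_con1} with $\lambda_{cs1}>1/2$. The majority condition \eqref{maj} together with \eqref{2_con1} is genuinely needed only in the $y$-direction. There the ratio is $(\lambda_{cs0}\lambda_{u})^{\widehat{n}_{k(0)}}(\lambda_{cs1}\lambda_{u})^{\widehat{n}_{k(1)}}$ with $\lambda_{cs1}\lambda_{u}>1$.

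\textbf{The iteration count.} There is no extra application of $F$ before the quadratic step. $F^{\widehat{n}_{k}}$ sends the center of $\widehat{B}^{u}_{k}$ exactly to $x_{u}=y_{u}=0$, so the set is already in $\mathbb{V}_{*}$ after the $\widehat{n}_{k}$ affine iterates. Theorem \ref{main2}(2) and Lemma \ref{con_psi_phi} both give return time $\widehat{n}_{k}+1$. The exponent $\widehat{n}_{k}+2$ in the statement does not match that convention, and you should not insert a step to make it fit.
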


To prove this theorem, we introduce the projections $\pi \colon \mathbb{R}^{4} \to \mathbb{R}^{2}$ onto the $x_{u}x_{s}$-plane and $\pi^{\prime} \colon \mathbb{R}^{4} \to \mathbb{R}^{2}$ onto the $y_{u}y_{s}$-plane defined by
\begin{equation}\label{pro}
    \pi(x_{u}, y_{u}, x_{s}, y_{s}) = (x_{u}, x_{s}), \quad \pi^{\prime}(x_{u}, y_{u}, x_{s}, y_{s}) = (y_{u}, y_{s}).
\end{equation}

\begin{lem}\label{con_psi_phi}
    For any $(\widehat{x}^{u}_{k}+x_{u}, \widehat{y}^{u}_{k}+y_{u}, x_{s}, y_{s}) \in \widehat{B}^{u}_{k} \times [-2,2]^{2}$ the projections of $G^{\widehat{n}_{k}+1}(\widehat{x}^{u}_{k}+x_{u}, \widehat{y}^{u}_{k}+y_{u}, x_{s}, y_{s})$ onto the $x_{u}x_{s}$-plane and $y_{u}y_{s}$-plane are given by the following formulas:
    \begin{align}\label{varphi}
        (1)\ &\pi \circ G^{\widehat{n}_{k}+1}(\widehat{x}^{u}_{k}+x_{u}, \widehat{y}^{u}_{k}+y_{u}, x_{s}, y_{s}) \notag \\
        &= \qty(\widehat{x}^{u}_{k+1},0) + \qty(-(\lambda_{cu0}^{\widehat{n}_{k(0)}}\lambda_{cu1}^{\widehat{n}_{k(1)}}x_{u})^{2} + \lambda_{*}^{-1}(\lambda_{s}^{\widehat{n}_{k}}x_{s}-a_{s})+1,   a_{2}\lambda_{cu0}^{\widehat{n}_{k(0)}}\lambda_{cu1}^{\widehat{n}_{k(1)}}x_{u}).\\
        (2)\ &\pi^{\prime} \circ G^{\widehat{n}_{k}+1}(\widehat{x}^{u}_{k}+x_{u}, \widehat{y}^{u}_{k}+y_{u}, x_{s}, y_{s}) \notag \\
        &= \qty(\widehat{y}^{u}_{k+1},0) + \qty(-a_{1}\lambda^{2\widehat{n}_{k}}_{u} y_{u}^{2} + \mu_{*}(\lambda_{cs0}^{\widehat{n}_{k(0)}}\lambda_{cs1}^{\widehat{n}_{k(1)}}y_{s}-1)+a_{u}, \lambda_{u}^{\widehat{n}_{k}}y_{u}).
    \end{align}
\end{lem}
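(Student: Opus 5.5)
The plan is to compute $G^{\widehat{n}_{k}+1}$ in two stages: first the $\widehat{n}_{k}$ affine iterates along the code $\underline{\gamma}^{(k)}$, then one application of the perturbed tangency map $G|_{\mathbb{V}_*}$. Because $F$ is a product map, the $x$- and $y$-coordinates decouple, so the two projections $\pi$ and $\pi^{\prime}$ can be handled separately.

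\textbf{Step 1: the affine iterates.} A point $(\widehat{x}^{u}_{k}+x_{u}, \widehat{y}^{u}_{k}+y_{u}, x_{s}, y_{s})$ in $\widehat{B}^{u}_{k}\times[-2,2]^2$ lies in $\mathbb{B}^{u}(\underline{\gamma}^{(k)})$. Its first $\widehat{n}_{k}$ iterates stay in $\mathbb{V}_0\sqcup\mathbb{V}_1\subset\mathbb{V}_*^c$, where $G=F$.
\begin{itemize}
\item By \eqref{2_f1}, each step along symbol $i$ multiplies deviations from the center orbit by $\lambda_{cu i}$ in $x_u$ and by $\lambda_u$ in $y_u$.
\item After $\widehat{n}_{k}$ steps the displacement is $(\lambda_{cu0}^{\widehat{n}_{k(0)}}\lambda_{cu1}^{\widehat{n}_{k(1)}}x_u,\ \lambda_u^{\widehat{n}_{k}}y_u)$ around $F^{\widehat{n}_{k}}$ of the center. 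By the definition of $\mathbb{B}_*$ and of the center $c_*$, that image center is $(0,0)$ in the $x_uy_u$-plane.
\item The stable coordinates are contracted linearly: $x_s\mapsto \lambda_s^{\widehat{n}_{k}}x_s+\sigma_s$ and $y_s\mapsto \lambda_{cs0}^{\widehat{n}_{k(0)}}\lambda_{cs1}^{\widehat{n}_{k(1)}}y_s+\sigma_{cs}$. The constants $\sigma_s,\sigma_{cs}$ are the $s$-coordinates of the center of $F^{\widehat{n}_{k}}(\mathbb{B}_*(\underline{\gamma}^{(k)}))$, which lies in $\mathbb{B}^s([\underline{\gamma}^{(k)}]^{-1})$.
\end{itemize}
I would establish these by induction on the length of the code, using only that each branch is diagonal affine.

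\textbf{Step 2: the tangency step.} The image point lies in $[-\delta,\delta]^2\times B^s_{k+1}$. By the preceding proposition, $G=\Delta_{k+1}+F$ there, so I apply \eqref{model_tan} and add $(t_{k+1},\widetilde t_{k+1},0,0)$.
\begin{itemize}
\item The $x_s$-component of the result is $a_2$ times the $x_u$-displacement.
\item The $y_s$-component is $\lambda_u^{\widehat n_k}y_u$.
\item The $x_u$-component is $-(\cdot)^2+\lambda_*^{-1}(x_s'-a_s)+1$, and the $y_u$-component is the analogous expression with $a_1$ and $\mu_*$.
\end{itemize}
The constant terms $\lambda_*^{-1}(\sigma_s - a_s)+1$ and $\mu_*(\sigma_{cs}-1)+a_u$, together with $t_{k+1},\widetilde t_{k+1}$, should combine to give $\widehat{x}^u_{k+1}$ and $\widehat{y}^u_{k+1}$. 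This is exactly the center-matching conclusion of Proposition \ref{critical}, equivalently Theorem \ref{main2}(2): $F^{\widehat n_k+1}(c_k)+\Delta_{k+1}=c_{k+1}$.

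\textbf{Main obstacle: bookkeeping of the constants in the $s$-coordinates.} The formula as stated writes $\lambda_*^{-1}(\lambda_s^{\widehat n_k}x_s - a_s)+1$ with the offset $\sigma_s$ absorbed. I would therefore interpret the displayed formula modulo the identification of centers. Concretely, I would write the full affine image and then verify that all $x$- and $y$-independent terms sum to $(\widehat x^u_{k+1},\widehat y^u_{k+1})$ by evaluating at $x_u=y_u=0$ and $x_s=y_s=0$ and invoking Theorem \ref{main2}(2). If the residual constants do not cancel exactly as displayed, I would state the formula with the center offset explicitly included. This suffices for the later contraction estimates, since only the linear and quadratic coefficients $\lambda_{cu0}^{\widehat n_{k(0)}}\lambda_{cu1}^{\widehat n_{k(1)}}$, $\lambda_u^{\widehat n_k}$, $\lambda_s^{\widehat n_k}$ and $\lambda_{cs0}^{\widehat n_{k(0)}}\lambda_{cs1}^{\widehat n_{k(1)}}$ matter.
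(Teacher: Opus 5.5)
Your argument is the paper's argument. You compose the diagonal affine branches along $\underline{\gamma}^{(k)}$: the centre of $\widehat B^u_k$ goes to the origin of the $x_uy_u$-plane, $u$-deviations are multiplied by $\lambda_{cu0}^{\widehat n_{k(0)}}\lambda_{cu1}^{\widehat n_{k(1)}}$ and $\lambda_u^{\widehat n_k}$, and the $s$-coordinates are contracted toward the centre of the reversed $s$-box. Then you apply $G=\Delta_{k+1}+F$ on $\mathbb{V}_*$ and use centre matching.

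The point you left open is real, and it resolves against the displayed formula. Evaluating your Step 2 at zero displacement, Theorem~\ref{main2}(2) gives exactly
\[
t_{k+1}+\lambda_*^{-1}(\sigma_s-a_s)+1=\widehat x^u_{k+1},\qquad \widetilde t_{k+1}+\mu_*(\sigma_{cs}-1)+a_u=\widehat y^u_{k+1}.
\]
Hence the first components are
\[
\widehat x^u_{k+1}-(\lambda_{cu0}^{\widehat n_{k(0)}}\lambda_{cu1}^{\widehat n_{k(1)}}x_u)^2+\lambda_*^{-1}\lambda_s^{\widehat n_k}x_s
\quad\text{and}\quad
\widehat y^u_{k+1}-a_1\lambda_u^{2\widehat n_k}y_u^2+\mu_*\lambda_{cs0}^{\widehat n_{k(0)}}\lambda_{cs1}^{\widehat n_{k(1)}}y_s ,
\]
and the second components are as displayed. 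The display carries extra constants $1-\lambda_*^{-1}a_s$ and $a_u-\mu_*$. Neither vanishes, since $\lambda_*,\mu_*$ are large while $a_s,a_u<2$.

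The paper's proof reaches the display by writing $\lambda_*^{-1}(\lambda_s^{\widehat n_k}x_s-a_s)+t_{k+1}+\widehat x^s_{k+1}+1$, with no factor $\lambda_*^{-1}$ on the $s$-centre, and then setting $t_{k+1}+\widehat x^s_{k+1}=\widehat x^u_{k+1}$. In effect the constant $1-\lambda_*^{-1}a_s$ of the tangency map is counted both inside $\widehat x^u_{k+1}$ and outside it. So commit to the corrected formula rather than hedging. As you say, Lemmas~\ref{con_pro1}--\ref{con_pro4} only use the displacement from the centre, so nothing downstream changes.

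One further correction, which the paper's proof shares: $f^{\widehat n_k}$ maps $\widehat B^u_k$ onto all of $[-2,2]^2$. So your claim that the $\widehat n_k$-th iterate lies in $[-\delta,\delta]^2\times B^s_{k+1}$ fails on most of $\widehat B^u_k\times[-2,2]^2$. The formula holds only where $|\lambda_{cu0}^{\widehat n_{k(0)}}\lambda_{cu1}^{\widehat n_{k(1)}}x_u|\le\delta$ and $|\lambda_u^{\widehat n_k}y_u|\le\delta$, i.e.\ on $\mathbb{B}_*(\underline{\gamma}^{(k)})$. This is harmless, because $W_k$ lies well inside that region.
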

\begin{proof}
    First, we define the functions $\xi_{i} \colon \mathbb{R} \rightarrow \mathbb{R} \ (i = 0,1)$ by
    \begin{equation*}
        \xi_{0}(x_{u}) = \lambda_{cu0}(x_{u} + 1 ), \quad \xi_{1}(x_{u}) =\lambda_{cu1}(x_{u} - 1)+1.
    \end{equation*}
    Then, for any $\widehat{x}_{u}, x_{u} \in \mathbb{R}$, we have
    \begin{equation*}
        \xi_{0}(\widehat{x}_{u} + x_{u}) = \xi_{0}(\widehat{x}_{u}) + \lambda_{cu0}x_{u}, \quad \xi_{1}(\widehat{x}_{u} + x_{u}) = \xi_{1}(\widehat{x}_{u}) + \lambda_{cu1}x_{u}.
    \end{equation*}
    Similarly, define $\iota_{i}\colon \mathbb{R} \rightarrow \mathbb{R} \ (i = 0,1)$ by
    \begin{equation*}
        \iota_{0}(x_{s}) = \lambda_{s}x_{s}-1, \quad \iota_{1}(x_{s}) =\lambda_{s}x_{s} + 1.
    \end{equation*}
    Then, for any $\widehat{x}_{s},x_{s} \in \mathbb{R}$,
    \begin{equation*}
        \iota_{0} (\widehat{x}_{s} + x_{s}) = \iota_{0}(\widehat{x}_{s}) + \lambda_{s}x_{s}, \ \iota_{1}(\widehat{x}_{s}+ x_{s}) = \iota_{1}(\widehat{x}_{s}) + \lambda_{s}x_{s}.
    \end{equation*}
    Let $\underline{\gamma}^{(k)} = \gamma(1)\gamma(2) \ldots \gamma(\widehat{n}_{k})$ be the code associated with the boxes $\widehat{B}^{u}_{k}$ and $\widehat{B}^{s}_{k}$. Then, for each $i \in \{1, \ldots \widehat{n}_{k}\}$, we have
    \begin{align}\label{psi_2}
        &\pi \circ G^{\widehat{n}_{k}}(\widehat{x}^{u}_{k}+x_{u}, \widehat{y}^{u}_{k}+y_{u}, 1/2+x_{s}, 1/2+y_{s}) \notag \\
        & = \Big(\xi_{\gamma(\widehat{n}_{k})} \circ \cdots \circ \xi_{\gamma(1)}(\widehat{x}^{u}_{k}) + \lambda_{cu0}^{\widehat{n}_{k(0)}}\lambda_{cu1}^{\widehat{n}_{k(1)}}x_{u}, \iota_{\gamma(\widehat{n}_{k})} \circ \cdots \circ \iota_{\gamma_{1}}(0) + \lambda^{\widehat{n}_{k}}_{s}x_{s}\Big).
    \end{align}
    Since $\widehat{x}^{u}_{k}$ is the center of the $u$-box $\widehat{B}^{u}_{k}$, it follows from the definition of the $u$-box that
    \begin{equation*}
        \xi_{\gamma(\widehat{n}_{k})} \circ \cdots \circ \xi_{\gamma(1)}(\widehat{x}^{u}_{k}) = 0.
    \end{equation*}
    Moreover, if $\widehat{x}^{s}_{k+1}$ denotes the center of the $s$-box $\widehat{B}^{s}_{k+1}$, then
    \begin{equation*}
        \iota_{\gamma(\widehat{n}_{k})} \circ \cdots \circ \iota_{\gamma_{1}}(1/2) = \widehat{x}^{s}_{k+1}.
    \end{equation*}
    Hence, by \eqref{psi_2}
    \begin{align*}
        &\pi \circ G^{\widehat{n}_{k}}(\widehat{x}^{u}_{k}+x_{u}, \widehat{y}^{u}_{k}+y_{u}, x_{s}, y_{s}) \\
        &= \qty( \lambda_{cu0}^{\widehat{n}_{k(0)}} \lambda_{cu1}^{\widehat{n}_{k(1)}}x_{u},
    \widehat{x}^{s}_{k+1} + \lambda^{\widehat{n}_{k}}_{s}x_{s}) \in [-\delta, \delta] \times [-2,2].
    \end{align*}
    Therefore,
    \begin{align*}
        &\pi \circ G^{\widehat{n}_{k}+1}(\widehat{x}^{u}_{k}+x_{u}, \widehat{y}^{u}_{k}+y_{u}, x_{s}, y_{s}) \\
        &= \qty(- \lambda_{cu0}^{2 \widehat{n}_{k(0)}}\lambda_{cu1}^{2\widehat{n}_{k(1)}}x_{u}^{2} + \lambda_{*}^{-1}(\lambda_{s}^{\widehat{n}_{k}}x_{s}-a_{s})+ t_{k+1} + \widehat{x}^{s}_{k+1}+1,\ a_{2}\lambda_{cu0}^{\widehat{n}_{k(0)}}\lambda_{cu1}^{\widehat{n}_{k(1)}}x_{u} ).
    \end{align*}
    Since $t_{k+1} + \widehat{x}^{s}_{k+1} = \widehat{x}^{u}_{k+1}$, (1) follows.
    The same computation for the projection onto the $y_{u}y_{s}$-plane yields (2). See also \cite[Lemma 4.6]{KNS2023} for further details.
\end{proof}

Next, we prove lemmas \ref{con_pro1} and \ref{con_pro2} for the $x_{u}x_{s}$-plane.
For this purpose, we introduce the following notation.
For each $k > 0$, set
\begin{align*}
    \partial_{u}(W_{k}) &= W_{k} \cap \{x_{u} = \widehat{x}^{u}_{k} \pm b_{k}/2\}, \\
    \partial_{s} (W_{k}) &= W_{k} \cap \{x_{s} =  \pm x^{*}_{k}\},\\
    c(W_{k}) &= W_{k} \cap \{x_{u} = \widehat{x}^{u}_{k}\}.
\end{align*}
Under the projection $\pi \colon \mathbb{R}^{4} \to \mathbb{R}^{2}$ defined in \eqref{pro}, these sets correspond respectively to the two boundary edges of the rectangles $X_{k}$ parallel to the $x_{s}$- and $x_{u}$-axes and to its center line.
More precisely,
\begin{align*}
    \partial_{u}(X_{k}) &= \pi(\partial_{u}(W_{k})) = X_{k} \cap \{x_{u} = \widehat{x}^{u}_{k} \pm b_{k}/2\}, \\
    \partial_{s} (X_{k}) &= \pi(\partial_{s}(W_{k}))X_{k} \cap \{x_{s} =  \pm x^{*}_{k}\},\\
    c(X_{k}) &= \pi(c(W_{k})) = X_{k} \cap \{x_{u} = \widehat{x}^{u}_{k}\}.
\end{align*}

\begin{figure}[H]
  \centering
  \scalebox{0.3}{\includegraphics{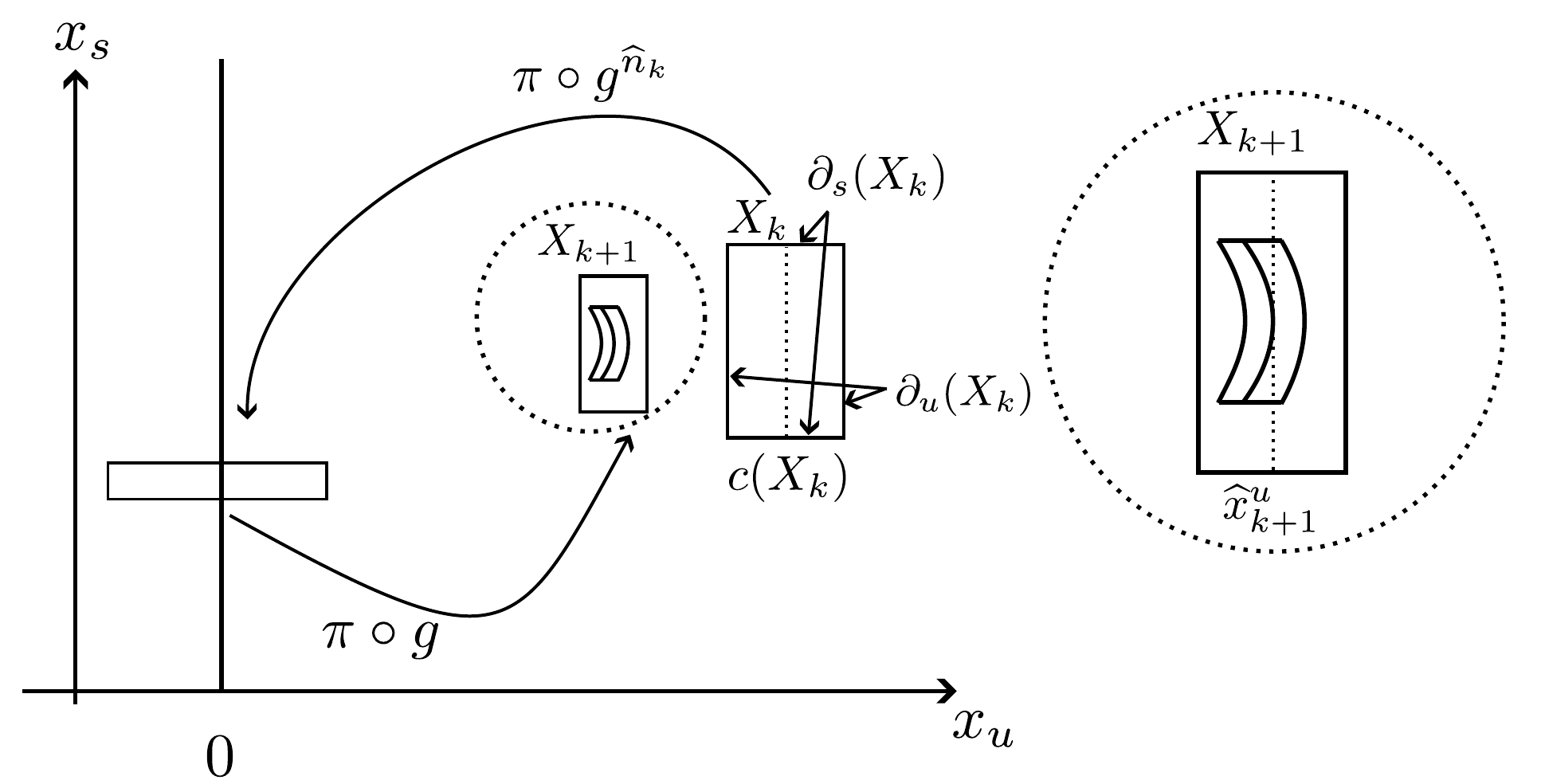}}
  \caption{}
  \label{rec_y}
\end{figure}

Let $\pi_{i} \colon \mathbb{R}^{4} \to \mathbb{R}$ $(i=1,2,3,4)$ be the projection onto the $i$-th coordinate, defined by
\begin{align*}
    \pi_{1}(x_{u}, y_{u}, x_{s}, y_{s}) = x_{u}, \quad \pi_{2}(x_{u}, y_{u}, x_{s}, y_{s}) = y_{u}, \\
    \pi_{3}(x_{u}, y_{u}, x_{s}, y_{s}) = x_{s}, \quad \pi_{4}(x_{u}, y_{u}, x_{s}, y_{s}) = y_{s}.
\end{align*}

\begin{lem}\label{con_pro1}
    There exists an integer $k_{1} \geq k_{0}$ such that, for every $k > k_{1}$,
    \begin{equation*}
        \pi_{1}(G^{\widehat{n}_{k}+1}(W_{k})) \subset \pi_{1}(W_{k+1}).
    \end{equation*}
\end{lem}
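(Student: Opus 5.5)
The plan is to compute $\pi_{1}\circ G^{\widehat{n}_{k}+1}$ on $W_{k}$ explicitly using Lemma \ref{con_psi_phi}(1), and then compare the resulting interval with $\pi_{1}(W_{k+1})=[\widehat{x}^{u}_{k+1}-b_{k+1}/2,\ \widehat{x}^{u}_{k+1}+b_{k+1}/2]$.

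First we check that Lemma \ref{con_psi_phi} applies. For $k$ large, $b_{k}/2$ is much smaller than the half-width of $\widehat{B}^{u}_{k}$, which is of order $(\lambda_{cu0}^{\widehat{n}_{k(0)}}\lambda_{cu1}^{\widehat{n}_{k(1)}})^{-1}$. Indeed, $b_{k}$ carries the exponent $\sum_{i\ge0}\widehat{n}_{k+i(\cdot)}/2^{i}$, which is roughly twice $\widehat{n}_{k(\cdot)}$. Also $x^{*}_{k}\to0$. Hence $\pi(W_{k})\subset \pi(\widehat{B}^{u}_{k}\times[-2,2]^{2})$.

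Write a point of $W_{k}$ as $(\widehat{x}^{u}_{k}+x_{u},\dots,x_{s},\dots)$ with $|x_{u}|\le b_{k}/2$ and $|x_{s}|\le x^{*}_{k}$. Put $\Lambda_{k}=\lambda_{cu0}^{\widehat{n}_{k(0)}}\lambda_{cu1}^{\widehat{n}_{k(1)}}$. By Lemma \ref{con_psi_phi}(1), the first coordinate of the image is
\[
\widehat{x}^{u}_{k+1}-\Lambda_{k}^{2}x_{u}^{2}+\lambda_{*}^{-1}\lambda_{s}^{\widehat{n}_{k}}x_{s}+(1-\lambda_{*}^{-1}a_{s}).
\]
The constant $1-\lambda_{*}^{-1}a_{s}$ should vanish, since it is absorbed by the normalization built into the definition of the centers. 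This matches the tangency point $F(0,0,a_{s},1)=(1,a_{u},0,0)$ after recentering, so the expression becomes $\widehat{x}^{u}_{k+1}-\Lambda_{k}^{2}x_{u}^{2}+\lambda_{*}^{-1}\lambda_{s}^{\widehat{n}_{k}}x_{s}$. I will treat this recentering bookkeeping as given, following \cite[Lemma 4.6]{KNS2023}.

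We bound the two variable terms separately. By \eqref{4_con1},
\[
0\le\Lambda_{k}^{2}x_{u}^{2}\le \Lambda_{k}^{2}b_{k}^{2}/4=b_{k+1}/4 .
\]
Then
\[
|\lambda_{*}^{-1}\lambda_{s}^{\widehat{n}_{k}}x_{s}|\le 20\lambda_{*}^{-1}a_{2}\lambda_{s}^{\widehat{n}_{k}}b_{k}^{1/2}.
\]
It therefore suffices to show $20\lambda_{*}^{-1}a_{2}\lambda_{s}^{\widehat{n}_{k}}b_{k}^{1/2}<b_{k+1}/4$ for $k$ large. Given this, the image lies in $[\widehat{x}^{u}_{k+1}-b_{k+1}/2,\ \widehat{x}^{u}_{k+1}+b_{k+1}/2]$.

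The main obstacle is this last inequality. Using $b_{k+1}=\Lambda_{k}^{2}b_{k}^{2}$, it is equivalent to
\[
80\lambda_{*}^{-1}a_{2}\lambda_{s}^{\widehat{n}_{k}}<\Lambda_{k}^{2}b_{k}^{3/2}.
\]
Taking logarithms, $-\log b_{k}$ equals $\sum_{i}2^{-i}(\widehat{n}_{k+i(0)}\log\lambda_{cu0}+\widehat{n}_{k+i(1)}\log\lambda_{cu1})$. By Lemma \ref{lem4_1}, this is at most $(2+O(\eta))\log\Lambda_{k}$. So the right side is at least $\Lambda_{k}^{-1-O(\eta)}$, and we need
\[
\lambda_{s}^{\widehat{n}_{k}}\Lambda_{k}^{1+O(\eta)}\to0 .
\]
This follows from \eqref{2_con2}: $\lambda_{cu0}\lambda_{cu1}\lambda_{s}^{2}<1$. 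Under the majority condition \eqref{maj}, we have $\Lambda_{k}\le(\lambda_{cu0}\lambda_{cu1})^{\widehat{n}_{k}/2}$ roughly, so $\lambda_{s}^{\widehat{n}_{k}}\Lambda_{k}\le(\lambda_{s}^{2}\lambda_{cu0}\lambda_{cu1})^{\widehat{n}_{k}/2}$. (Care is needed here: if $\lambda_{cu0}>\lambda_{cu1}$, the bound needs the 0's not to dominate too much; otherwise one uses $\lambda_{cu0}\lambda_{s}<1$ directly, which also follows from $\lambda_{s}<1/2<\lambda_{cu0}^{-1}\cdot$ the assumed constants.) Choosing $\eta$ small enough that the $O(\eta)$ loss is beaten by the strict inequality gives exponential decay in $\widehat{n}_{k}$. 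That fixes $k_{1}\ge k_{0}$ beyond which the inequality holds.
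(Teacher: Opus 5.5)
Your route is the paper's. You read the first coordinate off Lemma~\ref{con_psi_phi}(1), bound the quadratic term by $b_{k+1}/4$ via \eqref{4_con1}, and show that the stable contribution $\lambda_{s}^{\widehat{n}_{k}}x^{*}_{k}$ is $o(b_{k+1})$ using Lemma~\ref{lem4_1}. The paper packages this as the ratio bound \eqref{ev1} and reduces to the same quantity $\lambda_{s}^{\widehat{n}_{k}}\Lambda_{k}^{1+O(\eta)}$, with your $\Lambda_k$. A side remark on the constant: $1-\lambda_{*}^{-1}a_{s}$ is not zero. It is already contained in the $s$-box center, because $B^{s}$ is defined in the $x_{u}y_{u}$-plane through $f_{*}$, and that is why the paper also drops it.

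Your closing step, however, is wrong as written. Since $\lambda_{cu0}>\lambda_{cu1}$ and \eqref{maj} makes the $0$'s the majority, you get $\Lambda_{k}\geq(\lambda_{cu0}\lambda_{cu1})^{\widehat{n}_{k}/2}$, the reverse of your inequality, so \eqref{2_con2} gives nothing here. The fallback justification via ``$\lambda_{s}<1/2<\lambda_{cu0}^{-1}$'' also fails, because $\lambda_{cu0}^{-1}<1/2$. The inequality you actually need, $\lambda_{s}\lambda_{cu0}<1$, does hold: $\lambda_{s}\lambda_{cu0}<\lambda_{cs0}\lambda_{u}<2/\lambda_{u}<1$ by \eqref{claim_con} and $\lambda_{u}>2$. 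The paper records $\lambda_{u}\lambda_{cs0}<1$ in the proof of Claim~\ref{claim}. Then $\lambda_{s}^{\widehat{n}_{k}}\Lambda_{k}^{1+O(\eta)}\leq(\lambda_{s}\lambda_{cu0}^{1+O(\eta)})^{\widehat{n}_{k}}\to0$, which is also what the paper's chain \eqref{ev} implicitly relies on.

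There is a further gap, shared with the paper. Lemma~\ref{lem4_1} controls only the total lengths $\widehat{n}_{k+i}$, not the counts $\widehat{n}_{k+i(0)}$. So it does not give $-\log b_{k}\leq(2+O(\eta))\log\Lambda_{k}$ when the proportion of $0$'s jumps between consecutive codes. For example, at an era switch from $3/4$ to $7/8$ the excess $-\log b_{k}-2\log\Lambda_{k}$ is about $\frac{\widehat{n}_{k}}{8}\log(\lambda_{cu0}/\lambda_{cu1})$, not $O(\eta)\widehat{n}_{k}$.

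Here is a version that uses only what Lemma~\ref{lem4_1} actually says. Write $-\log b_{k}=\log\Lambda_{k}+\sum_{i\geq1}2^{-i}\log\Lambda_{k+i}$. Bound $\log\Lambda_{k+i}\leq(1+\eta)^{i}\widehat{n}_{k}\log\lambda_{cu0}$, and use $\log\Lambda_{k}\geq\frac12\widehat{n}_{k}\log\lambda_{cu0}$ from \eqref{maj}. This gives $\log(\lambda_{s}^{\widehat{n}_{k}}\Lambda_{k}^{-2}b_{k}^{-3/2})\leq\widehat{n}_{k}\log(\lambda_{s}\lambda_{cu0}^{5/4+O(\eta)})$. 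Finally, $\lambda_{s}\lambda_{cu0}^{5/4}<1$ holds: \eqref{2_con1}, together with the positivity of the right-hand side of \eqref{star}, gives $\lambda_{s}\lambda_{u}^{2}<1/\lambda_{cs1}<(1+a_{s})/2<3/2$, hence $\lambda_{s}\lambda_{cu0}^{5/4}<\frac32\lambda_{u}^{-3/4}<1$.
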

\begin{proof}
    From \eqref{2_f} the curve $\pi_{1}\circ G^{\widehat{n}_{k}+1}(\partial _{s}(W_{k}))$ is a parabola.
    The farthest point between $\pi (c(W_{k+1}))$ and the set $\pi \circ G^{\widehat{n}_{k}+1}(W_{k})$ is attained at one of the endpoints of this parabola.
    In particular, in \eqref{varphi}, this occurs at $x_{u} = b_{k}/2$ and $x_{s} = x^{*}_{k}$.
    Hence
    \begin{align*}
        \text{sup}\qty{d(t, c(X_{k+1})) \ | \ t \in \pi(G^{\widehat{n}_{k}+1}(W_{k}))} &= \qty| \qty(\lambda_{cu0}^{\widehat{n}_{k(0)}}\lambda_{cu1}^{\widehat{n}_{k(1)}} \frac{b_{k}}{2})^{2} + \lambda_{s}^{\widehat{n}_{k}} x^{*}_{k} | \\
        &= \qty| \frac{b_{k+1}}{4} + \lambda_{s}^{\widehat{n}_{k}} x^{*}_{k} |.
    \end{align*}
    To prove lemma \ref{con_pro1}, we estimate the following inequality:
    \begin{align}\label{ev1}
        \frac{\text{sup}\qty{d(t, c(X_{k+1})) \ | \ t \in \pi(G^{\widehat{n}_{k}+1}(W_{k}))}}{\text{sup}\qty{d(t, c(X_{k+1})) \ | \ t \in \partial _{u}(X_{k})}} \leq \frac{1}{2} + \qty|\frac{\lambda_{s}^{\widehat{n}_{k}}x^{*}_{k}}{b_{k+1}/2}|.
    \end{align}
    The second term on the right-hand side of \eqref{ev1} can be estimated as follows.
    \begin{align*}
        &\qty|\frac{\lambda_{s}^{\widehat{n}_{k}}x^{*}_{k}}{b_{k+1}/2}| \\
        &= \lambda_{s}^{\widehat{n}_{k}} 40  a_{2} b_{k}^{1/2} b_{k+1}^{-1} \\
        &= 40   a_{2} \lambda_{s}^{\widehat{n}_{k}} \qty(\lambda_{cu0}^{-\widehat{n}_{k(0)}}\lambda_{cu1}^{-\widehat{n}_{k(1)}})^{2} \qty( \lambda_{cu0}^{\sum_{i=0}^{\infty} \widehat{n}_{k+i(0)}/2^{i}} \lambda_{cu1}^{\sum_{i=0}^{\infty} \widehat{n}_{k+i(1)}/2^{i}})^{3/2}.
    \end{align*}
    Now, by lemma \ref{lem4_1},
    \begin{align*}
        \frac{3}{2} \sum_{i=0}^{\infty} \frac{\widehat{n}_{k+i(0)}}{2^{i}} \leq \frac{3}{2} \widehat{n}_{k(0)} \sum_{i=0}^{\infty} \qty(\frac{1+ \eta}{2})^{i} = \frac{3 \widehat{n}_{k(0)}}{1 - \eta} = (3+\eta_{1})\widehat{n}_{k(0)}
    \end{align*}
    where we set $\eta_{1} = 3\eta/(1 - \eta)$.
    Since the constants $\lambda_{cu0}$, $\lambda_{cu1}$, $\lambda_{s}$ satisfy $\lambda_{cu0}\lambda_{cu1}\lambda_{s} < 1$, we may choose $\eta >0$ sufficiently small so that
    \begin{equation*}
        \lambda_{cu0}^{(1+\eta_{1})}\lambda_{cu1}^{(1+\eta_{1})}\lambda_{s} < 1.
    \end{equation*}
    Moreover, using the majority condition \eqref{maj}, we obtain the following estimate:
    \begin{equation}\label{ev}
        \begin{split}
            \qty|\frac{\lambda_{s}^{\widehat{n}_{k}}x^{*}_{k}}{b_{k+1}/2}| &\leq 40 a_{2} \lambda_{s}^{\widehat{n}_{k}} \lambda_{cu0}^{-2\widehat{n}_{k(0)}}\lambda_{cu1}^{-2\widehat{n}_{k(1)}} \lambda_{cu0}^{3(1+ \eta_{1})\widehat{n}_{k(0)}}\lambda_{cu1}^{3(1+ \eta_{1})\widehat{n}_{k(1)}} \\
            &= 40 a_{2}\qty(\lambda_{cu0}^{(1+\eta_{1})}\lambda_{s})^{\widehat{n}_{k(0)}} \qty(\lambda_{cu1}^{(1+\eta_{1})}\lambda_{s})^{\widehat{n}_{k(1)}} \\
            &\leq 40 a_{2}\qty(\lambda_{cu0}^{(1+\eta_{1})}\lambda_{cu1}^{(1+\eta_{1})}\lambda_{s})^{\widehat{n}_{k(1)}} .
        \end{split}
    \end{equation}
    Since the right-hand side of \eqref{ev} tends to zero as $k \to \infty$, the right-hand side of \eqref{ev1} becomes strictly less than $1$.
    Thus, the desired estimate in lemma \ref{con_pro1} follows.
\end{proof}

\begin{lem}\label{con_pro2}
    There exists an integer $k_{1}^{\prime} \geq k_{0}$ such that, for every $k > k_{1}^{\prime}$,
    \begin{equation*}
        \pi_{3}(G^{\widehat{n}_{k}+1}(W_{k})) \subset \pi_{3}(W_{k+1}).
    \end{equation*}
\end{lem}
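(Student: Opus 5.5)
The plan is to read the third coordinate of $G^{\widehat{n}_{k}+1}$ off Lemma \ref{con_psi_phi}(1). By \eqref{varphi}, for a point of $W_{k}$ written as $(\widehat{x}^{u}_{k}+x_{u},\widehat{y}^{u}_{k}+y_{u},x_{s},y_{s})$ with $|x_{u}|\le b_{k}/2$ and $|x_{s}|\le x^{*}_{k}$, the $x_{s}$-coordinate of the image is
\begin{equation*}
  \pi_{3}\circ G^{\widehat{n}_{k}+1}(\cdot) = a_{2}\lambda_{cu0}^{\widehat{n}_{k(0)}}\lambda_{cu1}^{\widehat{n}_{k(1)}}x_{u}.
\end{equation*}
Here I use that the second component of \eqref{varphi} is the $x_{s}$-coordinate, since $F$ on $\mathbb{V}_{*}$ sends $x_{u}$ to $a_{2}x_{u}$. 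It is independent of $x_{s}$ and linear in $x_{u}$. Hence $\pi_{3}(G^{\widehat{n}_{k}+1}(W_{k}))$ is the symmetric interval of half-length $a_{2}\lambda_{cu0}^{\widehat{n}_{k(0)}}\lambda_{cu1}^{\widehat{n}_{k(1)}}b_{k}/2$. It then suffices to prove that this is less than $x^{*}_{k+1}=20a_{2}b_{k+1}^{1/2}$.

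The key step is the first identity in \eqref{4_con1}: $\lambda_{cu0}^{\widehat{n}_{k(0)}}\lambda_{cu1}^{\widehat{n}_{k(1)}}b_{k}=b_{k+1}^{1/2}$. With it the half-length equals $\tfrac{a_{2}}{2}b_{k+1}^{1/2}$, which is strictly smaller than $20a_{2}b_{k+1}^{1/2}$. This gives the inclusion, even into the interior, for every $k\ge k_{0}$, so we may take $k_{1}'=k_{0}$ or any larger integer. The only point to check is that \eqref{4_con1} indeed follows from the definition of $b_{k}$. Squaring $b_{k}$ doubles the exponent sums, and multiplying by $(\lambda_{cu0}^{\widehat{n}_{k(0)}}\lambda_{cu1}^{\widehat{n}_{k(1)}})^{2}$ removes the $i=0$ terms. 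What remains is $\lambda_{cu0}^{-\sum_{i\ge1}\widehat{n}_{k+i(0)}/2^{i-1}}\lambda_{cu1}^{-\sum_{i\ge1}\widehat{n}_{k+i(1)}/2^{i-1}}=b_{k+1}$. The series converge by Lemma \ref{lem4_1}.

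The main obstacle is justifying the use of Lemma \ref{con_psi_phi} on all of $W_{k}$. That lemma is stated for points whose $u$-part lies in $\widehat{B}^{u}_{k}$, and it also requires the $\widehat{n}_{k}$-th iterate to land in $[-\delta,\delta]^{2}\times B^{s}_{k+1}$, where $G$ is the translated $F$. I would check that $b_{k}$ is not larger than the $x_{u}$-width of $\widehat{B}^{u}_{k}$. That width is $4\lambda_{cu0}^{-\widehat{n}_{k(0)}}\lambda_{cu1}^{-\widehat{n}_{k(1)}}$, up to the constant from $V_{i}$, while $b_{k}\le\lambda_{cu0}^{-\widehat{n}_{k(0)}}\lambda_{cu1}^{-\widehat{n}_{k(1)}}$ since all exponents are positive. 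For the $x_{s}$-direction I need $x^{*}_{k}\le 2$ for large $k$, which holds because $b_{k}\to0$. The $y$-coordinates are handled by the analogous bound for $\bar b_{k}$ and $y^{*}_{k}$, which is the analogue of this lemma in the $y_{u}y_{s}$-plane. If this containment fails for small $k$, I would enlarge $k_{1}'$ so that $b_{k}$, $\bar b_{k}$, $x^{*}_{k}$ and $y^{*}_{k}$ are all small enough. After that the computation above is routine.
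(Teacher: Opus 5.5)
Your proposal is correct and is essentially the paper's proof: you read off the $x_s$-coordinate $a_{2}\lambda_{cu0}^{\widehat{n}_{k(0)}}\lambda_{cu1}^{\widehat{n}_{k(1)}}x_{u}$ from Lemma \ref{con_psi_phi}, then use \eqref{4_con1} to get $a_{2}\lambda_{cu0}^{\widehat{n}_{k(0)}}\lambda_{cu1}^{\widehat{n}_{k(1)}}b_{k}/2=\tfrac{a_{2}}{2}b_{k+1}^{1/2}<20a_{2}b_{k+1}^{1/2}=x^{*}_{k+1}$. Your extra check that $W_{k}$ lies in the region where Lemma \ref{con_psi_phi} applies (via $b_{k}\le\lambda_{cu0}^{-\widehat{n}_{k(0)}}\lambda_{cu1}^{-\widehat{n}_{k(1)}}$ and $x^{*}_{k}\to 0$, enlarging $k_{1}'$ if needed) is sound; the paper leaves this implicit.
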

\begin{proof}
    To prove lemma \ref{con_pro2}, it suffices to estimate the distance between the endpoint of $\pi \circ G^{\widehat{n}_{k}+1}(\partial_{u} W_{k})$ and the line $\qty{x_{s} = \frac{1}{2}}$. By lemma \ref{con_psi_phi}, we have
    \begin{equation*}
        \pi_{3} \circ G^{\widehat{n}_{k} +1}(\widehat{x}^{u}_{k} + b_{k}/2, \widehat{y}^{u}_{k} + \bar{b}_{k}/2, 0, 0)= a_{2}\lambda_{cu0}^{\widehat{n}_{k(0)}}\lambda_{cu1}^{\widehat{n}_{k(1)}}b_{k}/2.
    \end{equation*}
    Thus, to obtain the desired inclusion, it is enough to verify that
    \begin{equation*}
        a_{2}\lambda_{cu0}^{\widehat{n}_{k(0)}}\lambda_{cu1}^{\widehat{n}_{k(1)}}\frac{b_{k}}{2} < x^{*}_{k+1}.
    \end{equation*}
    Indeed, by \eqref{4_con1} and \eqref{4_con2}, we obtain
    \begin{align*}
    x^{*}_{k+1} &= 20 a_{2} b^{\frac{1}{2}}_{k+1} \\
    &= 20 a_{2}\lambda_{cu0}^{\widehat{n}_{k(0)}} \lambda_{cu1}^{\widehat{n}_{k(1)}}b_{k} \\
    &> a_{2} \lambda_{cu0}^{\widehat{n}_{k(0)}} \lambda_{cu1}^{\widehat{n}_{k(1)}} \frac{b_{k}}{2}.
  \end{align*}
  This yields the desired inequality.
\end{proof}

The same arguments applied to the $y_{u}y_{s}$-plane give the following two lemmas.

\begin{lem}\label{con_pro3}
    There exists an integer $k_{2} \geq k_{0}$ such that, for every $k > k_{2}$,
    \begin{equation*}
        \pi_{2}(G^{\widehat{n}_{k}+1}(W_{k})) \subset \pi_{2}(W_{k+1}).
    \end{equation*}
\end{lem}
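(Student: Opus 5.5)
Lemma \ref{con_pro3} is the $y_{u}y_{s}$-analogue of Lemma \ref{con_pro1}, and I would follow the same scheme using formula (2) of Lemma \ref{con_psi_phi}. For a point of $W_{k}$ write its $y_{u}$-coordinate as $\widehat{y}^{u}_{k}+y_{u}$ with $|y_{u}|\le \bar{b}_{k}/2$ and $|y_{s}|\le y^{*}_{k}$. The second coordinate of $G^{\widehat{n}_{k}+1}$ is then
\[
\widehat{y}^{u}_{k+1}-a_{1}\lambda_{u}^{2\widehat{n}_{k}}y_{u}^{2}+\mu_{*}\lambda_{cs0}^{\widehat{n}_{k(0)}}\lambda_{cs1}^{\widehat{n}_{k(1)}}y_{s}
\]
(the constant terms $-\mu_{*}+a_{u}$ are absorbed exactly as in Lemma \ref{con_psi_phi}, since the perturbation $\widetilde{t}_{k+1}$ moves the centre onto $\widehat{y}^{u}_{k+1}$). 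It therefore suffices to show that the maximal distance of this quantity from $\widehat{y}^{u}_{k+1}$ is less than $\bar{b}_{k+1}/2$.

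The parabolic term is maximal at $|y_{u}|=\bar{b}_{k}/2$. By \eqref{4_con1} it equals $a_{1}\lambda_{u}^{2\widehat{n}_{k}}\bar{b}_{k}^{2}/4=\bar{b}_{k+1}/4$, which is half of the allowed $\bar{b}_{k+1}/2$. As in \eqref{ev1}, the ratio to be bounded is therefore
\[
\frac12+\frac{\mu_{*}\lambda_{cs0}^{\widehat{n}_{k(0)}}\lambda_{cs1}^{\widehat{n}_{k(1)}}y^{*}_{k}}{\bar{b}_{k+1}/2},
\]
and we need the second term to tend to $0$. Substituting $y^{*}_{k}=20a_{2}^{-1}\bar{b}_{k}^{1/2}$ and $\bar{b}_{k+1}=a_{1}\lambda_{u}^{2\widehat{n}_{k}}\bar{b}_{k}^{2}$, the term becomes
\[
40\mu_{*}a_{2}^{-1}a_{1}^{-1}\lambda_{cs0}^{\widehat{n}_{k(0)}}\lambda_{cs1}^{\widehat{n}_{k(1)}}\lambda_{u}^{-2\widehat{n}_{k}}\bar{b}_{k}^{-3/2}.
\]
Here $\bar{b}_{k}^{-3/2}=a_{1}^{3/2}\lambda_{u}^{\frac32\sum_{i\ge0}\widehat{n}_{k+i}/2^{i}}$. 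By Lemma \ref{lem4_1}, $\frac32\sum_{i}\widehat{n}_{k+i}/2^{i}\le(3+\eta_{1})\widehat{n}_{k}$, with $\eta_{1}\to0$ as $\eta\to0$. The whole expression is then bounded by a constant times
\[
\bigl(\lambda_{cs0}\lambda_{u}^{1+\eta_{1}}\bigr)^{\widehat{n}_{k(0)}}\bigl(\lambda_{cs1}\lambda_{u}^{1+\eta_{1}}\bigr)^{\widehat{n}_{k(1)}}.
\]

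The main obstacle is this last exponential bound. The factor $\lambda_{cs1}\lambda_{u}$ may exceed $1$, and the factor $\lambda_{cs0}\lambda_{u}$ need not be small by itself, so the two factors must be combined. I would first use \eqref{2_con1}, $\lambda_{cs0}\lambda_{cs1}\lambda_{u}^{2}<1$, and choose $\eta$ small so that $\lambda_{cs0}\lambda_{cs1}\lambda_{u}^{2(1+\eta_{1})}<1$. Next I would apply the majority condition \eqref{maj}. Writing $\widehat{n}_{k(0)}=\widehat{n}_{k(1)}+d$ with $d\ge0$, the product becomes
\[
\bigl(\lambda_{cs0}\lambda_{cs1}\lambda_{u}^{2(1+\eta_{1})}\bigr)^{\widehat{n}_{k(1)}}\bigl(\lambda_{cs0}\lambda_{u}^{1+\eta_{1}}\bigr)^{d}.
\]
If $\lambda_{cs0}\lambda_{u}\le1$ (up to $\eta_{1}$), the second factor is at most $1$, and the first tends to $0$ because $\widehat{n}_{k(1)}\to\infty$.

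Otherwise $\lambda_{cs0}\lambda_{u}>1$, and I would need the excess of zeros $d$ to be controlled, for instance by choosing the free words $\underline{u}^{(k)}$ with roughly balanced counts, say $d=o(\widehat{n}_{k})$ while \eqref{maj} still holds. Then the first factor dominates and the product still tends to $0$. This freedom in $\underline{u}^{(k)}$ is the point I would check most carefully against the conditions already imposed. With the product tending to $0$, the ratio drops below $1$ for $k$ larger than some $k_{2}\ge k_{0}$, which gives the inclusion.
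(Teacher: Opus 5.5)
Your reduction is correct and follows the route the paper intends; the paper only says the $y_uy_s$ case is ``the same argument'' as Lemma~\ref{con_pro1}. By \eqref{4_con1} the parabola contributes exactly $\bar b_{k+1}/4$, and you correctly bound the remaining ratio by a constant times $(\lambda_{cs0}\lambda_u^{1+\eta_1})^{\widehat n_{k(0)}}(\lambda_{cs1}\lambda_u^{1+\eta_1})^{\widehat n_{k(1)}}$. The gap is in the last step, and it has two parts.

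First, you did not notice that your ``otherwise'' branch cannot occur. By \eqref{claim_con}, $\lambda_{cs0}\lambda_u^2<2$, and $\lambda_u>\lambda_{cu0}>2$, so $\lambda_{cs0}\lambda_u<2/\lambda_u<1$. The paper records $\lambda_u\lambda_{cs0}<1$ in the proof of Claim~\ref{claim}. This matters because the remedy you propose there, balanced free words with $d=o(\widehat n_k)$, conflicts with what the paper imposes later:
\begin{itemize}
\item the code condition of Section~\ref{sb} takes a fraction $3/4$ or $7/8$ of zeros in $\underline u^{(k)}$, so $d$ is asymptotically a fixed positive proportion of $\widehat n_k$;
\item part (2) of Theorem~\ref{main} uses $\underline u^{(k)}=0^{k^2}$.
\end{itemize}
Second, in the branch you do treat, you assert $\widehat n_{k(1)}\to\infty$ without justification. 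For $\underline u^{(k)}=0^{k^2}$, the ones come only from $\underline\alpha^{(k)}$ and $[\underline\omega^{(k+1)}]^{-1}$, and the construction never estimates how many ones those contain.

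Both points are fixed by using that the second base is strictly less than one. Choose $\eta$ so small that $\theta_1=\lambda_{cs0}\lambda_{cs1}\lambda_u^{2(1+\eta_1)}<1$ (possible by \eqref{2_con1}) and $\theta_2=\lambda_{cs0}\lambda_u^{1+\eta_1}<1$. Set $\theta=\max\{\theta_1^{1/2},\theta_2\}<1$. Write $\widehat n_{k(0)}=\widehat n_{k(1)}+d$, with $d\ge 0$ by \eqref{maj}. Your factorization then gives
\[
\theta_1^{\widehat n_{k(1)}}\,\theta_2^{d}\le \theta^{2\widehat n_{k(1)}+d}=\theta^{\widehat n_k}\longrightarrow 0,
\]
whatever the distribution of symbols. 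The majority condition is still essential, because $\lambda_{cs1}\lambda_u>1$ always holds ($\lambda_{cs1}>1/2$, $\lambda_u>2$). The paper's estimate \eqref{ev} in the $x_ux_s$-plane is also written as a power of $\widehat n_{k(1)}$ alone, so copying it literally inherits the same unjustified divergence. Bounding by $\theta^{\widehat n_k}$ avoids the issue.
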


\begin{lem}\label{con_pro4}
    There exists an integer $k_{2}^{\prime} \geq k_{0}$ such that, for every $k > k_{2}^{\prime}$,
    \begin{equation*}
        \pi_{4}(G^{\widehat{n}_{k}+1}(W_{k})) \subset \pi_{4}(W_{k+1}).
    \end{equation*}
\end{lem}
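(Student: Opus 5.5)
The plan is to argue exactly as in the proof of Lemma \ref{con_pro2}, now in the $y_{u}y_{s}$-plane, using formula (2) of Lemma \ref{con_psi_phi}.

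First, I would read off the fourth coordinate of the image. Take a point of $W_{k}$ and write it as $(\widehat{x}^{u}_{k}+x_{u}, \widehat{y}^{u}_{k}+y_{u}, x_{s}, y_{s})$ with $|y_{u}|\le \bar b_{k}/2$. The second component of (2) in Lemma \ref{con_psi_phi} gives
\begin{equation*}
  \pi_{4}\circ G^{\widehat{n}_{k}+1}(\widehat{x}^{u}_{k}+x_{u}, \widehat{y}^{u}_{k}+y_{u}, x_{s}, y_{s}) = \lambda_{u}^{\widehat{n}_{k}}y_{u}.
\end{equation*}
This value does not depend on $x_{u}$, $x_{s}$ or $y_{s}$; it reflects the fact that on $\mathbb{V}_{*}$ the last coordinate of $F$ is simply $y_{u}$, and the perturbation $h_{\underline{\Delta}(L)}$ only moves the $s$-coordinates before $F$ is applied. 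Hence $\pi_{4}(G^{\widehat{n}_{k}+1}(W_{k}))$ is the symmetric interval $[-\lambda_{u}^{\widehat{n}_{k}}\bar b_{k}/2,\ \lambda_{u}^{\widehat{n}_{k}}\bar b_{k}/2]$. Its extreme values are attained on $\partial_{u}(W_{k})$, that is, at $y_{u}=\pm\bar b_{k}/2$.

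Next, I would compare this interval with $\pi_{4}(W_{k+1})=[-y^{*}_{k+1},y^{*}_{k+1}]$. By \eqref{4_con1} we have $\bar b_{k+1}=a_{1}\lambda_{u}^{2\widehat{n}_{k}}\bar b_{k}^{2}$, so $\bar b_{k+1}^{1/2}=a_{1}^{1/2}\lambda_{u}^{\widehat{n}_{k}}\bar b_{k}$. Combining this with \eqref{4_con2} gives
\begin{equation*}
  y^{*}_{k+1}=20a_{2}^{-1}\bar b_{k+1}^{1/2}=20a_{2}^{-1}a_{1}^{1/2}\lambda_{u}^{\widehat{n}_{k}}\bar b_{k}.
\end{equation*}
The desired inclusion therefore reduces to the $k$-independent inequality $\tfrac12<20a_{1}^{1/2}a_{2}^{-1}$, that is, $a_{2}<40a_{1}^{1/2}$.

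The main obstacle is this last inequality, which does not follow automatically from \eqref{2_par}. If $\delta$ is small, $a_{2}$ may be as large as $2\lambda_{s}/\delta$, while $a_{1}$ is only bounded below. I would resolve this by fixing the parameters compatibly, namely by taking $a_{1}$ large, or $a_{2}$ small, so that $a_{2}^{2}<1600\,a_{1}$. This is consistent with \eqref{2_par} and leaves all the earlier lemmas unaffected.

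With this choice the inclusion holds for every $k\ge k_{0}$, so one may take $k_{2}^{\prime}=k_{0}$. The only other point to check is that the points of $W_{k}$ lie in the domain where Lemma \ref{con_psi_phi} applies, i.e.\ that their $u$-part lies in $\widehat{B}^{u}_{k}$. I would justify this exactly as in Lemmas \ref{con_pro1}–\ref{con_pro3}, using that $\bar b_{k}$ is much smaller than the size of $\widehat{B}^{u}_{k}$ in the $y_{u}$-direction for large $k$.
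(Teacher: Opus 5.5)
Your argument is correct and follows the same approach as the paper, which simply repeats the proof of Lemma \ref{con_pro2} in the $y_{u}y_{s}$-plane: read off $\pi_{4}\circ G^{\widehat{n}_{k}+1}=\lambda_{u}^{\widehat{n}_{k}}y_{u}$ from Lemma \ref{con_psi_phi}(2), then compare $\lambda_{u}^{\widehat{n}_{k}}\bar b_{k}/2$ with $y^{*}_{k+1}$ using \eqref{4_con1}. The restriction $a_{2}<40a_{1}^{1/2}$ you add is a real consequence of the literal normalization in \eqref{4_con2} and is harmless; alternatively, with the normalization $y^{*}_{k}=20a_{1}^{-1/2}\bar b_{k}^{1/2}$ that the paper itself uses in the proof of Theorem \ref{main3}, one gets $y^{*}_{k+1}=20\lambda_{u}^{\widehat{n}_{k}}\bar b_{k}$, and the comparison reduces to $\tfrac12<20$ with no condition on $a_{1},a_{2}$.
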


Using these results, we now proceed to the proof of theorem \ref{main3}.

\begin{proof}[Proof of Theorem \ref{main3}]
    By lemma \ref{con_psi_phi}, \ref{con_pro1}, \ref{con_pro2}, \ref{con_pro3} and \ref{con_pro4}, let $K = \max\{k_{1},k^{\prime}_{1}, k_{2}, k^{\prime}_{2}\}$.
    Then, for every $k \geq K$,
    \begin{align*}
        G(W_{k}) \subset \text{int}(W_{k+1}).
    \end{align*}
    Moreover, from the definitions of $b_{k}$ and $\bar{b}_{k}$, we have
    \begin{align*}
        b_{k} &< (\lambda^{-1}_{cu1})^{2\widehat{n}_{k}} \rightarrow 0 \quad (k \rightarrow \infty), \\
        \bar{b}_{k} &< (1-2\lambda^{-1}_{u})(\lambda^{-1}_{u})^{2\widehat{n}_{k}} \rightarrow 0 \quad (k \rightarrow \infty).
    \end{align*}
    Recall that
    \begin{align*}
        x^{*}_{k} = 20a_{2}b^{1/2}_{k}, \quad y^{*}_{k} = 20a_{1}^{-1/2}\bar{b}^{1/2}_{k}.
    \end{align*}
    Hence, the diameter of $W_{k+1}$ satisfies
    \begin{equation*}
        \lim_{k \rightarrow \infty} \text{diam}(W_{k+1}) = 0.
    \end{equation*}
    This completes the proof of theorem \ref{main3}.
\end{proof}

\section{Statistical behavior}\label{sb}
Let $G$ be the diffeomorphism defined in \eqref{per_g}, and let $D_{k} = \text{Int}\, (W_{k})$ denote the contracting wandering domains of the map $G$ obtained in theorem \ref{main3}.
The code $\underline{\gamma}^{(k)}$ was given by
\begin{equation*}
    \underline{\gamma}^{(k)} = \underline{\alpha}^{(k)} \underline{u}^{(k)} [\underline{\omega}^{(k+1)}]^{-1}
\end{equation*}
Here, the code $\underline{u}^{(k)}$ is the part that can be chosen freely.
To prove (1) of theorem \ref{main}, we suppose the following condition.
\begin{itemize}
    \item \textbf{(Era condition)}
    Consider an increasing sequence of integers $(k_{s})_{s \in \mathbb{N}}$ satisfying the following:
    for every $s \in \mathbb{N}$,
    \begin{eqnarray*}
        \sum_{k=k_{s}}^{k_{s+1}-1}k^{2} > s \sum_{k=k_{1}}^{k_{s}-1}k^{2}.
    \end{eqnarray*}
    \item \textbf{(Code condition)}
    Under the Era condition above, for each integer $k \geq 1$, the code $\underline{u}^{(k)} = u(1)u(2)\dots u(k^{2})$ satisfies the following:
    \begin{itemize}
        \item [(1)] If $k_{s}\leq k < k_{s+1}$ with $s$ odd, then
        \begin{eqnarray*}\label{eq21}
            u_{i} =
            \begin{cases}
                0 \hspace{3mm}(i=1,\dots ,\lfloor 3k^{2}/4 \rfloor), \\
                1 \hspace{3mm}(i=\lfloor 3k^{2}/4 \rfloor +1, \dots ,k^{2}).
            \end{cases}
        \end{eqnarray*}
        That is,
        \begin{equation*}
            \underline{u}^{(k)} = \overbrace{000\dots 0}^{\lfloor 3k^{2}/4 \rfloor}\overbrace{1 \dots 1}^{\lceil k^{2}/4 \rceil}.
        \end{equation*}
        \item [(2)] If $k_{s}\leq k < k_{s+1}$ with $s$ even, then
        \begin{eqnarray*}\label{eq22}
            u_{i} =
            \begin{cases}
                0 \hspace{3mm}(i=1,\dots ,\lfloor 7k^{2}/8 \rfloor), \\
                1 \hspace{3mm}(i=\lfloor 7k^{2}/8 \rfloor +1, \dots ,k^{2}).
            \end{cases}
        \end{eqnarray*}
        That is,
        \begin{equation*}
            \underline{u}^{(k)} = \overbrace{000\dots 0}^{\lfloor 7k^{2}/8 \rfloor}\overbrace{1 \dots 1}^{\lceil k^{2}/8 \rceil}.
        \end{equation*}
    \end{itemize}
\end{itemize}
Under these conditions, one can show that $G$ has historic contracting wandering domain.
For further details, see \cite[Theorem 5.1]{KNS2023}.

To prove (2) of theorem \ref{main}, we set
\begin{eqnarray*}
    \underline{u}^{(k)} = \overbrace{000\dots 00}^{k^{2}}.
\end{eqnarray*}
With this choice, one can show that $G$ has a contracting wandering domain such that the sequence \eqref{eq1} converges to the periodic measure supported in the saddle fixed point $p_{g}$.
See \cite[Theorem 5.5]{KNS2023} for detail calculations.

\subsection*{Acknowledgements}
The author would like to express their gratitude to Masato Tsujii for fruitful discussions.
The author is also grateful to Shin Kiriki, Yushi Nakano and Teruhiko Soma for suggesting the problem and for valuable comments.
The author further thanks Masayuki Asaoka for helpful discussions and insightful comments on the content of this work.
The author was supported by WISE program (MEXT) at Kyushu University.

\subsection*{Use of AI tools}
The author used ChatGPT only for assistance with improving the English phrasing of the manuscript; no mathematical arguments, proofs, or scientific content were generated or influenced by it.
\bibliographystyle{amsalpha} \bibliography{ref}

\bigskip
{\scshape Joint graduate school of mathematics for innovation, Kyushu University, 744 Motooka, Nishi-ku, Fukuoka, 819-0395, Japan} \\
\textit{Email address}: \texttt{yamamoto.kodai.508@s.kyushu-u.ac.jp}
\end{document}